\pdfminorversion=6
\pdfobjcompresslevel=3
\pdfcompresslevel=9

\documentclass[12pt,oneside,openany, article]{memoir}

\nouppercaseheads

\usepackage[amsthm,thmmarks,hyperref]{ntheorem}

\usepackage[noTeX]{mmap}

\usepackage[english.us]{babel}
\usepackage[leqno]{mathtools}
\usepackage{empheq}

\usepackage{caption}

\usepackage[scr]{rsfso}
\usepackage{upgreek}

\usepackage[compress]{cite}
\usepackage{hypernat} 

\usepackage{fix-cm}
\usepackage[cm]{sfmath}
\usepackage{sansmathaccent}

\usepackage{microtype}
\newtheorem{theorem}{Theorem}[chapter]
\newtheorem{proposition}[theorem]{Proposition}

\newtheorem{corollary}[theorem]{Corollary}

\theoremstyle{remark}
\newtheorem{remark}[theorem]{Remark}
\newtheorem{conjecture}[theorem]{Conjecture}

\makeatletter
\newtheoremstyle{plainfoot}%
  {\item[\hskip\labelsep \theorem@headerfont ##1\ ##2\,\footnotemark\theorem@separator]}%
  {\item[\hskip\labelsep \theorem@headerfont ##1\ ##2\ (##3)\, \footnotemark\theorem@separator]}
\makeatother

\theoremstyle{plainfoot}
\theoremseparator{.}
\newtheorem{theorem-foot}[theorem]{Theorem}
\newtheorem{lemma-foot}[theorem]{Lemma}
\newtheorem{proposition-foot}[theorem]{Proposition}

\usepackage{subfig}
\usepackage{wrapfig}
\usepackage{array}

\usepackage{graphicx,xcolor}
\usepackage{eso-pic}
\usepackage[absolute,overlay]{textpos}

\usepackage{pdfpages}

\marginparwidth=2in
\usepackage{todonotes}

\usepackage{xr-hyper}
\usepackage{hyperxmp}
\usepackage{zref, nameref}
\setcounter{secnumdepth}{3}

\usepackage{url}
\usepackage[pdftex,bookmarks,pdfnewwindow,plainpages=false,unicode,pdfencoding=auto]{hyperref}

\usepackage{bookmark}
%

\usepackage[verbose]{placeins}
\usepackage{enumitem}
\usepackage{tikz}
\usepackage{pgfplots}

\usetikzlibrary{arrows, patterns,
decorations.pathreplacing, decorations.pathmorphing, shapes.symbols, spy}

\usepackage{relsize}

\usepackage{amssymb} 

\setsecheadstyle{\large\bfseries\raggedright}
\setsubsecheadstyle{\normalsize\bfseries\raggedright}

\newenvironment{claim}[1][{\textup{(\theequation)}}]{\refstepcounter{equation}\vglue10pt
\begin{trivlist}
\item[{\hskip\labelsep#1}]}{\vglue10pt\end{trivlist}}
\hypersetup{
colorlinks=true,
linkcolor=black,
citecolor=black,
urlcolor=blue,
pdfauthor={Victor Ivrii},
pdfauthortitle={Professor, Department of Mathematics, University of Toronto},
pdftitle={Pointwise Spectral Asymptotics  near Singularity},
pdfsubject={Sharp Spectral Asymptotics},
pdfkeywords={Microlocal Analysis,  Sharp Spectral Asymptotics},
pdfcopyright={Copyright (C) 2019 Victor Ivrii},
pdfcontactemail={ivrii@math.toronto.edu},
pdfcontacturl={http://www.math.toronto.edu/ivrii},
pdfcontactaddress={40 St. George St.},
pdfcontactcity={Toronto, ON},
pdfcontactpostcode={M5S 2E4},
pdfcontactcountry={Canada},
pdflang={en},
baseurl={http://www.math.toronto.edu/ivrii/monsterbook.pdf/},
pdflicenseurl={http://creativecommons.org/licenses/by-nc-nd/3.0/},
bookmarksdepth={4},
pdfpagelayout=SinglePage
}

\usepackage{eso-pic}
\usepackage[absolute,overlay]{textpos}

\definecolor{lightgray}{gray}{.92}
\numberwithin{equation}{chapter}

\newcommand{\sC}{\mathscr{C}}
\newcommand{\sL}{\mathscr{L}}

\newcommand{\cL}{\mathcal{L}}

\newcommand{\cW}{\mathcal{W}}
\newcommand{\cU}{\mathcal{U}}

\newcommand{\bZ}{\mathbb{Z}}
\newcommand{\bR}{\mathbb{R}}

\newcommand{\bS}{\mathbb{S}}

\newcommand{\W}{\mathsf{W}}
\newcommand{\T}{\mathsf{T}}
\newcommand{\w}{\mathsf{w}}

\newcommand{\N}{\mathsf{N}}

\newcommand{\supp}{\operatorname{supp}}

\newcommand{\rank}{\operatorname{rank}}

\newcommand{\x}{\mathsf{x}}
\newcommand{\y}{\mathsf{y}}

\newcommand{\cV}{\mathcal{V}}

\externaldocument[monsterbook-]{monsterbook}[http://www.math.toronto.edu/ivrii/monsterbook.pdf]
\externaldocument[OOD-]{OODiagonal}[https://arxiv.org/pdf/2107.04807.pdf]
\externaldocument[Steklov-]{Steklov}[https://arxiv.org/pdf/1802.07524.pdf]

\title{Pointwise Spectral Asymptotics  near Singularity \thanks{\emph{2010 Mathematics Subject Classification}: 35P20.}\thanks{\emph{Key words and phrases}: Microlocal Analysis, sharp  spectral asymptotics.}}

\author{Victor Ivrii\thanks{This research was supported in part by National Science and Engineering  Research Council (Canada) Discovery Grant  RGPIN 13827}}

\begin{document}

%

\maketitle

\begin{abstract}
 We establish semiclassical asymptotics and estimates for the $e_h(x,x;\tau)$ where
 $e_h(x,y,\tau)$ is the Schwartz kernel of the spectral projector for a second order elliptic operator inside domain with power singularity in the origin. While such asymptotics   for its trace $\N_h(\tau)= \int e_h(x,x,\tau)\,dx$ are well-known, the poinwise  asymptotics are much less explored.

Our main tools: microlocal methods, improved successive approximations and geometric optics methods.

\end{abstract}

\chapter{Introduction}
\label{sect-1}

In this paper we consider semiclassical diagonal poinwise asymptotics of $e_h(x,x,\tau)$ where  $e_h(x,y,\tau)$  is the Schartz kernel of the spectral projector for a \emph{toy-model operator}
\begin{gather}
A^0 _h = -\frac{1}{2} h^2\Delta - |x|^{-2\alpha} \qquad\text{with\ \ } 0<\alpha<1
\label{eqn-1.1}\\
\intertext{or a \emph{perturbed toy-model operator}}
 A_h = \sum_{j,k} g^{jk} (hD_j-V_j)(hD_k-V_k) +V,
 \label{eqn-1.2}
 \end{gather}
assuming that
\begin{align}
&D^\sigma ( g^{jk} - \updelta_{jk} )= O(|x|^{\beta-|\sigma|}),
\label{eqn-1.3}\\
&D^\sigma V_j(x)= O(|x|^{-\alpha +\beta-|\sigma|})
\label{eqn-1.4}\\
 \shortintertext{and}
 &D^\sigma(V(x)+|x|^{-2\alpha}) =O(|x|^{-2\alpha+\beta-|\sigma|})\qquad \forall \sigma\colon |\sigma|\le K 
 \label{eqn-1.5}
 \end{align}
 with $\beta>0$ and sufficiently large $K$.
 
 \section{Toy-model operator}
\label{sect-1.1}

Let $A_h\coloneqq A^0_h $ be toy-model operator (\ref{eqn-1.1}) in $\bR^d$. It is self-adjoint in $\sL^2(\bR^d))$. Let $E^0_h(\tau)$ be its spectral projector and $e^0_h(x,y,\tau)$ be a Schwartz kernel of $E^0_h(\tau)$.

\begin{theorem}\label{thm-1.1}
Consider toy-model operator $A^0_h$. Then 
 \begin{enumerate}[wide, label=(\roman*), labelindent=0pt]
\item\label{thm-1.1-i}
For $h\in (0,1]$, $|x|\le 1$, $|y|\le 1$, $T\ge  C_0$   the following estimate holds
\begin{gather}
|e _h(x,y, 0) -e^{\T}_{T,h}(x,y,0)|\le Ch^s T^{-s}
\label{eqn-1.6}\\
\intertext{with the \underline{Tauberian expression}}
e^\T_{T,h}(x,y,\tau) =h^{-1}  \int_{-\infty} ^\tau F_{t\to h^{-1}}\Bigl( \bar{\chi}_{T} u_h(x,y,\tau)\Bigr)\,d\tau  
\label{eqn-1.7}
\end{gather}
where here and below $u_h(x,y,t)$ is the Schwartz kernel of $e^{ith^{-1}A_h}$, \\ $\bar{\chi}\in \sC_0^([-1,1])$, $\chi ^-\in \sC_0([-1,-\frac{1}{2}])$, $\chi ^+\in \sC_0([\frac{1}{2},1])$, $\chi=\chi^++\chi^-$, $\chi_T(t)=\chi(t/T)$ etc. Here $K=K(s)$ in \textup{(\ref{eqn-1.3})}--\textup{(\ref{eqn-1.5})}.
\item\label{thm-1.1-ii}
For $x\colon |x|=1$
 \begin{gather}
|e^\T_{T,h}  (x,x,0) -e^\T_{T',h}(x,x,0)|\le 
C h^{\frac{3}{2}-d} + C\bar{\rho} ^{d-1}h^{1-d} 
\label{eqn-1.8} 
\shortintertext{with}
\bar{\rho}= \left\{\begin{aligned}
& h^{\frac{1}{2}-\delta} &&\text{as\ \ } (1-\alpha)^{-1}\notin 2\bZ, \\
& h^{\frac{1}{3}-\delta} &&\text{as\ \ } (1-\alpha)^{-1}\in 2\bZ.
\end{aligned}\right.
\label{eqn-1.9} 
\end{gather}
with an arbitrary small exponent $\delta>0$  and an arbitrarily small constant $T'>0$. 
\item\label{thm-1.1-iii}
Furthermore, as $0<\alpha \le \frac{1}{2}$ one can skip the first term in the right-hand expression of \textup{(\ref{eqn-1.8})}.\end{enumerate}
 \end{theorem}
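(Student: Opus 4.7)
My plan is to handle part (i) by a standard semiclassical Tauberian argument and treat (ii)--(iii) by a dyadic-in-time decomposition combined with a geometric-optics analysis of the returning orbits of the Kepler-type Hamiltonian. For (i) I would write
\[
e_h(x,y,0)-e^\T_{T,h}(x,y,0)=h^{-1}\int_{-\infty}^0 F_{t\to h^{-1}\tau'}\bigl[(1-\bar{\chi}_T(t))\,u_h(x,y,t)\bigr]\,d\tau',
\]
and exploit $(hD_t)^N u_h = A_h^N u_h$ together with the spectral-window boundedness of $A_h^N$ on a neighbourhood of $\{\tau=0\}$ (which follows from the finite lower spectral bound of $A_h^0$ on $|x|\le 1$ when $\alpha<1$). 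Integrating by parts $s$ times in $t$ transfers the $hD_t$'s onto $(1-\bar{\chi}_T(t))$; each derivative contributes a factor $T^{-1}$ and is supported in $|t|\asymp T$, yielding $Ch^sT^{-s}$ uniformly for $|x|,|y|\le 1$ and fixing the required regularity parameter $K=K(s)$ in hypotheses \textup{(\ref{eqn-1.3})}--\textup{(\ref{eqn-1.5})}.

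For (ii), the same Tauberian identity with $(\bar{\chi}_T-\bar{\chi}_{T'})$ in place of $(1-\bar{\chi}_T)$ localizes the integrand to $|t|\in[T'/2,T]$. I would decompose this range dyadically, $|t|\sim s$; on each window the exact homogeneity of the toy Hamiltonian $\tfrac12|\xi|^2-|x|^{-2\alpha}$ identifies a Kepler-type return map whose closed-orbit periods form a discrete, $\alpha$-dependent family. On windows avoiding any return time, stationary phase in the angular launch direction yields $O(h^\infty)$; on the finitely many windows around a genuine return time, a geometric-optics parametrix for $u_h(x,x,t)$ contributes $Ch^{1-d}\bar\rho^{d-1}$, where $\bar\rho$ is the width in angular launch directions for which the parametrix phase is stationary to within $h$.

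The scale $\bar\rho$ is governed by the order of vanishing of the Hessian of the phase at the critical launch direction. A direct computation on the explicit closed orbits of the toy Hamiltonian shows that this Hessian is nondegenerate precisely when $(1-\alpha)^{-1}\notin 2\bZ$, yielding $\bar\rho\asymp h^{1/2-\delta}$; when $(1-\alpha)^{-1}\in 2\bZ$ an additional rotational symmetry of the closing orbits degenerates one transverse eigenvalue, and an Airy-type representation raises $\bar\rho$ to $h^{1/3-\delta}$, the $\delta$ absorbing the logarithmic loss from the $O(1)$ dyadic sum in $s$. The extra term $Ch^{3/2-d}$ in \textup{(\ref{eqn-1.8})} originates from a neighbourhood of $0$ of size $\sim h^{1/(1-\alpha)}$ where the geometric-optics construction fails and must be replaced by a rescaled singular parametrix; for $0<\alpha\le\tfrac12$ the potential is mild enough that an improved successive-approximations construction handles this zone directly, without producing the $h^{3/2-d}$ contribution, which establishes (iii). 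I expect the main technical obstacle to be the uniform matching of the geometric-optics and rescaled-singular parametrices across the transition zone $|x|\sim h^{1/(1-\alpha)}$, together with the Airy analysis at the degenerate stationary point in the $(1-\alpha)^{-1}\in 2\bZ$ regime, while keeping the cumulative loss across the dyadic-$s$ summation to only $h^{-\delta}$.
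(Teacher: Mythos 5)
Your proof proposal departs from the paper in several respects, and a few of the departures are genuine gaps rather than alternative routes.

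For part (i), the mechanism you invoke is wrong. Integrating by parts in $t$ transfers $hD_t$ onto the exponential $e^{-ih^{-1}\tau' t}$, producing factors of $\tau'$; since the Tauberian integral is evaluated at $\tau'\to 0$ (indeed one integrates $\tau'$ up to $0$), these factors are not small. And the Leibniz rule leaves a term $(1-\bar\chi_T)A_h u_h$ which has no smallness at all. What the paper actually does (Proposition~\ref{prop-2.2}) is a \emph{propagation of singularities} argument: the Hamiltonian flow on $\{a=0\}$ escapes any ball $|x|\le C_0 T^{1/(1+\alpha)}$ in time $\lesssim T$, so for $T\ge C_0$ and $|x|,|y|\le 1$ the distribution $F_{t\to h^{-1}\tau}(\chi_T u_h)$ is $O(h^s)$. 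Combined with the standard Tauberian lemma (estimate (\ref{eqn-2.22}) with small $T$, giving the H\"older bound (\ref{eqn-2.23})), this yields (\ref{eqn-1.6}). Without propagation, the escape-to-infinity structure that makes the estimate uniform in $T\ge C_0$ does not appear in your argument.

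For parts (ii) and (iii), you have the two error terms swapped. In the paper, $Ch^{3/2-d}$ arises from the \emph{Kepler-type self-intersecting loops} in the zone $\{|x\times\xi|\gtrsim\epsilon\}$ \emph{away from the origin}: Proposition~\ref{prop-2.4}\ref{prop-2.4-i} gives this bound because the map $(\xi,t)\mapsto\pi_x\Phi_t(\bar x,\xi)$ has rank exactly $2$ (not $d$) at the return points with $\bar x\times\bar y=0$ (claim (\ref{eqn-2.30})). The term $C\bar\rho^{d-1}h^{1-d}$ is the trivial volume bound on the zone $\{|x\times\xi|\le\bar\rho\}$ of nearly radial launch directions that \emph{do} dive into a $\rho$-scale neighbourhood of the origin (Proposition~\ref{prop-2.12}), where the microlocal separation of returning trajectories can no longer be resolved. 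Your attribution of $h^{3/2-d}$ to the $|x|\sim h^{1/(1-\alpha)}$ matching region is therefore backwards; for $|x|=1$ the loops never come close to the origin at all. Likewise $\bar\rho$ is not the stationary-phase width of a parametrix phase: it is the threshold below which the scaled uncertainty $\hbar^{1/2-\delta'}$ (with $\hbar=h/\rho$) exceeds the separation between the two branches of a near-radial returning trajectory. Proposition~\ref{prop-2.10} shows this separation is $\asymp 1$ when $(1-\alpha)^{-1}\notin 2\bZ$ (rotation angle $\ne 2\pi n$) and $\asymp\rho$ when $(1-\alpha)^{-1}\in 2\bZ$ (nearly closed orbit), giving the two thresholds $h^{1/2-\delta}$ and $h^{1/3-\delta}$. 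Finally, part (iii) is not a successive-approximations statement near the origin; it is simply that for $0<\alpha\le\frac12$ there are no self-intersecting trajectories (claim~(\ref{eqn-2.4}), Remark~\ref{rem-2.5}\ref{rem-2.5-i}), so the loop contribution $Ch^{3/2-d}$ never appears. Successive approximations enter only in the perturbed operator of Section~\ref{sect-3}, not in the toy-model Theorem~\ref{thm-1.1}.
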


\begin{remark}\label{rem-1.2}
We know that for small enough constant $T_*>0$
\begin{gather}
e^\T_{T_*,h}(x,x,\tau)=e^\W_{h}(x,x,\tau) + O(h^{2-d})
\label{eqn-1.10}\\
\shortintertext{with the \underline{Weyl expression}}
e^\W_{h}(x,x,\tau) = (2\pi h)^{-d} \varpi_d (2(\tau -  V(x)))^{\frac{d}{2}}
\label{eqn-1.11}
\end{gather}
with $\varpi_d$ a volume of $B(0,1)\subset\bR^d$.
\end{remark}

\section{General case}
\label{sect-1.2}

Consider  general operator (\ref{eqn-1.2}) in $\sL^2(X)$ with $0\in X\subset \bR^d$. 
If $\partial X\ne \emptyset$, assume that there are boundary conditions on $\partial X$.  

\begin{theorem}\label{thm-1.3}
Consider  general operator \textup{(\ref{eqn-1.2})} in $\sL^2(X)$ with $0\in X\subset \bR^d$. Assume that
conditions \textup{(\ref{eqn-1.3})}--\textup{(\ref{eqn-1.5})} are fulfilled in $B(0,2)$ and that
\begin{claim}
$A_h$ is a self-adjoint operator.
\label{eqn-1.12}
\end{claim}
Then

 \begin{enumerate}[wide, label=(\roman*), labelindent=0pt]
\item\label{thm-1.3-i}
For $h\in (0,1]$, $\bar{r}\le r=|x|\le r_0$, $\bar{r}\le r=|y|\le r_0$, $T\ge  C_0r_0^{1+\alpha}$ where $r_0$ is a small constant and $ \bar{r}=h^{1/ (1-\alpha)}$,  the following estimates hold
\begin{gather}
 |e _h(x,y, 0) -e^{\T}_{T,h}(x,y,0)|\le Ch^{1-d}r^{2\alpha-\alpha d}  
\label{eqn-1.13}
\end{gather}
and
\begin{multline}
|e _h(x,y, 0)-e^0_h(x,x,0) -e^{\T}_{T,h}(x,y,0)+ e^{0\,\T}_{T,h}(x,y,0)|\\
\le Ch^{1-d}r^{2\alpha-\alpha d+\beta}  
\label{eqn-1.14}
 \end{multline}
with the  Tauberian expression  $e^\T_{T,h}(x,y,\tau)$ defined by \textup{(\ref{eqn-1.7})}.
 \item\label{thm-1.3-ii}
 For $x\colon r_*= h^{1/(1-\alpha) -\delta}\le r=|x|\le r_0$
 \begin{multline}
|e^\T _{T,h}(x,x,0) -e^\T_{T',h}(x,x,0)|\\[3pt]
\le 
 C h^{\frac{3}{2}-d} r^{-\frac{3}{2}(1-\alpha) -\alpha d} +  C\bar{\rho} ^{d-1}h^{1-d} r^{-1+\alpha -\alpha d} 
 + C h^{1-d} r^{-1+\alpha -\alpha d +\beta d-\beta}
\label{eqn-1.15} 
\end{multline}
with 
\begin{gather}\bar{\rho}= \left\{\begin{aligned}
& \hbar^{\frac{1}{2}-\delta} &&\text{as\ \ } (1-\alpha)^{-1}\notin 2\bZ, \\
& \hbar^{\frac{1}{3}-\delta} &&\text{as\ \ } (1-\alpha)^{-1}\in 2\bZ,
\end{aligned}\right. \qquad \hbar= hr^{-1+\alpha}
\label{eqn-1.16} 
\end{gather}
with an arbitrary small exponent $\delta>0$, $K=K(s,\delta)$ in \textup{(\ref{eqn-1.3})}--\textup{(\ref{eqn-1.5})}  and 
$T'= \epsilon 'r^{1+\alpha}$ with an arbitrarily small constant $\epsilon '>0$.

 \item\label{thm-1.3-iii}
 Further, for $r_*\le r \le r^*=h^{1/(1-\alpha+\beta)}$
\begin{multline}
|e^\T _{T,h}(x,x,0) -e^\T_{T',h}(x,x,0)- e^{0\,\T} _{T,h}(x,x,0) + e^{0\,\T}_{T',h}(x,x,0)|\\[3pt]
\le 
 C h^{\frac{1}{2}-d} r^{-\frac{1}{2}(1-\alpha) -\alpha d +\beta} +  C\bar{\rho} ^{d-1}h^{-d} r^{\beta -\alpha d} 
 + C h^{-d} r^{ -\alpha d +\beta d}
\label{eqn-1.17} 
\end{multline}
 
\item\label{thm-1.3-iv}
Furthermore, as $0<\alpha \le \frac{1}{2}$ one can skip the first terms in the right-hand expressions of \textup{(\ref{eqn-1.15})} and \textup{(\ref{eqn-1.17})}.
\end{enumerate}
 \end{theorem}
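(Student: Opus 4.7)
The plan is to reduce Theorem~\ref{thm-1.3} to Theorem~\ref{thm-1.1} via an isotropic rescaling at the working scale $r=|x|$, combined with successive approximations treating $A_h$ as a perturbation of $A_h^0$. After localising to a dyadic annulus $|x|\asymp r$ by a partition of unity, set $y=x/r$; then $A_h^0$ becomes $r^{-2\alpha}\tilde A_\hbar^0$ with $\tilde A_\hbar^0=-\tfrac12\hbar^2\Delta_y-|y|^{-2\alpha}$ the unit-scale toy model and $\hbar=hr^{-1+\alpha}$ the effective semiclassical parameter of \textup{(\ref{eqn-1.16})}. The full operator $A_h$ becomes $r^{-2\alpha}(\tilde A_\hbar^0+\tilde B_\hbar)$, where $\tilde B_\hbar$ satisfies the rescaled analogues of \textup{(\ref{eqn-1.3})}--\textup{(\ref{eqn-1.5})} on the unit annulus with overall size $r^\beta$. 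Since $\tau=0$ is preserved, the kernels satisfy $e_h(x,y,0)=r^{-d}\tilde e_\hbar(x/r,y/r,0)$ and similarly for $e_h^0$; the rescaled Tauberian time $\tilde T\ge C_0$ corresponds to $T\ge C_0 r^{1+\alpha}$ in the original variables, exactly the hypothesis of part~\ref{thm-1.3-i}.

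With the rescaling fixed, part~\ref{thm-1.3-i} follows by standard microlocal propagation: at scale $|x|\asymp r$ the characteristic momentum is $|\xi|\asymp r^{-\alpha}$, so finite-speed propagation permits taking the Tauberian time up to order $r^{1+\alpha}$. The standard Tauberian/microlocal remainder of order $\hbar^{1-d}$ for the rescaled problem, unscaled by the factor $r^{-d}$, gives \textup{(\ref{eqn-1.13})}; the sharper bound \textup{(\ref{eqn-1.14})} is obtained by a Duhamel expansion $e^{ith^{-1}A_h}-e^{ith^{-1}A_h^0}=O_{\textup{op}}(r^\beta)$, which gains precisely the factor $r^\beta$ when one subtracts the toy-model kernel. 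Parts~\ref{thm-1.3-ii} and~\ref{thm-1.3-iii} reduce to applying Theorem~\ref{thm-1.1}\ref{thm-1.1-ii} to $\tilde A_\hbar^0$ at the unit scale and then unscaling: $\hbar^{3/2-d}\to r^{-d}\hbar^{3/2-d}$ and $\tilde\rho^{d-1}\hbar^{1-d}\to r^{-d}\tilde\rho^{d-1}\hbar^{1-d}$ produce the first two summands of \textup{(\ref{eqn-1.15})}, and the dichotomy in \textup{(\ref{eqn-1.16})} tracks the dichotomy in \textup{(\ref{eqn-1.9})} verbatim. The third summand of \textup{(\ref{eqn-1.15})} arises when one transfers the bound from $\tilde A_\hbar^0$ back to $\tilde A_\hbar$ via \textup{(\ref{eqn-1.14})}. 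The ``doubly-corrected'' estimate \textup{(\ref{eqn-1.17})} is obtained by carrying out one additional step of successive approximation on the $\beta$-corrected Tauberian expression. Finally, part~\ref{thm-1.3-iv} is immediate from Theorem~\ref{thm-1.1}\ref{thm-1.1-iii}, since the first summands of \textup{(\ref{eqn-1.15})} and \textup{(\ref{eqn-1.17})} originate from the first summand of \textup{(\ref{eqn-1.8})} under the same rescaling.

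The principal technical obstacles are threefold. First, the propagation arguments must be justified up to the critical time $T\asymp r^{1+\alpha}$ uniformly down to the inner boundary $r=r_*=h^{1/(1-\alpha)-\delta}$, where $\hbar\to h^\delta$ and semiclassical arguments are weakest; this is where the loss $\delta>0$ in \textup{(\ref{eqn-1.16})} enters. Second, the successive approximations must be run on each dyadic annulus while respecting the $K$-derivative bounds in \textup{(\ref{eqn-1.3})}--\textup{(\ref{eqn-1.5})}, with $K$ chosen depending on both $s$ and $\delta$. Third, and most delicate, the geometric-optics part of Theorem~\ref{thm-1.1}\ref{thm-1.1-ii}—sensitive to periodicity of the classical flow for $\tilde A_\hbar^0$, whence the dichotomy $(1-\alpha)^{-1}\in 2\bZ$—must be invoked with the effective parameter $\hbar$ rather than $h$, and the resulting $\bar\rho$ carefully re-expressed in terms of the physical $r$ via \textup{(\ref{eqn-1.16})}.
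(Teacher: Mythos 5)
The rescaling framework you set up is correct and matches the paper's strategy: one works on a dyadic annulus $|x|\asymp r$, scales to unit size with effective semiclassical parameter $\hbar=hr^{-1+\alpha}$, and treats $A_h$ as a size-$\varepsilon=C_0r^\beta$ perturbation of the toy model. Your derivation of part~\ref{thm-1.3-i} and your unscaling of the first two summands of \textup{(\ref{eqn-1.15})} from \textup{(\ref{eqn-1.8})} are also fine. However, there is a genuine gap in your account of parts~\ref{thm-1.3-ii} and~\ref{thm-1.3-iii}.

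You claim that parts~\ref{thm-1.3-ii} and~\ref{thm-1.3-iii} ``reduce to applying Theorem~\ref{thm-1.1}\ref{thm-1.1-ii} to $\tilde A^0_\hbar$ at the unit scale'' and that the third summand of \textup{(\ref{eqn-1.15})} ``arises when one transfers the bound from $\tilde A^0_\hbar$ back to $\tilde A_\hbar$ via \textup{(\ref{eqn-1.14})}.'' This cannot be right for two reasons. First, the transfer via \textup{(\ref{eqn-1.14})} would contribute a term proportional to $r^\beta$, whereas the third summand of \textup{(\ref{eqn-1.15})} is proportional to $r^{\beta(d-1)}$ (it equals $\varepsilon^{d-1}\hbar^{1-d}$ at unit scale before unscaling by $r^{-d}$); these differ as soon as $d\ge 3$. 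Second, any Duhamel/successive-approximation transfer from $\tilde A^0_\hbar$ to $\tilde A_\hbar$ requires the perturbation to be semiclassically small, i.e.\ $\varepsilon\le\hbar$, which is exactly $r\le r^*=h^{1/(1-\alpha+\beta)}$; but part~\ref{thm-1.3-ii} holds on the entire range $r_*\le r\le r_0$, most of which has $\varepsilon\gg\hbar$ and therefore lies outside the reach of such a transfer. The paper instead \emph{re-derives} the propagation and geometric-optics estimates directly for the perturbed operator (Propositions~\ref{prop-3.4}--\ref{prop-3.10}). The decisive new fact is \textup{(\ref{eqn-3.6})}: the angular momentum $|x\times\xi|$, which is exactly conserved for $A^0_h$, obeys only $|\{a,\,x_j\xi_k-x_k\xi_j\}|\le C_0\varepsilon|x|^{-2\alpha}$ for $A_h$. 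Consequently the propagation confinement to angular-momentum strips degrades by $O(\varepsilon)$, and the cutoff $\bar\rho$ must be enlarged to $\bar\rho_\varepsilon=\bar\rho+C\varepsilon$; the corresponding ``bad'' zone then contributes $\bar\rho_\varepsilon^{\,d-1}\hbar^{1-d}\asymp\bar\rho^{d-1}\hbar^{1-d}+\varepsilon^{d-1}\hbar^{1-d}$ (cf.\ Proposition~\ref{prop-3.10}, estimate~\textup{(\ref{eqn-3.13})}), which after unscaling produces \emph{both} the second and third summands of \textup{(\ref{eqn-1.15})}. Your successive-approximation picture \emph{is} the mechanism behind part~\ref{thm-1.3-iii} (Propositions~\ref{prop-3.14}--\ref{prop-3.15}, Corollary~\ref{cor-3.16}), and it is precisely the condition $\varepsilon\le\hbar$ there that forces the hypothesis $r\le r^*$; you should make that restriction explicit rather than leaving it implicit.
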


\begin{remark}\label{rem-1.4}
 \begin{enumerate}[wide, label=(\roman*), labelindent=0pt]
\item\label{rem-1.4-i}
We know that in the framework of Theorem~\ref{thm-1.3}\ref{thm-1.3-ii}
\begin{gather}
|e^\T_{T',h}(x,x,0)- e^\W_{h}(x,x,0))|   
\le Ch^{2-d} r^{-2+2\alpha -\alpha d} + C h^{1-d} r^{-\alpha d+\alpha-1+\beta}.
\label{eqn-1.18}
\end{gather}
and in the framework of Theorem~\ref{thm-1.3}\ref{thm-1.3-ii}
\begin{multline}
|e^\T_{T',h}(x,x,0)- e^\W_{h}(x,x,0) - e^{0\,\T}_{T',h}(x,x,0) + e^{0\,\W}_{h}(x,x,0)|  \\[3pt]
\le Ch^{2-d} r^{-2+2\alpha +\beta-\alpha d}.
\label{eqn-1.19}
\end{multline}
\item\label{rem-1.4-ii}
Estimate (\ref{eqn-1.15}) without the last term is estimate (\ref{eqn-1.8}) scaled.

\item\label{rem-1.4-iii}
Combining estimates (\ref{eqn-1.13}), (\ref{eqn-1.15}) and (\ref{eqn-1.18}) we can estimate 
\begin{gather*}
e_h(x,x,0)-e^\W_h (x,x,0)
\end{gather*}
and combining estimates (\ref{eqn-1.13}),  (\ref{eqn-1.17})
and  (\ref{eqn-1.19}) we can estimate 
\begin{gather*}
e_h(x,x,0)-e^\W_h (x,x,0) - e^0_h(x,x,0) + e^{0\,\W}_h (x,x,0).
\end{gather*}
\end{enumerate}
\end{remark}

\section{Plan of the paper.}
\label{sect-1.3}
In Section~\ref{sect-2} we study a toy-model operator $A^0_h$. First, in Subsection~\ref{sect-2.1} we study corresponding Hamiltonian trajectories. In Subsection~\ref{sect-2.2} we study propagation of singularities, especially long-range propagation and propagation near $\{(x,\xi)\colon |x\times \xi|=0\}$, and derive microlocal and local pointwise spectral asymptotics with the improved remainder estimates. Finally, in Subsection~\ref{sect-2.3} we scale the previous results.

In Section~\ref{sect-3} we study operator $A_h$ which is perturbed a toy-model operator $A^0_h$. In Subsection~\ref{sect-3.1} we study propagation of singularities, especially long-range propagation and propagation near $\{(x,\xi)\colon |x\times \xi|=0\}$, and derive microlocal and local pointwise spectral asymptotics with the improved remainder estimates.  In Subsection~\ref{sect-3.2}  by the metod of successive approximations the difference $e_h(x,x,\tau)-e^0_h(x,x,\tau)$. Finally, in Subsection~\ref{sect-3.3} we consider cases
$|x|\le \bar{r}=h^{1/(1-\alpha)}$ and $\bar{r}\le |x|\le r_*=h^{1/(1-\alpha)-\delta}$.

In the next paper (or the next variant of this paper)  we consider out of diagonal asymptotics for such operators.

\chapter{Toy-model operator}
\label{sect-2}

\section{Hamiltonian trajectories}
\label{sect-2.1}

\paragraph{Geometry: $\mathbf{d=2}$.}
As $d=2$ trajectories on the energy level $0$ are defined in the polar coordinates by
\begin{gather}
\frac{d r}{dt} = \pm \sqrt{2 r^{-2\alpha} - M^2r^{-2}},
\label{eqn-2.1}\\
\frac{d\theta}{dt}=Mr^{-2}
\label{eqn-2.2}
\end{gather}
with \emph{angular momentum} $M$.  Then
\begin{gather}
d\theta = \pm \frac{Mr^{-2}\,dr}{\sqrt{2r^{-2\alpha}-M^2r^{-2}}}=
\pm \frac{M\rho^{-2}\, d\rho }{\omega \sqrt{2\rho^{-1} -M^2\rho^{-2}}},
\label{eqn-2.3}
\end{gather}
with $\rho =r^{\omega}$ with $\omega=2-2\alpha$, $0<\omega <2$.  The latter describes Kepler's parabolas as $M\ne 0$ in $(\theta,\rho)$ coordinates and therefore we need to perform a conformal transform $\rho \mapsto r=\rho^{1/\omega}$, $\theta\mapsto \omega^{-1}\theta$. Since increment of $\theta$ along parabola is $\pm 2\pi$, increment of $\theta$ along original Hamiltonian trajectory is $2\pi \omega^{-1}$ and therefore 
\begin{claim}\label{eqn-2.4}
Every Hamiltonian trajectory with $M\ne 0$ has
$n\coloneqq -1-\lceil -\omega ^{-1}\rceil =  -1-\lceil -(2-2\alpha )^{-1}\rceil$ self intersections. In particular, the number of self-intersections is $0$ for $0<\alpha\le \frac{1}{2}$,
$1$ for $\frac{1}{2}<  \alpha \le  \frac{3}{4}$, etc.
\end{claim}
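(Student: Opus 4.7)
The plan is to exploit the Kepler structure built into equation~\textup{(\ref{eqn-2.3})}. Once the substitution $\rho=r^{\omega}$ is introduced, the angular ODE is, up to the constant factor $1/\omega$, exactly the equation of a zero-energy orbit for the planar Kepler problem with potential $-1/\rho$ and angular momentum $M$; that orbit is a parabola with focus at the origin. The conformal map $(\rho,\theta)\mapsto(r=\rho^{1/\omega},\,\tilde\theta=\omega^{-1}\theta)$ therefore carries this parabola back to the actual Hamiltonian trajectory in the original polar coordinates $(r,\tilde\theta)$, and I would use this correspondence to transfer geometric information from the parabola to the trajectory.

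Two such pieces of information suffice. First, a parabola in standard polar coordinates centred at the focus has its two asymptotic rays meeting at $\theta=\pm\pi$ measured from perihelion, so the total $\theta$-increment along it is exactly $2\pi$; after the rescaling $\tilde\theta=\omega^{-1}\theta$ this becomes $2\pi/\omega$, as announced just before the claim. Second, along the parabola $\rho$ is a symmetric, strictly unimodal function of $\theta-\theta_*$, so along the trajectory $r$ is a symmetric, strictly unimodal function of $\tilde\theta-\tilde\theta_*$, minimised at perihelion and tending to $+\infty$ at both ends of the parameter interval. By this symmetry and monotonicity, an equality $r(\tilde\theta_1)=r(\tilde\theta_2)$ with $\tilde\theta_1<\tilde\theta_2$ forces $\tilde\theta_{1,2}=\tilde\theta_*\mp a$ for some $a\in(0,\pi/\omega)$, and the two points coincide in the plane iff $2a$ is a positive multiple of $2\pi$, i.e.\ $a=\pi m$ with $m\ge 1$. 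Counting integers $m\ge 1$ with $m<\omega^{-1}$ recovers the formula $n=-1-\lceil -\omega^{-1}\rceil$; in the boundary case $\omega^{-1}\in\bZ$ the extreme candidate $a=\pi/\omega$ corresponds to the two asymptotic ends, which meet only at infinity and so contribute no genuine crossing.

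The main obstacle is really only the first step: identifying the conformal equivalence with the Kepler parabola and checking that the substitution $\rho=r^{\omega}$ actually reduces \textup{(\ref{eqn-2.1})}--\textup{(\ref{eqn-2.2})} on the zero-energy level to the advertised Kepler form with the correct prefactor $1/\omega$. After that, both the total angular increment $2\pi/\omega$ and the self-intersection count are immediate consequences of the symmetry of the parabola together with the elementary identity $-\lceil -x\rceil=\lfloor x\rfloor$ applied to $x=\omega^{-1}$.
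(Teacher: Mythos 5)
Your route is essentially the paper's: the paper itself only asserts, in the sentence preceding the claim, that the $\rho=r^{\omega}$ substitution turns the orbit into a Kepler parabola and that the $2\pi$ angular sweep of the parabola becomes $2\pi\omega^{-1}$ after the conformal rescaling; it leaves the counting implicit. Your elaboration via the strict unimodality and symmetry of $r$ about perihelion, so that a crossing occurs exactly when $2a\in 2\pi\bZ_{>0}$ with $a\in(0,\pi/\omega)$, is the correct way to turn that one-line remark into a proof.

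There is, however, an arithmetic slip in your last step that you should not paper over. The number of integers $m$ with $1\le m<\omega^{-1}$ is $\lceil\omega^{-1}\rceil-1$, not $\lfloor\omega^{-1}\rfloor-1$. Since the printed formula $n=-1-\lceil-\omega^{-1}\rceil$ equals $\lfloor\omega^{-1}\rfloor-1$ by the identity $-\lceil-x\rceil=\lfloor x\rfloor$ you invoke, your appeal to that identity does not actually reconcile your count with the printed formula: the two disagree by one whenever $\omega^{-1}\notin\bZ$. For instance, $\alpha=5/8$ gives $\omega^{-1}=4/3$, your count gives $n=1$, and $-1-\lceil-4/3\rceil=0$. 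Your geometric count is the one consistent with the examples listed in the claim ($n=0$ for $0<\alpha\le\tfrac12$, $n=1$ for $\tfrac12<\alpha\le\tfrac34$), so the printed expression evidently should read $n=\lceil\omega^{-1}\rceil-1$ (equivalently $-1+\lceil\omega^{-1}\rceil$). State your count as $\lceil\omega^{-1}\rceil-1$ and note the discrepancy rather than claiming to recover the printed formula; the remark about the endpoints $a=\pi/\omega$ meeting only at infinity is fine, but it only concerns the boundary case $\omega^{-1}\in\bZ$, where the two expressions already agree, so it cannot absorb the off-by-one.
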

All these trajectories are similar and obtained from one with $M=1$ by scaling and rotation. Because of this
\begin{claim}\label{eqn-2.5}
Let $\frac{1}{2}<\alpha <1$. Then through every point $x\ne 0$ there pass exactly $n$ Hamiltonian trajectories with $M=M^+_k(x)=\mu_k|x|^{2-2\alpha}$, $0<\mu_1<\ldots <\mu_n$, returning to
$x$ once after $k$ winding around $0$ in the counter-clockwise  direction, and also $n$ Hamiltonian trajectories with $M=M^-_k(x)=-\mu_k|x|^{1-\alpha}$, returning to
$x$ once after $k$ winding around $0$ in the clockwise  direction.
\end{claim}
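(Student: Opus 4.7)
The plan is to deduce (2.5) directly from (2.4) together with the rotational symmetry of the toy-model Hamiltonian (and the $M\mapsto -M$ symmetry that reverses orientation). By (2.4), every trajectory with $M\neq 0$ has exactly $n$ self-intersection points, so it suffices to enumerate the self-intersections of one fixed Kepler-parabola trajectory and read off the values of $|M|$ for which its $k$-th self-intersection lies on the circle $|x'|=|x|$; the trajectory can then be rotated to place that self-intersection at the prescribed point $x$.

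First I would integrate (2.3) to realise the trajectory in the transformed coordinates $(\tilde\theta,\rho)=(\omega\theta,r^\omega)$ as the Kepler parabola $\rho=M^2/(1+\cos\tilde\theta)$ on $\tilde\theta\in(-\pi,\pi)$, with periapsis at $\tilde\theta=0$, $\rho=M^2/2$. Pulling back via $\theta=\tilde\theta/\omega$ gives the single-valued curve $r(\theta)=\bigl(M^2/(1+\cos\omega\theta)\bigr)^{1/\omega}$ for $\theta\in(-\pi/\omega,\pi/\omega)$. A self-intersection is then a pair $\theta_1\neq\theta_2$ in this range with $r(\theta_1)=r(\theta_2)$ and $\theta_1-\theta_2\in 2\pi\bZ$: evenness of $\cos\omega\theta$ forces $\theta_2=-\theta_1$, and the difference condition yields $\theta_1=\pi k$ for a nonzero integer $k$ with $|k|<1/\omega$, which reproduces the count $n$ from (2.4). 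Between the visits at $\theta=\pm\pi k$ the trajectory winds exactly $k$ times around the origin, the sense of winding being determined by $\mathrm{sgn}(M)$.

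Imposing $r(\pi k)=|x|$ and solving for $M$ gives
\begin{gather*}
M^2 = 2|x|^{\omega}\cos^2(\pi k\omega/2),
\end{gather*}
hence $|M_k^\pm| = \sqrt{2}\,|x|^{1-\alpha}\cos(\pi k(1-\alpha))$; the cosine is strictly positive for $1\le k\le n$ because $\pi k(1-\alpha)<\pi/2$ by the very definition of $n$. This produces exactly $n$ positive and $n$ negative admissible values of $M$, which after the natural reindexing arrange themselves as $0<\mu_1<\cdots<\mu_n$ as stated. The rotation sending $\theta=0$ to $\arg x$ (combined, for $M<0$, with the orientation-reversing reflection) then places the chosen self-intersection at $x$, yielding a trajectory through $x$ that returns to $x$ after $k$ counter-clockwise (respectively clockwise) windings. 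Uniqueness holds because at fixed $M\neq 0$ the trajectory is rigid up to rotation, and the rotation carrying the $k$-th self-intersection to $x$ is unique.

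The only non-routine step is a bookkeeping one: verifying that the integer $k$ indexing the self-intersection really corresponds to $k$ full windings in the prescribed rotational sense, and matching the labeling convention $\mu_1<\cdots<\mu_n$ with the formula, in which $\cos(\pi k(1-\alpha))$ is decreasing in $k$ (so smaller $|M|$ corresponds to more windings). This is purely a matter of keeping orientation consistent in the integration of (2.3) and introduces no analytical difficulty.
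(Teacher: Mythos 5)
Your argument is the same route the paper takes---scaling and rotational symmetry reduce the problem to reading off, for a single reference trajectory, the $|M|$-values at which its successive self-intersections lie on the circle $|x'|=|x|$---but you spell it out where the paper only gestures (``All these trajectories are similar and obtained from one with $M=1$ by scaling and rotation.''). Your explicit computation is correct, and in fact it corrects two slips in the paper's text. First, eliminating $\cos(\omega\theta_k)$ from \textup{(\ref{eqn-2.8})} using $\omega\theta_k=\pi-\omega\pi k$ gives $\gamma_k^\omega=1/(1+\cos(\omega\pi k))$, so $\mu_k=(1+\cos(\omega\pi k))^{1/2}=\sqrt{2}\,|\cos(\pi k(1-\alpha))|$; this matches your cosine formula and not \textup{(\ref{eqn-2.12})}, which has the wrong sign (and would give a sine). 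Second, since $\cos(\pi k(1-\alpha))$ is strictly decreasing over $1\le k\le n$, the natural indexing by winding number gives $\mu_1>\mu_2>\cdots>\mu_n$, which agrees with \textup{(\ref{eqn-2.6})} but not with the ordering $0<\mu_1<\cdots<\mu_n$ written in \textup{(\ref{eqn-2.5})} itself; you noticed this (``smaller $|M|$ corresponds to more windings'') and absorbed it into ``reindexing,'' but it would be cleaner to state outright that the monotonicity in \textup{(\ref{eqn-2.5})} should be reversed to match \textup{(\ref{eqn-2.6})}. One small further point: the paper's exponent $|x|^{2-2\alpha}$ in $M^+_k(x)$ is a typo for $|x|^{1-\alpha}$ (as in $M^-_k(x)$ and as your scaling $M\propto|x|^{\omega/2}$ produces). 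None of this affects the substance of your proof, which is sound and more carefully bookkept than the original.
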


\vskip-\baselineskip
\begin{claim}\label{eqn-2.6}
Let $\frac{1}{2}<\alpha <1$. Consider trajectory with $M=1$. Denote by $\gamma_k$, $k=n,n-1,\ldots, 1$ the radius of $(n-k+1)$-th intersection counting from ininity; then
$\gamma_n>\gamma_{n-1}>\ldots >\gamma_1$ and therefore $\mu_k = \gamma_k ^{\alpha-1}$ satisfy $\mu_1 >\ldots > \mu_n$.
\end{claim}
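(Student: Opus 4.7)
The plan is to exploit the Kepler-parabola description in the $(\theta,\rho)$-coordinates with $\rho=r^{\omega}$ derived at (\ref{eqn-2.3}): at energy $0$ and $M=1$ the curve is a genuine Kepler parabola, which I would write explicitly as $\rho=p/(1+\cos\psi)$ with true anomaly $\psi\in(-\pi,\pi)$ and semi-latus rectum $p$ fixed by $M=1$. The original trajectory in the $(r,\theta)$-variables is its image under the conformal map $r=\rho^{1/\omega}$, $\theta=\omega^{-1}\psi$. The radius $r(\psi)=r(-\psi)$ is strictly decreasing on $(-\pi,0)$, strictly increasing on $(0,\pi)$, and tends to $+\infty$ as $\psi\to\pm\pi$.

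To enumerate self-intersections I look for pairs $\psi_1\ne\psi_2$ in $(-\pi,\pi)$ with the same $r$ and the same $\theta\bmod 2\pi$. Equality of $r$ forces $\psi_2=-\psi_1$, and then $\theta_1-\theta_2=2\omega^{-1}\psi_1\in 2\pi\bZ$ gives $\psi_1=\pi k\omega$ with $k$ a positive integer. The constraint $\psi_1\in(0,\pi)$ picks out exactly the $n$ values of $k$ counted in (\ref{eqn-2.4}), and the radius at the $k$-th such intersection is $r_k=\bigl(p/(1+\cos(\pi k\omega))\bigr)^{1/\omega}$. Since $0<\pi k\omega<\pi$ on $1\le k\le n$, the function $k\mapsto\cos(\pi k\omega)$ strictly decreases, hence $r_k$ strictly increases in $k$.

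It remains to match this algebraic index $k$ with the ordering ``counting from infinity''. Following time forward the trajectory enters at $\psi=-\pi$ (where $r=\infty$) and $|\psi|$ decreases monotonically to $0$ at perihelion; the first self-intersection met is therefore the one with the largest $|\psi|$, namely $k=n$, then $k=n-1$, and so on down to $k=1$. Setting $\gamma_k=r_k$ gives $\gamma_n>\gamma_{n-1}>\ldots>\gamma_1$, and since $\alpha-1<0$, the identity $\mu_k=\gamma_k^{\alpha-1}$ reverses the inequalities to yield $\mu_1>\mu_2>\ldots>\mu_n$. The one step I expect to need a little care is this final bookkeeping, identifying the algebraic index $k$ with the order in which the trajectory, entering from infinity, meets its self-crossings; once the parabola parametrization is in place, everything else is the short monotonicity argument above.
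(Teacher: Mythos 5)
Your proof is correct and follows essentially the same route the paper takes via (\ref{eqn-2.8})--(\ref{eqn-2.12}): pass to the Kepler parabola in the conformal variables $(\rho,\psi)=(r^\omega,\omega\theta)$, locate the self-intersections at $\psi=\pm\pi k\omega$ by combining equal-$\rho$ (forcing $\psi_2=-\psi_1$) with $\theta_1-\theta_2\in 2\pi\bZ$, and read the monotonicity of $\gamma_k$ off the strict decrease of $\cos$ on $(0,\pi)$, with the final bookkeeping matching the algebraic index $k$ to the order of traversal from $\psi=-\pi$ inward. One remark in your favour: your formula $\gamma_k^\omega=1/\bigl(1+\cos(\omega\pi k)\bigr)$ is the correct one, since substituting $\theta_k=\pi\omega^{-1}-\pi k$ from (\ref{eqn-2.11}) into (\ref{eqn-2.8}) gives $1-\cos(\omega\theta_k)=1+\cos(\omega\pi k)$, so the $1-\cos(\omega\pi k)$ in (\ref{eqn-2.12}) is a sign typo (with that sign the $\gamma_k$ would be \emph{decreasing} in $k$, contradicting (\ref{eqn-2.6})); your version is the one consistent with the claim.
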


Let $\mu_0=0$.

\begin{claim}\label{eqn-2.7}
Let $x, y\in \bR^2\setminus 0$. There are $n$ trajectories from $x$ to $y$,  winding around $0$ in the counter-clockwise  direction either $1,2,\ldots , n$ or $0,1,\ldots , n-1$ times correspondingly, and $n$ trajectories from $x$ to $y$ windingaround $0$ in the clockwise  direction either $0,1,\ldots , n-1$ or $1,2,\ldots , n$ times correspondingly.
\end{claim}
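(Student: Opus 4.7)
The plan is to use the conformal transformation (\ref{eqn-2.3}) to reduce counting trajectories from $x$ to $y$ to counting Kepler parabolic arcs with focus at the origin. I lift to the universal cover of the angular variable: fix a lift $\tilde x$ of $x$, and let $\tilde y_k=(r_y,\theta_y+2\pi k)$ for $k\in\bZ$ be the lifts of $y$; a CCW Hamiltonian trajectory from $x$ to $y$ winding $k$ times around $0$ lifts uniquely to a trajectory from $\tilde x$ to $\tilde y_k$. Under the conformal map $(\theta,r)\mapsto(\omega\theta,r^\omega)$ this becomes a single arc of a Kepler parabola (focus at the origin) from $\tilde x'=(\rho_x,\omega\theta_x)$ to $\tilde y_k'=(\rho_y,\omega(\theta_y+2\pi k))$, with $\rho=r^\omega$.

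Since every Kepler parabola sweeps an angular range of exactly $2\pi$, the arc exists iff $\omega(\theta_{yx}+2\pi k)\in(0,2\pi)$, equivalently $0<k+\theta_{yx}/(2\pi)<\omega^{-1}$; by (\ref{eqn-2.4}) the admissible $k\ge 0$ form a set of cardinality $n$, which is either $\{0,1,\ldots,n-1\}$ or $\{1,2,\ldots,n\}$ depending on whether $\theta_{yx}/(2\pi)$ is larger or smaller than the fractional part of $\omega^{-1}$. For each admissible $k$ the perihelion direction $\phi_0$ of the connecting parabola is determined by $\rho_x(1+\cos(\omega\theta_x-\phi_0))=\rho_y(1+\cos(\omega(\theta_y+2\pi k)-\phi_0))$, a trigonometric equation of the form $A\cos\phi_0+B\sin\phi_0=C$ with explicit coefficients, having two roots modulo $2\pi$; I would check that exactly one of them lies in the angular window $(\omega(\theta_y+2\pi k)-\pi,\,\omega\theta_x+\pi)$ that puts both $\tilde x'$ and $\tilde y_k'$ on a single parabola lift (the other root yields a parabola whose lift connects $\tilde x'$ to a different conformal lift $\tilde y_{k'}'$ with $k'\ne k$). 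This gives a unique CCW trajectory per admissible $k$, and the CW count follows by the symmetric argument with $k\le -1$, using (\ref{eqn-2.6}) to align the ordering of the $\mu_k$'s with the windings.

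The main obstacle is verifying the ``correspondingly'' pairing of CCW and CW winding sets in the statement. I expect this to drop out by observing that for each conformal pair $(\tilde x',\tilde y_k')$ the two roots $\phi_0^{(1)},\phi_0^{(2)}$ of the trigonometric equation correspond, respectively, to a CCW arc of winding $k$ and to a CW arc with complementary winding, so that the CCW and CW winding indices associated to a single parabolic pair always sum to a fixed integer determined by $\omega^{-1}$; tracking this constant in the two regimes — $\theta_{yx}/(2\pi)$ above or below the fractional part of $\omega^{-1}$ — yields exactly the two dual pairings $\{1,\ldots,n\}/\{0,\ldots,n-1\}$ and $\{0,\ldots,n-1\}/\{1,\ldots,n\}$ stated in the claim.
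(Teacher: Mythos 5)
Your route — conformal reduction to Kepler parabolas, lift to the universal cover, count arcs via the Lambert-type trigonometric equation for the perihelion direction — is genuinely different from the paper's. The paper's own proof is a one-line monotonicity/IVT argument: among trajectories issued from $x$ in a fixed rotational sense, parametrized by angular momentum, the deviation $|r(\theta)-|x||$ is monotone in $|M|$ for each fixed $\theta$, so the family ``fans out'' and each level is hit exactly once. Your Kepler route is attractive and is essentially the same geometry used in (\ref{eqn-2.8})--(\ref{eqn-2.12}) for counting self-intersections, applied instead to the two-point boundary value problem; but as written it has two concrete gaps.

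First, the admissibility count does not come out as you claim. With $\theta_{yx}/(2\pi)=:s\in(0,1)$ and $\omega^{-1}=n+f$, $f\in(0,1)$, the condition $0<k+s<\omega^{-1}$ for integers $k\ge 0$ always admits $k=0$ (since $s>0$), and gives $\{0,\ldots,n-1\}$ when $s>f$ but $\{0,\ldots,n\}$ (cardinality $n+1$) when $s<f$; the set $\{1,\ldots,n\}$ never arises from this inequality, and nothing in the setup removes $k=0$. So the reduction to the two options in the statement does not follow from your admissibility condition; it is asserted to force agreement with the claim. Second, the decisive uniqueness step (exactly one of the two roots $\phi_0^{(1)},\phi_0^{(2)}$ gives a valid CCW arc) is flagged as ``I would check'' and left open; moreover the mechanism you suggest — that the other root connects $\tilde x'$ to a different lift $\tilde y_{k'}'$ — is wrong: the second parabola gives the CW arc through the same downstairs pair, with sweep $\delta-2\pi$, which pulls back to angular change $\theta_{yx}+2\pi k-2\pi\omega^{-1}$; that equals $\theta_{yx}+2\pi k'$ for integer $k'$ only when $\omega^{-1}\in\bZ$, so for generic $\alpha$ the second root does not correspond to any lift of $y$ at all. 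The correct way to finish this step is a sign-change argument: writing $g(\phi_0)=\rho_1(1+\cos(\alpha_1-\phi_0))-\rho_2(1+\cos(\alpha_2-\phi_0))$, one has $g(\alpha_2-\pi)>0$ and $g(\alpha_1+\pi)<0$, so $g$ has exactly one zero in the CCW window $(\alpha_2-\pi,\alpha_1+\pi)$ and one in its complement; that replaces both the unproved uniqueness claim and the speculative ``sum to a fixed integer'' pairing in your last paragraph.
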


Indeed,  consider two trajectories, issued from $x$ in the corresponding  direction, with the angular momenta $M_1$ and $M_2$, parametrized by $\theta$.
Then   $|r_1(\theta) -|x| | >  |r_2(\theta) -|x| |$ provided $\theta \gtrless 0$ and $|M_1|<|M_2|$.

\begin{figure}[h]
\centering
 \begin{tikzpicture}[xscale=4.1,yscale=4.1]
\draw (-1.5,-1) rectangle (1.5,1.);
 \clip (-1.5,-1) rectangle (1.5,1.);
\fill (0,0) circle (.015);
\fill[red] (.745,0) circle (.015);
\foreach \a in {240}
\draw[red, ultra thin, domain=180-\a : 1200-\a,scale=1.5,samples=500] plot ({\x+\a}: {( sin (\a/8)/ sin ((\x+\a) /8))^2});
\foreach \a in {325}
   \draw[blue, ultra thin, , domain=160-\a : 1270-\a,scale=1.5,samples=500] plot ({\x+\a+180}: {( sin (\a/8)/ sin ((\x+\a) /8))^2});
\foreach \a in {125}
   \draw[black, ultra thin, , domain=100-\a : 1320-\a,scale=1.5,samples=500] plot ({\x+\a+180}: {( sin (\a/8)/ sin ((\x+\a) /8))^2});
\end{tikzpicture}
\caption{\label{fig-1} This figure  shows several loops in the same (red) point, around origin (black point).}
\end{figure}
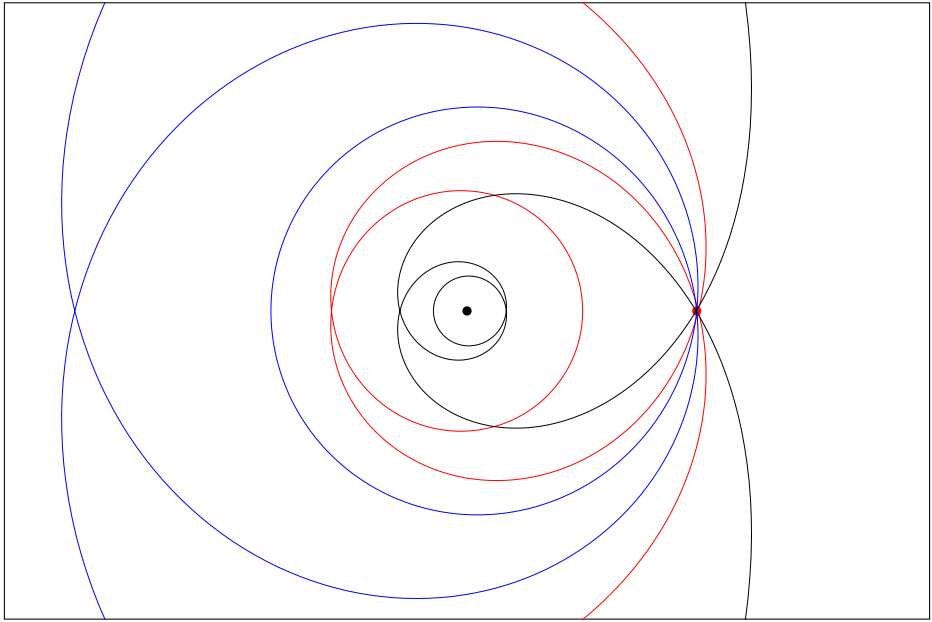

The trajectory  is (rotated)
\begin{gather}
r^\omega  = \frac{M^2}{(1-\cos ( \omega\theta))}
\label{eqn-2.8}.
\end{gather}
Indeed, it follows from the fact that  as $\omega=1$ the trajectory is a parabola $r= \frac{A}{1-\cos(\theta)}$ and therefore in the general case
$r^\omega = \frac{A}{(1-\cos ( \omega\theta))}$. Applying dynamic equations
\begin{gather}
\frac{dr}{dt}=\pm \sqrt{2 r^{\omega-2}- M^2 r^{-2}}, \label{eqn-2.9}\\
\frac{d\theta}{dt} =Mr^{-2}
\label{eqn-2.10}
\end{gather}
we conclude that $A^\omega=M^2$. 
Then self-intersections are  achieved as
\begin{gather}
2 \theta_k + 2\pi k =2\pi \omega^{-1}\implies \theta_k = \pi \omega^{-1}-\pi k,
\label{eqn-2.11}\\
\gamma_k ^\omega = \frac{1} {(1-\cos (\omega \pi k))},\qquad \mu_k =\gamma_k^{-\omega/2}=(1-\cos (\omega \pi k))^{\frac{1}{2}}.
\label{eqn-2.12}
\end{gather}

 \paragraph{Dynamics: $\mathbf{d=2}$.}
Consider trajectory passing through $x$ with $x$ being $k$-th points of self-intersection. Then $T_k(x)= t_k |x|^{1+\alpha}$. 
Meanwhile
\begin{multline}
dt=   r^{2} d\theta = M^{4/\omega}  (1-\cos ( \beta\theta))^{-2/\omega}\,d\theta\\ 
\begin{aligned} \implies t_k =& 2\mu_k ^{4/\omega} \int_{\theta_k}^{\pi }  (1-\cos ( \omega\theta))^{-2/\omega} \, d\theta\\ 
                             =& 2(1-\cos (\omega\theta_k))^{2/\omega} \int_{\theta_k}^{\pi }  (1-\cos ( \omega\theta))^{-2/\omega} \, d\theta .
                             \end{aligned}
\label{eqn-2.13}
\end{multline}

\begin{remark}\label{rem-2.1}
 \begin{enumerate}[wide, label=(\roman*), labelindent=0pt]
\item\label{rem-2.1-i}
We do not know how $t_k$ are ordered and even if they are ordered at all.
\item\label{rem-2.1-ii}
There is however the purely radial movement with $M=0$ which is the limit of trajectories as $M\to 0$. We denote its time by
\begin{gather}
t_0 = 2\int _0^1 r^\alpha \,dr = \frac{2}{1+\alpha} .
\label{eqn-2.14}
\end{gather}
\end{enumerate}
\end{remark}

\paragraph{$\mathbf{d\ge 3}$.} 
\begin{claim}\label{eqn-2.15}
As   $x=ky\ne 0$,  due to rotational symmetry (along straight line passing through $0$ and $x$) instead of $n$ separate curves we get $n$ $(d-2)$-parameter families.
\end{claim}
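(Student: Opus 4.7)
The plan is to exploit the full orthogonal symmetry of the toy-model Hamiltonian together with conservation of angular momentum, and then count the orbits of the symmetry group on the set of trajectories between $x$ and $y=x/k$.

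First, I would observe that $A_h^0$ and hence its classical symbol $H(x,\xi)=\tfrac12|\xi|^2-|x|^{-2\alpha}$ is invariant under the full orthogonal group $O(d)$ acting diagonally on $(x,\xi)$. This invariance lifts to the Hamiltonian flow: if $\gamma(t)=(x(t),\xi(t))$ is a trajectory and $g\in O(d)$, then $(gx(t),g\xi(t))$ is again a trajectory on the same energy level.

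Second, I would use conservation of the angular-momentum tensor $M_{ij}=x_i\xi_j-x_j\xi_i$ to conclude that every non-radial trajectory lies entirely inside the unique $2$-plane through the origin spanned by its initial position and velocity. Consequently any trajectory from $x$ to $y$ with $M\ne 0$ is contained in some $2$-plane $\Pi$ through $0$ which also contains both $x$ and $y$, and on $\Pi$ the dynamics is exactly that studied in the $d=2$ discussion above, so by Claim~\textup{(\ref{eqn-2.7})} there are precisely $n$ such trajectories in each rotational sense.

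Third, specializing to $x=ky\ne 0$, the points $x$ and $y$ share a common line $\ell$ through the origin, and a $2$-plane through $0$ contains both $x$ and $y$ if and only if it contains $\ell$. The stabilizer $\mathrm{Stab}_{O(d)}(\ell)\cong O(1)\times O(d-1)$ acts transitively on the set of $2$-planes containing $\ell$, and this set is naturally parametrized by $\bS^{d-2}/\{\pm 1\}$, giving a $(d-2)$-dimensional orbit. Applying this $O(d-1)$-action to each of the $n$ trajectories supplied by the $d=2$ analysis sweeps out $n$ distinct $(d-2)$-parameter families of trajectories from $x$ to $y$ in $\bR^d$.

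The main obstacle, and really the only point needing care, is disentangling the counting: one must check that distinct $2$-planes through $\ell$ produce geometrically distinct trajectories (which holds because a non-radial trajectory recovers its $2$-plane as the span of any $(x(t),\xi(t))$ with $x(t)\notin\ell$), and that the purely radial $M=0$ trajectory of Remark~\ref{rem-2.1}\ref{rem-2.1-ii} is not miscounted as a degenerate $(d-2)$-parameter family — it remains a single isolated curve, fixed pointwise by $\mathrm{Stab}_{O(d)}(\ell)$, and is therefore naturally separate from the $n$ families produced by the symmetry argument.
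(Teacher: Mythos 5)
Your proposal is correct and takes essentially the same route as the paper: the paper simply invokes the rotational symmetry about the line through $0$ and $x$ as the reason, and you have fleshed out exactly that idea — $O(d)$-invariance, confinement of non-radial orbits to a $2$-plane through the origin, and the $\mathbb{RP}^{d-2}\cong\bS^{d-2}/\{\pm 1\}$ parametrization of $2$-planes through $\ell$ under the action of $\mathrm{Stab}_{O(d)}(\ell)$. Your closing check that the radial $M=0$ orbit is fixed pointwise by the stabilizer, and so does not get promoted to a spurious family, is a worthwhile detail the paper leaves implicit.
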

\vskip-\baselineskip

\begin{claim}\label{eqn-2.16}
On the other hand, if $x\neq  ky$ since the trajectory from $x$ to $y$ belongs to a single $2$-dimensional plane, passing through $x,y,0$ the number of trajectories is as for $d=2$.
\end{claim}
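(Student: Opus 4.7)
The plan is to exploit the rotational $SO(d)$-symmetry of the Hamiltonian $H(x,\xi)=\frac{1}{2}|\xi|^2-|x|^{-2\alpha}$, which implies conservation of all components of the angular momentum tensor $L_{ij}=x_i\xi_j-x_j\xi_i$. The classical consequence is that every Hamiltonian trajectory lies in a fixed $2$-plane through the origin, namely the plane spanned by the initial position and initial momentum (when they are independent; the case of dependent vectors is the purely radial motion already addressed in Remark~\ref{rem-2.1}\ref{rem-2.1-ii}).

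First I would observe that if a trajectory $\gamma(t)=(x(t),\xi(t))$ starts at $x(0)=x$ with $\xi(0)=\xi_0$ and $x,\xi_0$ are linearly independent, then $L_{ij}(t)\equiv L_{ij}(0)$, which forces $x(t)$ and $\xi(t)$ to remain in the plane $P=\mathrm{span}(x,\xi_0)$ for all $t$; this is immediate by applying an $SO(d)$-element fixing $P$ pointwise to the trajectory and using uniqueness of solutions of the Hamilton equations. Hence for any trajectory connecting $x$ to $y$, the point $y$ lies in $P$.

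Next, since $x\neq ky$ for any scalar $k$, the vectors $x$ and $y$ are linearly independent, so the $2$-plane through $x$, $y$ and $0$ is uniquely determined; any trajectory from $x$ to $y$ must have its plane of motion equal to this plane $P_{x,y}$. Within $P_{x,y}$ (choose orthonormal coordinates), the restricted Hamiltonian takes exactly the form of the $d=2$ toy-model operator's symbol, because $|x|^{-2\alpha}$ depends only on the Euclidean length, which is intrinsic to $P_{x,y}$. Therefore the trajectories from $x$ to $y$ are in bijection with the $d=2$ trajectories between these points, and claim \ref{eqn-2.7} applies verbatim.

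The only technical point to be careful about is the reduction of the Hamiltonian dynamics to $P_{x,y}$: one must verify that the restricted system really coincides with the intrinsic $2$-dimensional Hamiltonian flow, which follows from the fact that $\nabla(|x|^{-2\alpha})$ points radially and hence the Hamiltonian vector field is tangent to $P_{x,y}$ at every point of $P_{x,y}$. Once this is in place, the counting in claim \ref{eqn-2.7} transfers, and \ref{eqn-2.16} is established.
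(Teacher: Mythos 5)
Your proof is correct and takes essentially the same approach the paper leaves implicit: the paper states claim \textup{(\ref{eqn-2.16})} without argument, taking the planarity of central-force orbits as known, and you have simply spelled out the standard reasoning (conservation of the angular momentum tensor forces the trajectory into $\mathrm{span}(x,\xi_0)$, hence into the unique plane through $x,y,0$ when $x\neq ky$, and the intrinsic dynamics there coincides with the $d=2$ model so claim \textup{(\ref{eqn-2.7})} applies). You also correctly handle the degenerate case $\xi_0\parallel x$, which yields purely radial motion that cannot reach $y\neq kx$ and therefore does not contribute.
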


\section{Estimates}
\label{sect-2.2}

\begin{proposition}\label{prop-2.2}
Consider toy-model operator \textup{(\ref{eqn-1.1})}.  Then
\begin{enumerate}[wide, label=(\roman*), labelindent=0pt]
\item  \label{prop-2.2-i}
For $T\ge \epsilon $, $|y|\le C_0T^{1/(1+\alpha)}$ and  $|x|\ge 2C_0T^{1/(1+\alpha)}$ 
\begin{gather}
| F_{t\to h^{-1}\tau}  \bigl(\bar{\chi }_T(t)  u_h(x,y,t) \bigr)|  \le C  h^s T^{-s}   \qquad \forall\tau\colon |\tau|\le \epsilon_1 T^{-2\alpha/(1+\alpha)}.
\label{eqn-2.17}
\end{gather}
\item \label{prop-2.2-ii}
For $T\ge C_0$,  $|x|\le \epsilon T^{1/(1+\alpha)}$ and $|\y|\le \epsilon T^{1/(1+\alpha)}$ 
\begin{gather}
| F_{t\to h^{-1}\tau}  \bigl(\chi _T(t)  u_h(x,y,t) \bigr)|  \le C  h^s T^{-s}  \qquad \forall\tau\colon |\tau|\le \epsilon _1T^{-2\alpha/(1+\alpha)}.
 \label{eqn-2.18}
 \end{gather}
 \end{enumerate}
\end{proposition}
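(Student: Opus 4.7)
My plan is to apply the standard microlocal/FIO reduction. The Schwartz kernel $u_h$ of $e^{ith^{-1}A^0_h}$ can be represented (away from the singular locus of the potential) as a semiclassical FIO whose canonical relation is the Hamiltonian flow of the principal symbol $a^0(x,\xi)=\tfrac{1}{2}|\xi|^2-|x|^{-2\alpha}$. Non-stationary phase in $(t,\xi)$ against $e^{-ith^{-1}\tau}$ shows that $F_{t\to h^{-1}\tau}$ of the time-cut-off kernel is $O(h^sT^{-s})$ whenever, for every $\tau$ in the admissible window, no classical Hamiltonian trajectory of energy $\tau$ joins $y$ to $x$ in time $t$ lying in the support of the cutoff. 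Thus both statements reduce to classical no-connecting-trajectory facts drawn from Subsection~\ref{sect-2.1}.

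For part~\ref{prop-2.2-i}, fix $\tau$ with $|\tau|\le \epsilon_1 T^{-2\alpha/(1+\alpha)}$. Energy conservation $\tfrac{1}{2}|\dot x|^2=\tau+|x|^{-2\alpha}$ gives $|\dot r|\le |\dot x|\le \sqrt{2}\,r^{-\alpha}(1+o(1))$ on the relevant region, so the radial ascent along any trajectory from $|y|\le C_0T^{1/(1+\alpha)}$ to $|x|\ge 2C_0T^{1/(1+\alpha)}$ takes time at least $(|x|^{1+\alpha}-|y|^{1+\alpha})/(\sqrt{2}(1+\alpha))(1-o(1))\ge ((2^{1+\alpha}-1)C_0^{1+\alpha}/(\sqrt{2}(1+\alpha)))\,T$, which exceeds $T$ once $C_0$ is large enough. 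Hence no trajectory with $|t|\le T$ connects $y$ to $x$, and the FIO argument yields (\ref{eqn-2.17}).

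For part~\ref{prop-2.2-ii}, the cutoff $\chi_T$ imposes $|t|\in [T/2, T]$. The orbital time scale at radius $r$ is of order $r^{1+\alpha}$, so a connecting trajectory cannot remain inside $\{|x|\le \epsilon T^{1/(1+\alpha)}\}$ for such long $|t|$; it must therefore excurse to large radii. By the Kepler-parabolic description in Subsection~\ref{sect-2.1}, at $\tau=0$ such excursions are unbounded and the orbit does not return, while at $\tau<0$ the bound orbit has period of order $|\tau|^{-(1+\alpha)/(2\alpha)}$, and the hypothesis $|\tau|\le \epsilon_1 T^{-2\alpha/(1+\alpha)}$ makes this period at least $\epsilon_1^{-(1+\alpha)/(2\alpha)}\,T\gg T$ for small $\epsilon_1$, preventing a return to the small region within the time window. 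Thus no connecting trajectory exists.

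The main obstacle is the singularity of the potential at the origin, where the naive FIO representation fails. For part~\ref{prop-2.2-i} this is easy to accommodate: the relevant trajectories must travel outward, so a short-time speed-of-propagation cutoff near the origin controls the initial segment. For part~\ref{prop-2.2-ii} one must microlocally excise a neighborhood of $\{x=0\}$ and of $\{|x\times \xi|=0\}$ and analyze propagation near them separately; this is the propagation-near-axis ingredient advertised in Subsection~\ref{sect-2.2}, which I expect to be the technical heart of the argument, while the classical no-trajectory calculation above supplies the main dynamical input.
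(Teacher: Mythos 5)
Your approach is genuinely different from the paper's. You propose a semiclassical FIO representation of $u_h$ and non-stationary phase in $(t,\xi)$, reducing to the absence of connecting classical trajectories; the paper instead runs a commutator (escape-function) argument. For part~\ref{prop-2.2-i} the escape function is $\phi=\zeta(C_0 t - |x| + c)$ and the input is the speed bound $|\{a,|x|\}|\le C_0$ on bounded energy shells, i.e.\ (\ref{eqn-2.19}); for part~\ref{prop-2.2-ii} it is $\phi = \zeta(-\epsilon_0 t + \langle x,\xi\rangle + c)$ and the input is the virial-type positivity $\{a,\langle x,\xi\rangle\} = |\xi|^2-2\alpha|x|^{-2\alpha} = 2(1-\alpha)|x|^{-2\alpha}>0$ on the zero energy shell, i.e.\ (\ref{eqn-2.20}). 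Both cases are first treated for $T\asymp 1$ and then extended to general $T$ by the scaling $x\mapsto xR^{-1}$, $\xi\mapsto\xi R^{\alpha}$, $t\mapsto tR^{-1+\alpha}$, $h\mapsto hR^{-1+\alpha}$; you omit this reduction. Your classical no-connecting-trajectory computations (radial escape time in part~\ref{prop-2.2-i}, return period versus the energy window in part~\ref{prop-2.2-ii}) are the dynamical shadow of exactly these Poisson-bracket sign conditions, so the underlying dynamics agree; what differs is how that dynamics is converted into the quantitative $O(h^sT^{-s})$ bound.

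The gap sits precisely where you flag it. You propose to repair the breakdown of the FIO parametrix near $x=0$ by invoking the propagation-near-$\{|x\times\xi|=0\}$ analysis, but that machinery (Propositions~\ref{prop-2.6}--\ref{prop-2.12}, the spherical-coordinate reduction and long-range propagation) serves a different purpose in the paper: it resolves self-intersection loops and is needed for the refined estimate (\ref{eqn-1.8}), not for the coarse escape estimate of Proposition~\ref{prop-2.2}. Proposition~\ref{prop-2.2} needs none of it: the commutator estimate requires no parametrix near the origin or near the axis, is insensitive to caustics, and iterates cleanly because the lower-order terms in $[\,hD_t-A,\phi\,]$ pick up powers of $h$ but no negative powers of $|x|$. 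Your plan instead requires a global FIO calculus valid past caustics over times $\asymp T$ together with a quantitative remainder for the excised neighbourhood of the singularity; neither is supplied, and each would be harder to set up than the statement they are meant to prove. As written, the sketch carries out the routine part (classical dynamics) in detail and defers the entire analytic content.
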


\begin{proof}
Let $\zeta \in \sC^\infty (\bR)$, $\zeta(t) =0$ as $t\ge 1$ and $\zeta (t)=1$ as $t\le 0$, $\zeta (t)<1$ as $0<t<1$.
\begin{enumerate}[wide, label=(\roman*), labelindent=0pt]
 \item\label{pf-2.2-i}
Assume first that  $|y|\le 1$, $|x|\ge 2$ and $T=\epsilon $. Then the proof is standard, based for $\pm t >0$ on $\phi (x,\xi ,t)\coloneqq \zeta (C_0t -|x|+ c)$ with an appropriate constant $c$; then
\begin{gather}
|\{\tfrac{1}{2}|\xi |^2 -|x|^{-2\alpha}, \,|x| \}| \le C_0\qquad \text{for\ \ } \  |\xi  |\le 1.
\label{eqn-2.19}
 \end{gather}
 One can see easily that in the commutator of $hD_t-A$ with $\phi$ all next terms acquire factors $h$ but not any negative powers of $x$.

After (\ref{eqn-2.17}) is proven in this special case we prove it in the general case by scaling $x\mapsto x R^{-1}$, $\xi  \mapsto \xi  R^{\alpha}$, $t\mapsto tR^{-1+\alpha}$, $h\mapsto hR^{-1+\alpha}$.

 \item\label{pf-2.2-ii}
Again consider first $T\asymp 1$, $|y|\le 1$. Let us use our standard method with $\phi (x,t,\xi )\coloneqq \zeta (-\epsilon _0 t +\langle x, \xi \rangle +c )$. Observe that
\begin{gather}
\{\tfrac{1}{2}|\xi |^2 -|x|^{-2\alpha}, \, \langle x,\xi \rangle\}= |\xi |^2 -2\alpha |x|^{-2\alpha}
\label{eqn-2.20}
\end{gather}
which on the energy level $0$ equals $2(1-\alpha) |x|^{-2\alpha}$. We select $c$ such that $\phi = 0$ as $|x|\le 1$,
$|\xi |^2 -2 |x|^{-2\alpha}=0$. Then according to \ref{prop-2.2-i}  we conclude that 
\begin{gather}
| F_{t\to h^{-1}\tau}  \bigl(\chi ^+ _T(t)  (I-Q^+) u_h(x,y,t) \bigr)|  \le C  h^s   \qquad \forall\tau\colon |\tau|\le \epsilon _2
\label{eqn-2.21}
\end{gather}
where $Q^+=q^{+\w}$, $\supp (q^+)\subset \{(x,\xi )\colon \langle x,\xi  \rangle \ge \epsilon \ge \epsilon_2 T^{(1-\alpha)/(1+\alpha)}\}$, which implies
\begin{gather*}
| F_{t\to h^{-1}\tau}  \bigl(\chi ^+ _T(t)   u_h(x,y,t) \bigr)|  \le C  h^s  \quad \forall x\colon |x|\le \epsilon_3 T^{1/(1+\alpha)}\  \forall\tau \colon |\tau|\le \epsilon _3.
\end{gather*}
Like in Part \ref{pf-2.2-i} one can see easily that in the commutator of $hD_t-A$ with $\phi $ all next terms acquire factors $h$ but not any negative powers of $x$.

Also, after  (\ref{eqn-2.18}) is proven in this special case we prove it in the general case by the same scaling  as  in Part \ref{pf-2.2-i}.
\end{enumerate}
\end{proof}

\begin{proof}[Proof of Theorem~\ref{thm-1.1}\ref{thm-1.1-i}]
It follows by standard Tauberian method from
\begin{gather}
|F_{t\to h^{-1} \tau} \bigl(\bar{\chi}_T u_h(x,y,\tau)\bigr) |\le Ch^{1-d}\qquad \forall \tau\colon |\tau|\le \epsilon
\label{eqn-2.22}
\end{gather}
with $x=y$ and small constant $T$ that
\begin{multline}
|e(x,x,\tau)-e(x,x,\tau')|\le Ch^{-d}|\tau-\tau'|+ Ch^{1-d}\\
\forall \tau,\tau'\colon |\tau|\le \epsilon_1, |\tau'|\le \epsilon _1
\label{eqn-2.23}
\end{multline}
which in turn implies (\ref{eqn-2.22}) with arbitrarily large $T$ and $C=C_T$; then we take $T=T_0$ and applying it
we conclude that (\ref{eqn-2.22}) holds for arbitrarily large $T$ with $C$ independent on $T$ and with arbitrary $x\colon |x|\le 1$, $y\colon |y|\le 1$, which in turn implies
(\ref{eqn-1.6}).
\end{proof}

Now we need to consider  $x,y\colon |x|\le 1,\, |y|\le 1$ and $T\le T_0$. 

 \begin{proposition}\label{prop-2.3}
Consider toy-model operator \textup{(\ref{eqn-1.1})}. Let   $Q_l=q_l^\w (x,hD)$ be operators such that
\begin{align}
&\supp(q_1)\subset \{ (x,\xi )\colon |x|\le 1,\ |x\times \xi |   \ge (1+\epsilon)\rho \}
\label{eqn-2.24}\\
 \shortintertext{and}
&\supp(q_2)\subset  \{(x,\xi )\colon |x|\le 1,\  \ |x\times \xi | \le (1-\epsilon)\rho  \}
 \label{eqn-2.25}
 \end{align}
where \underline{here and until further notice} $\rho>0$ is an arbitrarily small constant.  Then for $T\ge \epsilon_0$
\begin{gather}
|F_{t\to h^{-1}\tau}\bar{\chi}_T(t)Q_{2x}u_h (x,y,t) \,^t\! Q_{1y}  | \le C_T h^s \qquad \forall\tau\colon |\tau|\le \epsilon'.
\label{eqn-2.26}
\end{gather}
\end{proposition}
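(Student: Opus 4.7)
The plan is to exploit the exact rotational symmetry of $A^0_h$. The angular momentum components $M_{jk}\coloneqq x_j hD_k-x_k hD_j$ commute exactly with $A^0_h$, hence so does the Casimir $\hat L^2\coloneqq\sum_{j<k}M_{jk}^2$, whose principal Weyl symbol is $|x\times\xi|^2$. Since conditions (\ref{eqn-2.24}) and (\ref{eqn-2.25}) separate the supports of $Q_1$ and $Q_2$ along this conserved quantity, a standard commutator argument should yield the claim.

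Concretely, I would pick $\phi\in\sC_0^\infty([0,\infty))$ with $\phi\equiv 1$ on $[0,(1-\tfrac{\epsilon}{2})^2\rho^2]$ and $\phi\equiv 0$ on $[(1+\tfrac{\epsilon}{2})^2\rho^2,\infty)$, together with a phase-space cutoff $\psi\in\sC_0^\infty(\bR^{2d})$ equal to $1$ on a ball $B$ large enough to contain the microlocal support of $\bar\chi_T u_h$ at energies $|\tau|\le\epsilon'$ (such a $B$ exists by Proposition~\ref{prop-2.2} together with energy conservation). Define $\Phi\coloneqq\bigl(\phi(|x\times\xi|^2)\,\psi(x,\xi)\bigr)^\w$. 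Inside $\{\psi=1\}$ the symbol of $\Phi$ depends only on the conserved quantity $|x\times\xi|^2$, so every term of the Moyal expansion of the bracket of the principal symbol $H_0(x,\xi)=\tfrac{1}{2}|\xi|^2-|x|^{-2\alpha}$ with this symbol vanishes; the only nonzero contributions to $[A^0_h,\Phi]$ come from $\{H_0,\psi\}$ and are supported off $\{\psi=1\}$, hence microlocally $O(h^\infty)$ on $B$.

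By Duhamel this gives $[\Phi,U(t)]=O(h^\infty)$ for $|t|\le T$ where $U(t)\coloneqq e^{ith^{-1}A^0_h}$. Since $\phi\equiv 1$ on $\supp q_2$ (which lies inside $B$, hence $\psi\equiv 1$ there) and $\phi\equiv 0$ on $\supp q_1$, the pdo calculus gives $Q_2(I-\Phi)=O(h^\infty)$ and $\Phi\,^t\!Q_1=O(h^\infty)$. Combining,
\begin{equation*}
Q_2 U(t)\,^t\!Q_1 = Q_2\Phi U(t)\,^t\!Q_1 + O(h^\infty) = Q_2 U(t)\Phi\,^t\!Q_1 + O(h^\infty) = O(h^\infty),
\end{equation*}
whose Schwartz kernel is exactly $Q_{2x}u_h(x,y,t)\,^t\!Q_{1y}$. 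Multiplying by $\bar\chi_T(t)$ and applying $F_{t\to h^{-1}\tau}$ preserves the $O(h^\infty)$ bound on $|\tau|\le\epsilon'$, yielding (\ref{eqn-2.26}).

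The main obstacle will be the truncation $\psi$: $|x\times\xi|^2$ is not a proper semiclassical symbol in $\xi$, so $\psi$ must be inserted, and its artifacts $\{H_0,\psi\}$ must be shown to act trivially on the relevant part of $u_h$. The key enabler is the preliminary confinement of the microlocal support of $\bar\chi_T u_h$ to a fixed compact set, which is exactly what Proposition~\ref{prop-2.2}(ii) together with the energy equation $|\xi|^2-2|x|^{-2\alpha}=O(|\tau|+h)$ gives on the support of $q_1,q_2$, where $|x|\le 1$ is already built in. Once this localization is in place, the commutator identity $[A^0_h,\Phi]=O(h^\infty)$ becomes microlocally effective and the argument above is routine.
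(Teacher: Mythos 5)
Your proposal is correct and follows exactly the line the paper has in mind: the author's one‑sentence proof (``standard microlocal methods based on that operators $(x_kD_j-x_jD_k)$ commute with $A$'') is precisely the commutator argument you spell out, with the angular‑momentum Casimir used to separate the $|x\times\xi|$‑supports of $q_1$ and $q_2$.

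One remark on the step you state most casually. You assert that because $\phi(|x\times\xi|^2)$ depends only on a Poisson‑conserved quantity, \emph{every} term of the Moyal bracket $\{H_0,\phi(L^2)\}_M$ vanishes; for a general rotationally invariant Hamiltonian that would be false (only the leading Poisson bracket is zero; the $h^2$ and higher terms need not vanish). Here the claim happens to be true, but the reason is specific to the form $H_0=\tfrac12|\xi|^2-|x|^{-2\alpha}$: since $H_0$ is additively separable and $\partial_\xi^\mu H_0=0$ for $|\mu|\ge 3$, the $m$‑th Moyal term ($m\ge 3$, odd) reduces to a full contraction of $\nabla_x^m|x|^{-2\alpha}$ (a radial symmetric tensor, which for odd $m$ always carries at least one factor of $\hat x$) with $\nabla_\xi^m\phi(L^2)$ (a symmetrized product of $\nabla_\xi L^2$, which is orthogonal to $x$, and $\nabla^2_\xi L^2=2(|x|^2\delta-x\otimes x)$, which annihilates $\hat x$). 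Each such contraction is therefore zero. Spelling this out — or, more economically, replacing $\bigl(\phi(L^2)\psi\bigr)^\w$ by the functional‑calculus operator $\phi(\hat L^2)$, which commutes with $A^0_h$ \emph{exactly} and has a symbol that equals $\phi(L^2)$ to all orders in $h$ away from $\supp\phi'$ — would make the argument airtight. As written, the key claim is true but less obvious than your phrasing suggests. The rest (the cutoff $\psi$, the microlocal confinement via Proposition~\ref{prop-2.2}, the pseudodifferential disjointness $Q_2(I-\Phi)=O(h^\infty)$, $\Phi\,^t\!Q_1=O(h^\infty)$, and the Duhamel sandwich) is exactly right.
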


\begin{proof}
Easy proof by standard microlocal methods based on that operators $(x_kD_j -x_jD_k)$ commute with $A$, is left to the reader.
\end{proof}

 \begin{claim}\label{eqn-2.27}
 Note that in the framework of (\ref{eqn-2.24}) Hamiltonian trajectories passing through $\supp (q_1)$ are contained in $\{(x,\xi )\colon |x|\ge \epsilon ''\}$ and one can describe $u_h(x,y,t)\,^t\! Q_{1y} $ as semiclassical Fourier integral distribution  corresponding to the Lagrangian manifold 
$\Lambda = \{(x,\xi ,t; y, -\eta,\tau )\colon (y,\eta) =\Phi _t(x,\xi),\ \tau =a(x,\xi)\}$
where  $\Phi_t=\Phi_t(x,y) $ denotes a Hamiltonian flow generated by $a(x,\xi)$.
\end{claim}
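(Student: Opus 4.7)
The statement has two parts: a geometric assertion about where the trajectories live, and a microlocal structural description of $u_h\,^t\!Q_{1y}$. The plan is to deduce the former from conservation of angular momentum combined with the energy constraint imposed by the band $|\tau|\le \epsilon'$ that is implicit from Proposition~\ref{prop-2.2}, and then to build the Fourier integral representation by the standard WKB / geometric optics construction, which is legitimate precisely because (by the first part) all relevant bicharacteristics stay in a region where the full symbol $a(x,\xi)=\tfrac12|\xi|^2-|x|^{-2\alpha}$ is smooth.

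First I would fix an energy window $|\tau|\le \epsilon'$ and note that along any bicharacteristic of $a$, both the energy $a(x,\xi)$ and the angular momentum tensor $M_{jk}=x_j\xi_k-x_k\xi_j$ are constants of motion (the latter because the potential is radial, equivalently because $x_jD_k-x_kD_j$ commutes with $A^0_h$). On $\supp(q_1)$ one has $|x\times\xi|\ge(1+\epsilon)\rho$ and $\tfrac12|\xi|^2\le|x|^{-2\alpha}+\epsilon'$. Writing $|x\times\xi|^2\le |x|^2|\xi|^2\le 2|x|^{2-2\alpha}+2\epsilon'|x|^2$ and propagating this along the flow, I would extract
\begin{gather*}
|x(t)|^{2-2\alpha}\ge \tfrac12(1+\epsilon)^2\rho^2-\epsilon'|x(t)|^2,
\end{gather*}
which, after choosing $\epsilon'$ sufficiently small relative to $\rho$, forces $|x(t)|\ge\epsilon''$ uniformly along the trajectory, for some $\epsilon''=\epsilon''(\rho,\epsilon)>0$.

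Having confined the flow to $\{|x|\ge\epsilon''\}$, the symbol $a$ is $\sC^\infty$ on the relevant piece of phase space, and the Hamiltonian flow $\Phi_t$ is well defined and smooth there for all $|t|\le T_0$. Then I would construct $u_h(x,y,t)\,^t\!Q_{1y}$ as a semiclassical Fourier integral distribution by the usual procedure: solve the eikonal equation $\partial_t S+a(x,\partial_x S)=0$ with initial data parametrizing $\supp(q_1)$, transport the principal symbol along the Hamiltonian vector field $H_a$, and iterate to kill successive powers of $h$ modulo $O(h^\infty)$. The resulting Lagrangian is exactly $\Lambda=\{(x,\xi,t;y,-\eta,\tau):(y,\eta)=\Phi_t(x,\xi),\ \tau=a(x,\xi)\}$, the twisted graph of the flow intersected with the energy surface. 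Uniqueness of the Cauchy problem for $hD_t-A^0_h$ modulo $O(h^\infty)$ identifies this parametrix with $u_h\,^t\!Q_{1y}$ up to a negligible error.

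The main obstacle is not the construction itself (which is entirely standard once the flow is in the smooth region), but controlling possible caustics of $\Lambda$ over the base, which is unavoidable here because Claim~(\ref{eqn-2.4}) records multiple self-intersections of trajectories. I would handle this in the standard way: represent $\Lambda$ locally by non-degenerate phase functions with auxiliary parameters (Maslov-type parametrization) on a finite cover by conic neighborhoods, and sum the local pieces. Since the $t$-interval is short compared to the minimal return time $T_*(x)\asymp |x|^{1+\alpha}\gtrsim (\epsilon'')^{1+\alpha}$ from the dynamics computed in Subsection~\ref{sect-2.1}, only finitely many sheets of $\Lambda$ can be active, and the resulting FIO is well-defined, which is all that (\ref{eqn-2.27}) asserts.
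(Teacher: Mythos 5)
The paper states (\ref{eqn-2.27}) without proof, as a standard fact to be taken for granted; your argument correctly supplies the two ingredients it implicitly relies on, namely confinement of the flow away from the origin via conservation of angular momentum together with the energy restriction $|\tau|\le\epsilon'$ (equivalently, the pericentre radius $r_{\min}\asymp\rho^{1/(1-\alpha)}$ attached to angular momentum $\ge(1+\epsilon)\rho$), and the standard WKB/geometric-optics parametrix construction with Maslov-type local parametrizations at caustics, identified with $u_h\,^t\!Q_{1y}$ by uniqueness of the semiclassical Cauchy problem for $hD_t-A_h^0$. This is exactly the standard route, so the proposal is correct and in line with the paper.
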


\begin{proposition}\label{prop-2.4}
Consider toy-model operator \textup{(\ref{eqn-1.1})} with $d\ge2$. Let   $Q_1=q_1^\w(x,hD)$ be operator with the symbol satisfying \textup{(\ref{eqn-2.24})}.

\begin{enumerate}[wide, label=(\roman*), labelindent=0pt]
 \item\label{prop-2.4-i}
Then for $T\colon \epsilon_0\le T\le T^*$
\begin{gather}
|F_{t\to h^{-1}\tau} \chi_T(t) u_h (x,y,t) \,^t\! Q_{1y}  | \le C  h^{\frac{3}{2}-d} \qquad \forall\tau\colon |\tau|\le \epsilon'.
\label{eqn-2.28}
\end{gather}
 \item\label{prop-2.4-ii}
Furthermore, if $|\x\times y|\ge \rho$ then
 \begin{gather}
|F_{t\to h^{-1}\tau} \chi_T(t) u_h (x,y,t) \,^t\! Q_{1y}  | \le C h^{\frac{1}{2}(1-d) }  \qquad \forall\tau\colon |\tau|\le \epsilon'.
\label{eqn-2.29}
\end{gather}
\end{enumerate}
\end{proposition}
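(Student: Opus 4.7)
The plan is to combine the semiclassical Fourier integral description of $u_h(x,y,t)\,^t\!Q_{1y}$ from (\ref{eqn-2.27}) with a stationary phase analysis. Since $Q_1$ microlocalizes to $\{|x\times\xi|\ge(1+\epsilon)\rho\}$, the angular momentum is bounded below on its support, and by (\ref{eqn-2.27}) every Hamiltonian trajectory meeting its microlocal support remains inside $\{|x|\ge\epsilon''\}$ for $|t|\le T^*$. On this region the flow $\Phi_t$ is a smooth symplectomorphism, so microlocally one may write
$$u_h(x,y,t)\,^t\!Q_{1y}\equiv(2\pi h)^{-d}\int e^{i\Phi(x,y,t,\xi)/h}\,b(x,y,t,\xi;h)\,d\xi$$
modulo $O(h^\infty)$. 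Applying $F_{t\to h^{-1}\tau}$ and cutting off by $\chi_T$ then produces an oscillatory integral in the $d+1$ variables $(\xi,t)$ with phase $\Phi-t\tau$, whose stationary set consists exactly of those pairs $(\xi,t)$ for which $\Phi_t(x,\xi)$ ends at $y$ on the energy shell $\{a=\tau\}$, i.e.\ the Hamiltonian trajectories from $x$ to $y$ of duration $t$.

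For \ref{prop-2.4-ii}, the hypothesis $|\x\times y|\ge\rho$ forces every such trajectory to lie in the $2$-plane through $0,x,y$ by (\ref{eqn-2.16}), so the set of critical points is finite (bounded by the $2n$ count of (\ref{eqn-2.7})). Transverse non-degeneracy at each critical point is verified via the explicit Kepler parametrization (\ref{eqn-2.8})--(\ref{eqn-2.10}): varying the angular momentum moves the endpoint off $y$, while varying $t$ moves it along the tangent of the trajectory, and these directions are independent precisely because $x$ and $y$ are not collinear. Standard stationary phase in $d+1$ variables at each isolated critical point contributes a factor $h^{(d+1)/2}$, which combined with the $(2\pi h)^{-d}$ prefactor produces the claimed bound $h^{\frac{1}{2}(1-d)}$.

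For \ref{prop-2.4-i} the remaining (and worst) case is $|\x\times y|<\rho$ with $d\ge 3$. By (\ref{eqn-2.15}), the rotational symmetry about the axis through $0,x,y$ now produces, for each trajectory type, a smooth $(d-2)$-parameter family of trajectories from $x$ to $y$, so the stationary set in $(\xi,t)$ becomes a union of compact $(d-2)$-dimensional submanifolds. Stationary phase on a non-degenerate critical manifold of dimension $d-2$ inside $\bR^{d+1}$ contributes $h^{((d+1)-(d-2))/2}=h^{3/2}$, which yields the claimed bound $h^{\frac{3}{2}-d}$. The main obstacle is verifying transverse non-degeneracy along this critical manifold: the tangential directions along the critical set are exactly those generated by the rotational symmetry, while in every transverse direction the Hessian inherits its non-vanishing from the $d=2$ analysis of the previous paragraph applied inside the $2$-plane containing the symmetry axis, giving a clean Bott-type stationary phase reduction and the stated bound.
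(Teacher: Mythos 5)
Your approach matches the paper's: both start from the FIO description in (\ref{eqn-2.27}), pass to the oscillatory integral in the $d+1$ variables $(\xi,t)$ after the Fourier transform in $t$, and classify the stationary set via the Hamiltonian geometry. The paper encodes the dimension count through the rank conditions (\ref{eqn-2.30})--(\ref{eqn-2.31}) on $d_{\xi,t}\,\uppi_x\Phi_t(\bar{x},\xi)$ and leaves their verification to the reader; your Bott-type stationary phase, with a $(d-2)$-dimensional (resp.\ $0$-dimensional) critical manifold giving the gain $h^{3/2}$ (resp.\ $h^{(d+1)/2}$) against the $(2\pi h)^{-d}$ prefactor, is exactly that content made explicit.

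One shared elision worth noting: your use of (\ref{eqn-2.15}) applies only at exact collinearity $x\times y=0$, while (\ref{eqn-2.28}) is asserted uniformly including the regime $0<|x\times y|<\rho$, where by (\ref{eqn-2.16}) the critical points are isolated but the Hessian degenerates as $|x\times y|\to 0$. The uniform constant in $Ch^{3/2-d}$ there rests on a uniform lower bound for the relevant rank-three minors of the Hessian (a quantitative form of (\ref{eqn-2.30})--(\ref{eqn-2.31})), which neither your argument nor the paper spells out.
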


\begin{proof}
Let us fix $(\bar{x}, \bar{\xi})$ and assume that $\Phi _t (\bar{x},\bar{\xi })=(\bar{y},\bar{\eta})$  for $t=\bar{t}\ne 0$.
Consider in the vicinity of $(\bar{\xi},\bar{t})$ the set $\{\xi  \colon \Phi_t(\bar{x}, \xi )=\bar{y}\}$. We claim that
\begin{claim}\label{eqn-2.30}
If $\bar{x}\times \bar{y}=0$ then  $\rank (d_{\xi ,t } \uppi _x  \Phi_t(\bar{x}, \xi ))|_{\xi =\bar{\xi },t=\bar{t}} = 2$ 
\end{claim}
and
 \begin{claim}\label{eqn-2.31}
 If $\bar{x}\times \bar{y} \ne 0$ then $\rank (d_{\xi ,t} \uppi _x  \Phi_t(\bar{x}, \xi ))|_{\xi =\bar{\xi },t=\bar{t}}= d$.
 \end{claim}
 The easy proof of these claims  we leave to the reader. 

Then combined (\ref{eqn-2.27}) with (\ref{eqn-2.30}) and (\ref{eqn-2.31})  imply Statements \ref{prop-2.4-i} and \ref{prop-2.4-ii} respectively.
\end{proof}

\begin{remark}\label{rem-2.5}
 \begin{enumerate}[wide, label=(\roman*), labelindent=0pt]
 \item\label{rem-2.5-i}
Let $0<\alpha \le \frac{1}{2}$. Then the lack of self-intersections as $|x\times \xi |>0$ implies that if $Q_1=q_1^\w (x, hD)$ satisfies (\ref{eqn-2.24}) then for  $T\colon \epsilon_0\le T\le T^*$
\begin{gather}
|F_{t\to h^{-1}\tau} \chi_T(t) u_h (x,y,t) \,^t\! Q_{1y} \bigr|_{x=y} | \le C_T h^{s} \qquad \forall\tau\colon |\tau|\le \epsilon'.
\label{eqn-2.32}
\end{gather}
 \item\label{rem-2.5-ii}
Let $\frac{1}{2}<\alpha <1$. Then the lack self-intersections as $0< |x\times \xi | <(1+\epsilon)\rho $ with small enough constant $\rho >0$ implies that if $Q_1=q_1^\w (x, hD)$ satisfies 
\begin{gather}
\supp(q_1)\subset \{ (x,\xi )\colon |x|\le 1,\     (1-\epsilon)\rho' < |x\times \xi |  < (1+\epsilon)\rho \}
\label{eqn-2.33}
\end{gather}
with sufficiently small constant $\rho>0$ and arbitrarily small constant $\rho'\colon 0<\rho'<\rho$ then (\ref{eqn-2.32}) holds.
 \item\label{rem-2.5-iii}
Let $h^{\delta'} < \rho' <\rho \le \epsilon'$ with sufficiently small exponent $\delta'>0$. Then we still can apply  standard microlocal methods and prove Proposition~\ref{prop-2.3}. Furthermore, we can use  long-range propagation\footnote{\label{foot-1} See Subsection~\ref{monsterbook-sect-2-4-2-1},    \cite{monsterbook}.}\footnote{\label{foot-2} Because $D\Phi_t $ has a polynomial growth.}   and conclude that singularities  propagate along Hamiltonian trajectories. Then Statements \ref{rem-2.5-i} and \ref{rem-2.5-ii} of this Remark hold.
\end{enumerate}
\end{remark}

Now we need to consider the same expressions albeit with $q_1$ satisfying (\ref{eqn-2.33}) with $h^{1-\delta} <\rho' <\rho <h^{\delta'}$. We want to to prove first that the propagation is confined to the strip 
 \begin{gather*}
 \{(x,\xi)\colon  (1-\epsilon)\rho <  |x \times \xi| < (1+\epsilon)\rho \}
 \end{gather*} and  that there it happens along Hamiltonian trajectories.
 The most precise results (with the least restrictions on $\rho$) can be achieved in spherical coordinates $x=(\theta,r)\in \bS^{d-1}\times \bR^+$. Let $\xi=(\vartheta, \varrho)\in T_\theta^* \bS^{d-1}\times \bR$ be dual coordinates in the phase space. Then in $\sL^2(\bS^{d-1}\times \bR^+)$ operator is transformed to
 \begin{gather}
 A= h^2\Bigl(D_r^2 + \frac{1}{4} (d-1)(d-3)  r^{-2} + r^{-2} \Delta _\theta\Bigr) + V(r)
 \label{eqn-2.34}
 \end{gather}
 where $\Delta_\theta$ is a positive Laplacian on $\bS^{d-1}$. Then instead of Proposition~\ref{prop-2.3} we have
 
\begin{proposition}\label{prop-2.6}
Consider toy-model operator \textup{(\ref{eqn-1.1})}. Let   $Q_l=q_l^\w (\theta, hD_\theta)$ be operators such that
\begin{align}
&\supp(q_1)\subset \{ (x,\xi )\colon \ |\vartheta |   \ge (1+\epsilon)\rho \}
\label{eqn-2.35}\\
 \shortintertext{and}
&\supp(q_2)\subset  \{(x,\xi )\colon |x|\le 1,\  \ |\vartheta | \le (1-\epsilon)\rho  \}
 \label{eqn-2.36}
 \end{align}
where  $ h^{1-\delta}\le \rho \le h^{\delta'}$.  Then for $x=(\theta,r)$, $y=(\theta',r')$ and $T\ge \epsilon_0$
\begin{gather}
|F_{t\to h^{-1}\tau}\bar{\chi}_T(t)Q_{2x}u_h (\x,y,t) \,^t\! Q_{1y}  | \le C_T h^s \qquad \forall\tau\colon |\tau|\le \epsilon'.
\label{eqn-2.37}
\end{gather}
\end{proposition}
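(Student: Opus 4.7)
The approach is to exploit the exact rotational symmetry of $A$. Inspecting \textup{(\ref{eqn-2.34})}, every summand of $A$ commutes with $h^2\Delta_\theta$: the operators $h^2 D_r^2$, $\tfrac{1}{4}(d-1)(d-3)r^{-2}$ and $V(r)$ act only in the radial variable, and $h^2 r^{-2}\Delta_\theta$ trivially commutes with $h^2\Delta_\theta$. Hence $[A,h^2\Delta_\theta]=0$, and by functional calculus $[A,P]=0$ for
\begin{gather*}
P\coloneqq p(h^2\Delta_\theta/\rho^2),
\end{gather*}
where $p\in\sC^\infty(\bR)$ is chosen with $p(s)=0$ for $s\le (1-\tfrac{1}{2}\epsilon)^2$ and $p(s)=1$ for $s\ge (1+\tfrac{1}{2}\epsilon)^2$. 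Since $P$ commutes with $A$, its action on the Schwartz kernel of $e^{ith^{-1}A}$ satisfies
\begin{gather*}
P_x\,u_h(x,y,t)={}^t\!P_y\,u_h(x,y,t)\qquad\forall t.
\end{gather*}

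Using this identity, I would split
\begin{gather*}
Q_{2x}\,u_h(x,y,t)\,{}^t\!Q_{1y}=Q_{2x}P_x\,u_h\,{}^t\!Q_{1y}+Q_{2x}\,u_h\,{}^t\!\bigl(Q_1(I-P)\bigr)_y.
\end{gather*}
The remaining factors $Q_2 P$ and $Q_1(I-P)$ act on the angular variables only; their principal symbols, $q_2(\theta,\vartheta)\,p(|\vartheta|^2/\rho^2)$ and $q_1(\theta,\vartheta)\,(1-p(|\vartheta|^2/\rho^2))$ respectively, vanish identically, since by construction the $\vartheta$-supports are disjoint with a gap of size $\tfrac{1}{2}\epsilon\rho$.

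To turn this disjointness into an $O(h^\infty)$ statement uniformly in $\rho\ge h^{1-\delta}$, I would rescale angular momentum by $\vartheta'=\vartheta/\rho$ and introduce the effective semiclassical parameter $\hbar\coloneqq h/\rho\le h^\delta$. In the $\hbar$-calculus on $\bS^{d-1}$, the rescaled symbols $\tilde q_j(\theta,\vartheta')\coloneqq q_j(\theta,\rho\vartheta')$ are smooth with $\vartheta'$-derivatives of $O(1)$ size, and semiclassical functional calculus of the spherical Laplacian identifies $P=p(\hbar^2\Delta_\theta)$ as an $\hbar$-pseudodifferential operator with principal symbol $p(|\vartheta'|^2)$. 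Disjoint-support composition in this calculus gives $O(\hbar^\infty)=O(h^\infty)$; pairing with $\bar\chi_T$ and $F_{t\to h^{-1}\tau}$ contributes only polynomial factors in $T$ absorbed into $C_T$, yielding \textup{(\ref{eqn-2.37})}.

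The hard part will be the uniform-in-$\rho$ bookkeeping of the $\hbar$-pseudodifferential character of $P$ and the composition remainders as $\rho$ approaches the critical scale $h^{1-\delta}$. This reduces to a Helffer--Robert style semiclassical functional calculus on $\bS^{d-1}$ performed uniformly in the rescaled parameter $\hbar=h/\rho$; the hypothesis $\hbar\le h^\delta$ with $\delta>0$ is precisely what guarantees that each power of $\hbar$ in the composition expansion costs at least one factor of $h^\delta$, so that after sufficiently many iterations the remainder is $O(h^N)$ for any prescribed $N$.
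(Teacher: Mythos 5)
Your proof is correct and is exactly the argument the paper is invoking: Proposition~\ref{prop-2.3} already credits commutation of the angular momentum operators with $A$, and Proposition~\ref{prop-2.6} is its rescaled version with $\hbar=h/\rho\le h^{\delta}$, which is precisely the functional-calculus/disjoint-support argument you carry out. The paper's terse ``standard elliptic microlocal arguments'' amounts to the same thing: $h^2\Delta_\theta$ is conserved, so the propagator cannot move angular momentum across the gap between $\supp q_1$ and $\supp q_2$, and the rescaling $\vartheta\mapsto\vartheta/\rho$ keeps the calculus uniformly semiclassical down to $\rho\ge h^{1-\delta}$.
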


\begin{proof}
Proof is by the standard  elliptic microlocal arguments. 
\end{proof}

Next we want to prove that propragation is along the Hamiltonian trajectories. Since along such trajectory the minimum of $|x$ is $\asymp r(\rho)\coloneqq \rho ^{1/(1-\alpha)}$ we need to assume that $r(\rho)\gtrsim r_*=h^{1/(1-\alpha)-\delta}$ that is $\rho \ge h^{1-\delta}$
 with an arbitrarily small exponent $\delta>0$. This assumption we already made. 

As $r \asymp r(\rho)$ standard scaling  and long-range propagation results\footref{foot-1}  imply this if we take (after scaling) 
$\cW$ which is $\hbar^{\frac{1}{2}-\delta'}$-vicinity by $(y,\eta)$ singularities there for time $t\colon : |t|\le T= \hbar^{-\delta''}$ propagate to $\Phi_t (\cW)$. Here $\hbar= h\rho^{-1}$\,\footnote {\label{foot-3} Since scaling translates $x\to x^{-1}r^{-1}(\rho)$, $\xi \to \xi r^\alpha (\rho)$ and $h \to \hbar = h r^{-1+\alpha}(\rho)= h\rho^{-1}$.}.

Therefore we need to explore propagation from $(y,\eta)$ to $(x,\xi)$ and from  $(x,\xi)$ to $(y,\eta)$ assuming that $|y|\asymp 1$, $|\eta|\asymp 1$, $|x|\asymp r(\rho) \hbar^{-\delta'}$ and $|\xi|\asymp |x|^{-\alpha}$. 

Consider first corresponding Hamiltonian flow in the spherical coordinates. For fixed $\rho=|\vartheta|$ we have
\begin{multline}
\frac{dr}{dt}=   \varrho= \bigl(r^{-2\alpha}+2\tau -r^{-2}\rho^2\bigr)^{\frac{1}{2}}= 
 r^{-\alpha} \bigl(1+2r^{2\alpha}\tau -r^{2\alpha -2}\rho^2\bigr)^{\frac{1}{2}}\\[3pt]
\implies t = (1+\alpha)^{-1}r^{1+\alpha} + O\bigl(r^{3\alpha+1} |\tau|+ r^{3\alpha-1} \rho^2 \bigr)
\label{eqn-2.38}
\end{multline}
as long as 
\begin{gather}
C_0 r (\rho) \le r\le \epsilon_0 \tau_-^{-1/2\alpha};
\label{eqn-2.39}
\end{gather}
we count $t$ from the point with the minimal $|x|$. On the other hand,  we see that in the framework of (\ref{eqn-2.39})
\begin{gather}
\Phi _t (x,\xi) =  (r(t), \varrho(t); \Psi _s (\theta,\vartheta/\rho))
\label{eqn-2.40}
\end{gather}
where $\Psi_s$ is the geodesic flow on the $\bS^*(\bS^{d-1})$ and $s=s(t)$ 
\begin{multline}
ds = \rho r^{-2}dt= \rho r^{-2+\alpha}  \bigl(1+2r^{2\alpha}\tau -r^{2\alpha -2}\rho^2\bigr)^{-\frac{1}{2}} dr \\[3pt]
\implies
s =  s_0 -(1-\alpha)^{-1} \rho r^{-1+\alpha}  + O\bigl( r^{3\alpha-1}\rho  |\tau|+ r^{3\alpha-3} \rho^3\bigr) .
\label{eqn-2.41}
\end{multline}

\begin{remark}\label{rem-2.7}
According to the proof of long-term propagation theorem\,\footref{foot-2} 
\begin{gather}
e^{i h^{-1}tA_h}Q  = Q(t) e^{ih^{-1}tA_h} 
\label{eqn-2.42}
\end{gather}
provided $Q =q ^\w (x,hD)$ with the symbol satisfying 
\begin{gather}
\supp(q )\subset \{(x,\xi)\colon 1\le |x|\le 2,\ 1 \le |\xi|\le 2, \ \langle x,\xi\rangle >0\},
\label{eqn-2.43}\\[3pt]
|\partial_x^\mu \partial_\xi ^\nu q |\le C_{\mu\nu} h^{-\frac{1}{2} (1-\delta' ) (|\mu|+\nu|)},
\label{eqn-2.44}\\[3pt]
| D\Phi _t|\le h^{-\delta''} \quad\text{and} \quad T\le h^{-\delta''}
\label{eqn-2.45}
\end{gather}
with arbitrarily small exponent $\delta'>0$ ($\langle x,\xi\rangle >0$ means that the  propagation goes in the direction away from the origin). Here $Q( t)= q^\w (x,hD_x;t)$ is the solution of
\begin{gather}
Q_t (t) -ih^{-1}[A_h, Q(t)]=0
\label{eqn-2.46}\\
\intertext{and its symbol $q (t)$ satisfies}
|\partial_x^\mu \partial_\xi ^\nu q_2|\le C_{\mu\nu} h^{-\frac{1}{2}(1-\delta''' )(|\mu|+\nu|)}
\label{eqn-2.47}\\
\intertext{with $\delta''=\delta''(\delta')>0$,  $\delta'''=\delta'''(\delta')>0$ and}
\supp q (t) =\Phi_t (\supp (q )).
\label{eqn-2.48}
\end{gather}
\end{remark}

Applying  the  standard scaling we conclude that for symbol $q_n$ supported in 
$\{(x,\xi)\colon |x|\asymp r_n, |\xi| \asymp r_n^{-\alpha} \}$ with  $r_n\ge r_*$ we can  take  $T_n= r_n^{(1+\alpha)}h_n ^{-\delta''}$  with $h_n= hr_n^{-1+\alpha}$ and thus we arrive to a symbol $q'_{n+1}= q'(t_{n+1}-t_n)$ supported in
$\{(x,\xi)\colon |x|\asymp r_{n+1}, \ |\xi|\asymp r_{n+1}^{-\alpha}\}$ with $r_{n+1} = r_n h_n^{-\delta_4}$, so 
$h_{n+1} = h_n ^{1+\delta_5}$.

Based on (\ref{eqn-2.38})--(\ref{eqn-2.41}) we see that if $q$ satisfies in the spherical coordinates after corresponding rescaling (\ref{eqn-2.44}) then $q(t)$ satisfies in the corresponding scaling (\ref{eqn-2.44}) again, with \underline{the same exponent $\delta'$}.
Therefore with  $N=N(\delta, \delta_5)$ jumps we can go from $r_1\asymp r(\rho)$ to $r_N\asymp 1$. 

We want to do it in order to have
\begin{multline}
e^{ih^{-1}tA_h} Q_1 \equiv  e^{ih^{-1}tA_h} Q'_1Q_1 \equiv e^{ih^{-1}(t-T_1)A_h} Q_2 e^{ih^{-1}T_1}Q_1\\
\equiv  e^{ih^{-1}(t-T_1)A_h} Q'_2 Q_2 e^{ih^{-1}T_1}Q_1 \equiv 
e^{ih^{-1}(t-T_1-T_2)A_h} Q_3 e^{ih^{-1}T_2A_h} Q_2 e^{ih^{-1}T_1}Q_1\quad\  \\
\equiv 
Q'_{N+2}(t)e^{ih^{-1}(t-T_1-T_2-\ldots -T_N)A_h} Q_{N+1} e^{ih^{-1}T_NA_h}\cdots Q_3 e^{ih^{-1}T_3A_h} Q_2 e^{ih^{-1}T_1}Q_1
\label{eqn-2.49}
\end{multline} 
with $Q_n= q_n^\w (x,hD)$, $Q'_n= q_n^{\prime \w} (x,hD)$ with symbols satisfying after rescalings (\ref{eqn-2.43}) and (\ref{eqn-2.47}) respectively and $(I-Q'_n)Q_n \equiv 0$. 

Finally,  $|t-T_1-\ldots -T_N|\le T_{N+1}$. This  equality (\ref{eqn-2.49}) will imply the required propagation result, but we need to look at $\supp (q_n)$ and $\supp (q'_n)$. 

\begin{remark}\label{rem-2.8}
As mentioned, we do it in the spherical coordinates $(r,\theta)$. This will lead to the condition
\begin{gather}
\rho \ge h^{\frac{1}{2}-\delta}
\label{eqn-2.50}\\
\shortintertext{which is stronger than the original assumption} 
\rho \ge h^{1-\delta}.
\label{eqn-2.51}
\end{gather}
\end{remark}

\begin{proposition}\label{prop-2.9}
Consider toy-model operator \textup{(\ref{eqn-1.1})}.  Let $(\bar{x},\bar{\xi}) =\Psi_t (\bar{y}, \bar{\eta})$ and 
\begin{gather}
(1-\epsilon)\rho \le |\bar{x}\times \bar{\xi}|= |\bar{y}\times \bar{\eta}| =(1+\epsilon) \rho
\label{eqn-2.52} 
\end{gather}
with $\rho \ge h^{\frac{1}{2}-\delta}$. 
Let $Q_1=q_1^\w (x,hD)$ be operator with the symbol $q_1$ supported in $\hbar^{\frac{1}{2}-\delta'}(r_y,r_y^{-\alpha})$–vicinity of $(\bar{y},\bar{\eta})$ and $Q_2=q_2^\w (x,hD)$ be an operator with the symbol $q_2$ equal $1$ in 
$C_0\hbar^{\frac{1}{2}-\delta'}(r_x,r_x^{-\alpha})$–vicinity of $(\bar{x},\bar{\xi})$, where $r_x=|\bar{x}|$ and $r_y=|\bar{y}|$. 

Then
\begin{gather}
e^{ih^{-1}tA_h}Q_1 \equiv Q_2e^{ih^{-1}tA_h}Q_1. 
\label{eqn-2.53}
\end{gather}
\end{proposition}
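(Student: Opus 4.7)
The plan is to implement the scheme advertised in Remark~\ref{rem-2.7} and the chain~\textup{(\ref{eqn-2.49})}, working throughout in spherical coordinates $(r,\theta;\varrho,\vartheta)$ on the phase space $T^*(\bS^{d-1}\times\bR^+)$, where the angular momentum $|\vartheta|=|x\times\xi|$ is conserved by the Hamiltonian flow generated by the symbol of the operator~\textup{(\ref{eqn-2.34})}. I will first connect the endpoints to the turning arc: on the energy level $0$, (\ref{eqn-2.38})--(\ref{eqn-2.41}) imply that the trajectory through $(\bar{y},\bar{\eta})$ reaches its pericenter at radius $\asymp r(\rho)=\rho^{1/(1-\alpha)}$; under the standing assumption $\rho\ge h^{\frac12-\delta}$ this pericenter still satisfies $r(\rho)\ge r_*=h^{1/(1-\alpha)-\delta}$, so long-range propagation is available at every intermediate scale.

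Next I would build the ladder of dyadic radial scales: set $r_1\asymp r(\rho)$, and iterate $r_{n+1}=r_n h_n^{-\delta_4}$ with $h_n=hr_n^{-1+\alpha}$ so that $h_{n+1}=h_n^{1+\delta_5}$, reaching $r_N\asymp 1$ in finitely many steps $N=N(\delta,\delta_5)$. At each scale I rescale $x\mapsto xr_n^{-1}$, $\xi\mapsto\xi r_n^{\alpha}$, $t\mapsto tr_n^{-(1+\alpha)}$, $h\mapsto h_n$, reducing the situation to Remark~\ref{rem-2.7}, applied to an operator $Q_n=q_n^\w(x,hD)$ whose (rescaled) symbol satisfies (\ref{eqn-2.43})--(\ref{eqn-2.44}), for a propagation time $T_n=r_n^{1+\alpha}h_n^{-\delta''}$. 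The output is an operator $Q'_{n+1}(t)$ with symbol obtained by transporting $q_n$ along $\Phi_{T_n}$, still in the class (\ref{eqn-2.47}) \emph{with the same loss exponent $\delta'$}; composing with a full cut-off $Q_{n+1}$ at the next scale (so that $(I-Q_{n+1})Q'_{n+1}\equiv 0$) gives the next factor in (\ref{eqn-2.49}). Iterating from $r_1$ up to $r_N\asymp 1$ on the ingoing leg from $(\bar{y},\bar{\eta})$, and then, after a single long-range step across $|t|\le T=\hbar^{-\delta''}$ near the turning point, iterating down along the outgoing leg to $(\bar{x},\bar{\xi})$, produces a factorization of $e^{ih^{-1}tA_h}Q_1$ whose final left factor is supported, up to negligible error, in the $C_0\hbar^{\frac12-\delta'}(r_x,r_x^{-\alpha})$-vicinity of $(\bar{x},\bar{\xi})$. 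Since $Q_2\equiv 1$ on that set, (\ref{eqn-2.53}) follows.

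The main obstacle is bookkeeping of the symbol supports and symbol classes through the chain (\ref{eqn-2.49}), and in particular checking that the angular loss accumulated from (\ref{eqn-2.41}) stays controlled. Because in spherical coordinates the effective semiclassical parameter in the angular direction is $\hbar/\rho$ rather than $\hbar$, an $\hbar^{\frac12-\delta'}$-neighbourhood in $\vartheta$ requires $\rho\ge h^{\frac12-\delta}$ (rather than merely $\rho\ge h^{1-\delta}$) in order to satisfy the symbol-calculus hypothesis (\ref{eqn-2.44}) uniformly along the ladder; this is exactly the threshold flagged in Remark~\ref{rem-2.8}. Verifying that the pericenter passage preserves this class, and that the transported symbol of Remark~\ref{rem-2.7} glues consistently with the next cut-off $Q_{n+1}$ on the outgoing leg, is the technical heart of the argument; once this is in place, the composition telescopes and yields (\ref{eqn-2.53}) modulo $O(h^s)$.
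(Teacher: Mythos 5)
Your proposal follows essentially the same route as the paper: spherical coordinates, the pericenter radius $r(\rho)=\rho^{1/(1-\alpha)}$, the ladder of dyadic radial scales from~(\ref{eqn-2.49}), scale-by-scale application of the long-range propagation result of Remark~\ref{rem-2.7}, and the identification of $\rho\ge h^{\frac12-\delta}$ as the threshold coming from the pericenter where $\hbar=h/\rho$. The paper's own proof is organized as a three-case split according to whether $|\uppi_x\Psi_{t'}|$ is monotone increasing (yielding $\hbar\ge h^{1-\delta}r_y^{\alpha-1}$), monotone decreasing ($\hbar\ge h^{1-\delta}r_x^{\alpha-1}$), or decreases then increases through a pericenter (the composition of the two, yielding the strictest constraint $\hbar\ge h^{1-\delta}\rho^{-1}$); you only treat the pericenter case explicitly, which is the hardest and subsumes the structure of the other two, so that is not a real gap. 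One bookkeeping slip: you write ``iterating from $r_1$ up to $r_N\asymp 1$ on the ingoing leg from $(\bar{y},\bar{\eta})$, and then \ldots iterating down along the outgoing leg to $(\bar{x},\bar{\xi})$'' --- the ingoing leg has the radius \emph{decreasing} from $r_y$ to $r(\rho)$, and the outgoing leg has it \emph{increasing} from $r(\rho)$ to $r_x$, so the directions are reversed; the corrected picture also makes clear why the ingoing leg should be handled by time-reversal (or the adjoint), since Remark~\ref{rem-2.7}'s symbol class~(\ref{eqn-2.43}) is stated only for propagation away from the origin ($\langle x,\xi\rangle>0$).
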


\begin{proof}
 \begin{enumerate}[wide, label=(\roman*), labelindent=0pt]
 \item\label{pf-2.9-i}
Assume first that $|\uppi_x \Psi_{t'}|$ only increases as $t'$ changes from $0$ to $t$.  Consider  in the scaled spherical coordinates  
$\hbar^{\frac{1}{2}-\delta'}$-vicinity $\cU$ of $(\bar{y}, \bar{\eta})$. Then $\Psi_t(\cU)\subset \cV$ which is  
 $C_0\hbar^{\frac{1}{2}-\delta'}$-vicinity  of $\Phi_t (\bar{x},\bar{\xi})$ again in the scaled spherical coordinates.  
Combined with  (\ref{eqn-2.49}) implies (\ref{eqn-2.53}).  Here we can take any $\hbar\ge h^{1-\delta}r_y^{\alpha-1}$.

 \item\label{pf-2.9-ii}
 Similarly we can consider a case when $|\uppi_x \Psi_{t'}|$ only decreases as $t'$ changes from $0$ to $t$.
Here we can take any $\hbar\ge h^{1\delta}r_x^{\alpha-1}$.

 \item\label{pf-2.9-iii}
 Therefore such statement remains true if $|\uppi_x \Psi_{t'}|$ first decreases and then increases as $t'$ changes from $0$ to $t$.
 Here we must take $\hbar\ge h^{1-\delta}\rho^{-1}$. 
 \end{enumerate}
\end{proof}

\begin{proposition}\label{prop-2.10}
Consider toy-model operator \textup{(\ref{eqn-1.1})}.  Let  
\begin{gather}
|\bar{x}|=1,\qquad  a(\bar{x},\bar{\xi})=0, \qquad 0<\rho=|\bar{x}\times \bar{\xi}|\le \epsilon, 
\label{eqn-2.54} 
\end{gather}
with sufficiently small constant $\epsilon=\epsilon(\epsilon_0)>0$. Then
\begin{enumerate}[wide, label=(\roman*), labelindent=0pt]
 \item\label{prop-2.10-i}
If $(1-\alpha)^{-1}\notin 2\bZ$ then 
\begin{gather}
\min_{t\colon |t|\ge \epsilon_0} |\uppi_x \Psi_t (\bar{x},\bar{\xi})-\bar{x}|\asymp 1.
\label{eqn-2.55} 
\end{gather}

\item\label{prop-2.10-ii}
If $(1-\alpha)^{-1}\in 2\bZ$ then
\begin{gather}
\min_{t\colon |t|\ge \epsilon_0} |\uppi_x \Psi_t (\bar{x},\bar{\xi})-\bar{x}|\asymp \rho .
\label{eqn-2.56} 
\end{gather}
\end{enumerate}
\end{proposition}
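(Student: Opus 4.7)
The plan is to exploit the explicit orbit equation \textup{(\ref{eqn-2.8})} to compute the angular displacement between the two crossings of the unit sphere $|x|=1$, and then to check that this displacement governs $\min_{|t|\ge\epsilon_0}|\uppi_x\Psi_t(\bar x,\bar\xi)-\bar x|$.

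Because the orbit through $(\bar x,\bar\xi)$ lies in the $2$-plane through $0$, $\bar x$, $\bar\xi$, I work in polar coordinates $(r,\theta)$ on that plane with $\bar x=(1,\theta_0)$. By \textup{(\ref{eqn-2.8})} with $M=\rho$ the orbit reads $r^\omega = \rho^2/(1-\cos(\omega(\theta-\theta_c)))$ for some aphelion direction $\theta_c$; solving $r=1$ gives $\cos(\omega(\theta-\theta_c))=1-\rho^2$, whose two roots have physical angular separation
\begin{equation*}
\Delta\theta \;=\; \frac{2\pi-2\arccos(1-\rho^2)}{\omega} \;=\; 2\pi\omega^{-1} - 2\sqrt{2}\,\omega^{-1}\rho + O(\rho^3).
\end{equation*}
Writing $\bar x':=\uppi_x\Psi_{t^*}(\bar x,\bar\xi)$ for the second crossing (at $t^*\asymp 1$, WLOG forward in time), the point $\bar x'$ sits on the unit sphere at physical angular coordinate $\theta_0+\Delta\theta \bmod 2\pi$, so $|\bar x-\bar x'|=2|\sin((\Delta\theta\bmod 2\pi)/2)|$.

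The case split is then immediate. If $(1-\alpha)^{-1}=2\omega^{-1}\notin 2\bZ$, then $\omega^{-1}$ has positive fractional part, so $\Delta\theta\bmod 2\pi$ stays a definite distance from $0$ uniformly for small $\rho$, yielding $|\bar x-\bar x'|\asymp 1$. If $(1-\alpha)^{-1}\in 2\bZ$, then $\omega^{-1}\in\bZ$ makes the leading term $2\pi\omega^{-1}$ an integer multiple of $2\pi$, leaving $\Delta\theta\bmod 2\pi=-2\sqrt{2}\omega^{-1}\rho+O(\rho^3)$ and hence $|\bar x-\bar x'|\asymp\rho$. To pass from $|\bar x-\bar x'|$ to the actual infimum I argue: for $t\in[\epsilon_0,t^*-\epsilon_0]$ the orbit has $r(t)\le 1-c\epsilon_0$ uniformly in small $\rho$ (since $\dot r(0)=-\sqrt{2}$ and $\rho\ll\epsilon_0$), so $|\uppi_x\Psi_t-\bar x|\ge c\epsilon_0$; for $t<-\epsilon_0$ or $t>t^*+\epsilon_0$ the orbit is outside $B(0,1)$ with $r$ monotone, so the distance from $\bar x$ only grows; and near $t=t^*$ a first-order expansion using $|\bar\xi'|=\sqrt{2}$ and tangential component of size $\rho$ gives local minimum $|\bar x-\bar x'|(1+O(\rho^2))$. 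Fixing $\epsilon(\epsilon_0)$ small enough that $\rho\ll\epsilon_0$ guarantees that in case (ii) this $\rho$-sized return genuinely beats the $\epsilon_0$-sized contributions elsewhere.

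The main obstacle is simply the asymptotic expansion of $\arccos(1-\rho^2)=\sqrt{2}\rho+O(\rho^3)$ to the order that distinguishes cases (i) and (ii); everything else is routine geometric bookkeeping from \textup{(\ref{eqn-2.8})}.
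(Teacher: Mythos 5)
Your proposal is correct and follows the same essential route as the paper: you compute the rotation angle $\Delta\theta$ of the return map through the sphere $|x|=1$, identify the leading term as $\pi(1-\alpha)^{-1}$, and observe that the case split hinges on whether this is a multiple of $2\pi$. The paper states the leading-order rotation angle and then, for case (ii), argues geometrically via the curvature radius $\asymp\rho^2$ of the near-parabolic orbit near the origin (and its image under $r\mapsto r^n$, $\theta\mapsto n\theta$) to get a branch separation $\asymp\rho$; you instead extract the same $O(\rho)$ defect by Taylor-expanding $\arccos(1-\rho^2)$ in the explicit orbit equation \textup{(\ref{eqn-2.8})}, and you also spell out why the genuine infimum over $|t|\ge\epsilon_0$ is governed by that single near-return, which the paper leaves implicit. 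Both give the same estimate; your version is a bit more explicit and self-contained.
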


\begin{proof}
\begin{enumerate}[wide, label=(\roman*), labelindent=0pt]
 \item\label{pf-2.10-i}
 If $(1-\alpha)^{-1}\notin 2\bZ$ then the rotation angle when $\rho\to 0$ tends to $\pi (1-\alpha)^{-1}\notin 2\pi \bZ$, which implies (\ref{eqn-2.55}). 
 
  \item\label{pf-2.10-ii}
 If $(1-\alpha)^{-1}=2n$, $n\in \bZ$ then the rotation angle when $\rho\to 0$ tends to $2\pi n$. Further, if $n=1$ then the trajectory is a parabola with the curvature radius  $\asymp  r_\rho=\rho^2$ near $0$  and then on the distance $\asymp 1$ from $0$
 the distance between branches is $\asymp \rho$. For $n\ge 2$ instead of this parabola we get its image under transformation $r\mapsto r^n$, $\theta = n\theta$ (if $d=2$) and then the distance between branches $\asymp r_\rho ^{1/2n}  = \rho$ again. 

Thus we arrive to (\ref{eqn-2.56}). 
 \end{enumerate}
\end{proof}

\begin{proposition}\label{prop-2.11}
Consider toy-model operator \textup{(\ref{eqn-1.1})}.  Let  $Q =q^\w (x,hD)$ be an operator with the symbol $q$ supported in 
$\epsilon ''\rho$-vicinity of $(\bar{x},\bar{\xi})$ satisfying \textup{(\ref{eqn-2.54})}  with sufficiently small constant $\epsilon>0$. Then
\begin{enumerate}[wide, label=(\roman*), labelindent=0pt]
 \item\label{prop-2.11-i}
If $(1-\alpha)^{-1}\notin 2\bZ$ and $\rho \ge h^{\frac{1}{2}-\delta}$ then
\begin{multline}
F_{t\to h^{-1}\tau}\Bigl(\chi_T(t) u_h (x,y,t) \,^t\!Q_{1y}\Bigr)\Bigr|_{y=x}=O(h^s)\\ \forall T\in [ \epsilon _1, T_1]  \ \forall \tau \colon |\tau|<\epsilon_2.
\label{eqn-2.57} 
\end{multline}
with an arbitrarily small constant $\epsilon_1>0$, arbitrarily large constant $T_1$ and $\epsilon_2=\epsilon_2(T_1)$.

\item\label{prop-2.11-ii}
If $(1-\alpha)^{-1}\in 2\bZ$ and 
\begin{gather}
\rho \ge h^{\frac{1}{3}-\delta}
\label{eqn-2.58} 
\end{gather}
 then \textup{(\ref{eqn-2.57})} holds.
\end{enumerate}
\end{proposition}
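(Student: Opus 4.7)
The plan is to combine the long-range propagation framework of Remark~\ref{rem-2.7} and Proposition~\ref{prop-2.9} with the geometric dichotomy of Proposition~\ref{prop-2.10}: the propagated symbol has its support at $\Phi_t(\supp q_1)$ (with a controlled thickening), and one needs this support to avoid the diagonal $\{y=x\approx \bar{x}\}$ at the relevant times. First I would reduce the Fourier-in-$t$ estimate to a pointwise estimate: since $\chi_T$ is supported in $|t|\asymp T\in[\epsilon_1,T_1]$ with $T_1$ bounded, an $O(h^s)$ bound on the kernel for each such $t$ promotes to an $O(h^s)$ bound on its Fourier transform by standard arguments. So it suffices to show that $u_h(x,y,t)\,^t\!Q_{1y}\bigr|_{y=x}$ is microlocally trivial near $(\bar{x},\tau)$ for each $|t|\in[\epsilon_1,T_1]$ and $|\tau|<\epsilon_2$.

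Next I would cover $\supp q_1$ by $\hbar^{1/2-\delta'}$-cells, rescaled appropriately along the trajectory, and apply Proposition~\ref{prop-2.9} cell-by-cell: each cell propagates under $\Phi_t$ to a $C_0\hbar^{1/2-\delta'}$-vicinity of its image, so the union of propagated supports sits inside an $(\epsilon''\rho+\eta(h,\rho))$-neighborhood of $\uppi_x\Phi_t(\bar{x},\bar{\xi})$, where $\eta(h,\rho)$ is the aggregate spreading. Now Proposition~\ref{prop-2.10} enters: in case (i) we have $|\uppi_x\Phi_t(\bar{x},\bar{\xi})-\bar{x}|\asymp 1$, which dwarfs both $\epsilon''\rho\le\epsilon$ and $\eta(h,\rho)\le h^{1/2-\delta'}\ll 1$, so the propagated support is disjoint from $\supp q_1$, and microlocal ellipticity delivers the $O(h^s)$ bound. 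This case needs only $\rho\ge h^{1/2-\delta}$, exactly the hypothesis of Proposition~\ref{prop-2.9}.

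For case (ii) the classical separation drops to $\asymp \rho$, so the propagation uncertainty has to be tracked more carefully. Choosing $\epsilon''$ small absorbs the initial vicinity into a fraction of $\rho$; what remains is the spreading through the near-origin focus at $r\asymp r(\rho)=\rho^{1/(1-\alpha)}$, where the effective parameter becomes $\hbar=h\rho^{-1}$. Using the Kepler parabolic geometry of Subsection~\ref{sect-2.1}, and in particular the curvature radius $\asymp \rho^2$ at the focus in the resonant case $(1-\alpha)^{-1}\in 2\bZ$, and unscaling back to $|x|\asymp 1$, the resulting $x$-spreading is of order $h/\rho^2$ up to a harmless $h^{-\delta''}$ factor, so the condition $\eta\ll\rho$ yields precisely $\rho\ge h^{1/3-\delta}$. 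The main obstacle is this last step: pinning down the spreading through the caustic so that it actually matches the classical separation $\asymp\rho$ with the right exponent; once that is in hand, microlocal ellipticity closes the argument as before.
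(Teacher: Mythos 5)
Your overall strategy is the paper's: feed Proposition~\ref{prop-2.9} (propagation of supports with controlled spreading) into the dichotomy of Proposition~\ref{prop-2.10} and demand that the spreading be smaller than the classical separation of the loop from its base point. Case~\ref{prop-2.11-i} matches the paper exactly. The deviation is in case~\ref{prop-2.11-ii}: the paper does \emph{not} go back to the Kepler/caustic geometry to re-derive a spreading estimate. It simply reuses the width already furnished by Proposition~\ref{prop-2.9}, namely $\hbar^{\frac12-\delta'}$ with $\hbar=h/\rho$ (the unavoidable deterioration of the semiclassical parameter as the trajectory passes the radius $r(\rho)=\rho^{1/(1-\alpha)}$), and compares it directly with the separation $\asymp\rho$ from \textup{(\ref{eqn-2.56})}: the requirement $\hbar^{\frac12-\delta'}\le\epsilon_3\rho$, i.e.\ $(h/\rho)^{\frac12-\delta'}\le\epsilon_3\rho$, is precisely \textup{(\ref{eqn-2.58})}.

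Your alternative route through ``curvature radius $\asymp\rho^2$ at the focus'' and the claimed spreading $\sim h/\rho^2$ is the weak link you yourself flag. It is not derived from Proposition~\ref{prop-2.9}, the curvature value $\rho^2$ is specific to $n=1$ (i.e.\ $\alpha=\tfrac12$, which has no loops) while the resonant case needs $n\ge2$ and the relevant radius there is $r_\rho^{1/2n}$, and the formula $h/\rho^2$ does not coincide with the paper's $(h/\rho)^{\frac12-\delta'}$ except at the borderline $\rho\sim h^{1/3}$; the fact that both inequalities $h/\rho^2\le\rho$ and $(h/\rho)^{1/2}\le\rho$ reduce to $\rho\ge h^{1/3}$ is a coincidence at the threshold, not a verification of your spreading estimate. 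If you replace that heuristic by simply quoting the $\hbar^{\frac12-\delta'}$-vicinity guaranteed by Proposition~\ref{prop-2.9} (which is the same for resonant and non-resonant cases), the gap closes and you recover the paper's argument. Also note that one does not need the preliminary reduction from the Fourier estimate to a pointwise-in-$t$ kernel bound: the conclusion follows from microlocal triviality of $u_h\,^t\!Q_{1y}$ near the diagonal for $t\in\supp\chi_T$, exactly as in Propositions~\ref{prop-2.3}--\ref{prop-2.4}.
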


\begin{proof}
In virtue of Proposition~\ref{prop-2.9} we need to check that the left-side expression of (\ref{eqn-2.55}) is larger than 
$C\hbar^{\frac{1}{2}-\delta'}$. Recall that $\hbar= h/\rho$. 
\begin{enumerate}[wide, label=(\roman*), labelindent=0pt]
 \item\label{pf-2.11-i}
$(1-\alpha)^{-1}\notin 2\bZ$ we need to check $\hbar^{\frac{1}{2}-\delta'} \le \epsilon _3$ which follows from $\rho \ge h^{\frac{1}{2}-\delta}$ which we need for Proposition~\ref{prop-2.9}.

 \item\label{pf-2.11-ii}
$(1-\alpha)^{-1}\in 2\bZ$ we need to check $\hbar^{\frac{1}{2}-\delta'} \le \epsilon _3\rho$ which is equivalent to (\ref{eqn-2.58}).
 \end{enumerate}
\end{proof}

Now we need to check the contribution of zone $\{ (x,\xi)\colon |x\times \xi|<\rho\}$ with $\rho =h^{\frac{1}{2}-\delta}$ in the framework of Proposition~\ref{prop-2.11}\ref{prop-2.11-i} and $\rho =h^{\frac{1}{3}-\delta}$ in the framework of Proposition~\ref{prop-2.11}\ref{prop-2.11-ii}.

\begin{proposition}\label{prop-2.12}
Consider toy-model operator \textup{(\ref{eqn-1.1})}.  Let  $Q =q^\w (x,hD)$ be an operator with the symbol $q$ supported in 
$\epsilon ''\rho$-vicinity of $(\bar{x},\bar{\xi})$ satisfying 
\begin{gather}
|\bar{x}|=1,\qquad  a(\bar{x},\bar{\xi})=0, \qquad |\bar{x}\times \bar{\xi}|\le \rho, 
\label{eqn-2.59} 
\end{gather}
 Then
\begin{multline}
|F_{t\to h^{-1}\tau}\Bigl(\chi_T(t) u_h (x,y,t) \,^t\!Q_{1y}\Bigr)\Bigr|_{y=x}|\le C\rho^{d-1}h^{1-d} \\ \forall T\in [ \epsilon _1, T_1]  \ \forall \tau \colon |\tau|<\epsilon_2
\label{eqn-2.60} 
\end{multline}
with an arbitrarily small constant $\epsilon_1>0$, arbitrarily large constant $T_1$ and $\epsilon_2=\epsilon_2(T_1)$.
\end{proposition}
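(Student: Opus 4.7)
The plan is to prove Proposition~\ref{prop-2.12} by a standard Weyl-type semiclassical calculation for short times, combined with propagation of singularities for longer times.

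For short $T\in[\epsilon_1,T_0]$ with a small constant $T_0$, I would use the short-time WKB parametrix
\begin{gather*}
u_h(x,y,t)\equiv (2\pi h)^{-d}\int e^{ih^{-1}\phi(x,y,\xi,t)}\,\mathsf{a}(x,y,\xi,t;h)\,d\xi,
\end{gather*}
with phase $\phi$ solving Hamilton--Jacobi (so that $\phi|_{t=0}=\langle x-y,\xi\rangle$) and amplitude $\mathsf{a}=1+O(h)$. Composing with ${}^tQ_y$ multiplies the amplitude by $q(y,\xi)$ modulo lower-order terms. Applying $F_{t\to h^{-1}\tau}$, evaluating at $y=x$, and performing stationary phase in $t$ at the critical point $a(x,\xi)=\tau$ yields
\begin{gather*}
F_{t\to h^{-1}\tau}\bigl(\chi_T u_h\,{}^tQ_y\bigr)\big|_{y=x}
=(2\pi h)^{-d}\,T\int q(x,\xi)\,\hat\chi\bigl(Th^{-1}(a(x,\xi)-\tau)\bigr)\,d\xi+O(h^\infty).
\end{gather*}
The symbol $q$ is supported in an $\epsilon''\rho$-ball around $(\bar x,\bar\xi)$, and since $|\bar\xi|\asymp 1$ the energy shell $\{a(x,\xi)=\tau\}$ is transversal to the ball; its intersection with the $\xi$-fiber has $(d-1)$-dimensional area $O(\rho^{d-1})$, while $\hat\chi$ further localizes to a slice of thickness $O(h/T)$. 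Thus the $\xi$-integral is $O(\rho^{d-1}\cdot h/T)$, giving the bound $C\rho^{d-1}h^{1-d}$ claimed in (\ref{eqn-2.60}).

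For $T\in[T_0,T_1]$ I would extend by propagation of singularities. By Proposition~\ref{prop-2.9} and the Lagrangian description (\ref{eqn-2.27}), the microlocal support of $u_h(\cdot,\cdot,t)\,{}^tQ_y$ lies in $\Phi_t(\supp q)$. Only finitely many branches of this Lagrangian meet the diagonal $\{y=x\}$ for $t\in[T_0,T_1]$: by Proposition~\ref{prop-2.10}, none in the generic case $(1-\alpha)^{-1}\notin 2\bZ$ (since the returned point is at distance $\asymp 1$ from $\bar x$, far outside $\supp q$), and a bounded number for the exceptional case, controlled by the number of self-intersections. Each intersecting branch contributes $O(\rho^{d-1}h^{1-d})$ by the same stationary-phase computation on the associated Lagrangian, and summing the $O(1)$ contributions gives (\ref{eqn-2.60}).

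The main obstacle is the exceptional case $(1-\alpha)^{-1}\in 2\bZ$: here returning trajectories from $\bar x$ with angular momentum $\le\rho$ come back to within distance $\sim\rho$ of $\bar x$ (Proposition~\ref{prop-2.10}\ref{prop-2.10-ii}), comparable to the $\epsilon''\rho$-size of $\supp q$, so outgoing and returning branches may merge. One expects the uniform stationary-phase bound on the Lagrangian to still give $O(\rho^{d-1}h^{1-d})$ because the relevant phase remains non-degenerate at the $\rho$-scale under the standing assumption $\rho\ge h^{1/3-\delta}$; verifying this non-degeneracy uniformly in the overlapping regime is the technically delicate point of the argument.
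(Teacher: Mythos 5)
Your argument is a genuine departure from the paper's, and it also has a gap. The paper's proof is a one-line Tauberian bootstrap: one establishes the bound $C\rho^{d-1}h^{1-d}$ for the \emph{short-time} quantity $F_{t\to h^{-1}\tau}\bigl(\bar\chi_{T'}u_h\,{}^tQ_y\bigr)|_{y=x}$ with a small constant $T'$ (a standard Weyl-type computation, essentially the $\xi$-volume count you perform), and then "standard arguments" — the Tauberian machinery of \cite{monsterbook}, Chapter~4, which bootstraps a uniform-in-$\tau$ short-time bound to a bound for $\chi_T$ at any $T$ in a bounded range — give (\ref{eqn-2.60}) directly. The decisive feature of that route is that it requires \emph{no} information about the Hamiltonian flow or the Lagrangian structure for $t$ of order one or larger.

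Your proof instead tracks the Lagrangian $\Phi_t(\supp q)$ for all $t\le T_1$ and sums the contributions of the branches meeting the diagonal. This is a legitimate strategy, but it buys nothing here and creates two problems that the paper's approach avoids. First, the gap you yourself flag is real: in the resonant case $(1-\alpha)^{-1}\in 2\bZ$ the returning branch lies within $O(\rho)$ of the outgoing branch while $\supp q$ has size $\epsilon''\rho$, so you genuinely cannot treat the two branches as separated, and the asserted uniform non-degeneracy of the phase at scale $\rho$ is exactly what would need to be proved — you have not supplied that argument, so the proof is incomplete in precisely the case where the proposition is nontrivial. Second, your appeal to (\ref{eqn-2.27}) and Proposition~\ref{prop-2.9} to control the microsupport for $t\in[T_0,T_1]$ is misplaced: (\ref{eqn-2.27}) is stated for symbols satisfying (\ref{eqn-2.24}), i.e.\ supported in $\{|x\times\xi|\ge(1+\epsilon)\rho\}$, whereas your $q$ is supported in $\{|x\times\xi|\lesssim\rho\}$, and Proposition~\ref{prop-2.9} requires $|\bar x\times\bar\xi|\asymp\rho\ge h^{1/2-\delta}$, which fails on the part of $\supp q$ with $|x\times\xi|$ near $0$. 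On that low-angular-momentum core the propagation machinery of Section~\ref{sect-2.2} does not apply, which is exactly why the paper falls back on the Tauberian bound there rather than on geometric optics. You should replace the long-time part of your argument by the Tauberian bootstrap from the short-time estimate; then the non-degeneracy issue and the microsupport issue disappear simultaneously.
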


\begin{proof}
Proof follows from standard arguments and the standard estimate (\ref{eqn-2.60}) with $\chi_T(t)$ replaced by $\bar{\chi}_{T'}(t)$ with small enough constant $T'$.
\end{proof}

Taking the sum with respect to partition of unity we arrive to
 
 \begin{proposition}\label{prop-2.13}
 Consider toy-model operator \textup{(\ref{eqn-1.1})}.  Then 
  \begin{enumerate}[wide, label=(\roman*), labelindent=0pt]
  \item\label{prop-2.13-i} 
  For $|x|=1$
 \begin{multline}
|F_{t\to h^{-1}\tau}\Bigl(\chi_T(t) u_h (x,x,t) \Bigr)|\le
C h^{\frac{3}{2}-d} + C\bar{\rho} ^{d-1}h^{1-d} \\ 
\forall T\ge \epsilon_0\   \forall \tau \colon |\tau|<\epsilon_2 T^{-2\alpha/(1+\alpha)} 
\label{eqn-2.61} 
\end{multline}
 \item\label{prop-2.13-ii} 
Furthermore, as $0<\alpha \le \frac{1}{2}$ one can skip the first term in the right-hand expression of \textup{(\ref{eqn-2.61})}. 
\end{enumerate}\end{proposition}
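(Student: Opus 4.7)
The plan is to combine Propositions~\ref{prop-2.4}, \ref{prop-2.11}, and \ref{prop-2.12} via a microlocal partition of unity in the angular momentum $|x\times\xi|$, restricted to the diagonal $y=x$ on the unit sphere.

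First I dispose of the large-$T$ regime: for $T\ge T^*:=\epsilon^{-(1+\alpha)}$ the hypothesis $|x|=1$ yields $1\le\epsilon T^{1/(1+\alpha)}$, so Proposition~\ref{prop-2.2}\ref{prop-2.2-ii} applied with $y=x$ gives $|F_{t\to h^{-1}\tau}\chi_T u_h(x,x,t)|\le Ch^sT^{-s}$ for $|\tau|\le\epsilon_1T^{-2\alpha/(1+\alpha)}$, a bound stronger than \textup{(\ref{eqn-2.61})}. It therefore suffices to prove \textup{(\ref{eqn-2.61})} uniformly in the bounded range $\epsilon_0\le T\le T^*$, with constants allowed to depend on $T^*$.

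In this range I insert a microlocal partition of unity $I=Q_0+\sum_{j=1}^{N}Q_j+Q_\infty$ of operators $Q_\ell=q_\ell^\w(x,hD)$ on the $y$-side of $u_h(x,y,t)$, where $q_0$ is supported in $\{|y\times\eta|\le 2\bar\rho\}$, each $q_j$ ($1\le j\le N$) is supported in a dyadic shell $\{\rho_j/2\le|y\times\eta|\le 2\rho_j\}$ with $\rho_j=2^j\bar\rho\le\epsilon'$, and $q_\infty$ is supported in $\{|y\times\eta|\ge\epsilon'/2\}$; there are $N=O(|\log h|)$ shells. Then I estimate piece by piece: the $Q_0$-contribution by Proposition~\ref{prop-2.12} is $\le C\bar\rho^{d-1}h^{1-d}$; each $Q_j$-contribution ($j\ge 1$) is $O(h^s)$ by Proposition~\ref{prop-2.11}, since $\rho_j\ge\bar\rho$ meets the threshold $h^{1/2-\delta}$ when $(1-\alpha)^{-1}\notin 2\bZ$ and $h^{1/3-\delta}$ when $(1-\alpha)^{-1}\in 2\bZ$, and since the logarithmic number of shells is absorbed by arbitrariness of $s$; finally the $Q_\infty$-contribution is $\le Ch^{3/2-d}$ by Proposition~\ref{prop-2.4}\ref{prop-2.4-i}, because at $y=x$ on the unit sphere we have $\bar x\times\bar y=0$ and hence the rank-$2$ case \textup{(\ref{eqn-2.30})} applies.

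For part~\ref{prop-2.13-ii} with $0<\alpha\le\tfrac12$, the claim \textup{(\ref{eqn-2.4})} states that Hamiltonian trajectories with $|x\times\xi|>0$ have no self-intersections. By Remark~\ref{rem-2.5}\ref{rem-2.5-i} the $Q_\infty$-contribution at the diagonal is then $O(h^s)$, so the $Ch^{3/2-d}$ term in \textup{(\ref{eqn-2.61})} drops out.

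The main obstacle is the book-keeping: one must verify that every dyadic shell $Q_j$ genuinely meets the hypothesis of Proposition~\ref{prop-2.11} in the exact form required (which is automatic from $\rho_j\ge\bar\rho$), and that the geometric factor from Proposition~\ref{prop-2.12} is applied with the enlarged constant $2\bar\rho$ in place of $\bar\rho$, harmlessly absorbed into $C$. The rest is a routine sum of $O(|\log h|)$ negligible terms plus one $Ch^{3/2-d}$ term and one $C\bar\rho^{d-1}h^{1-d}$ term.
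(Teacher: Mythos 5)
Your proposal is correct and follows essentially the same route as the paper: a microlocal partition of unity in $|x\times\xi|$ at the diagonal point $|x|=1$, with Proposition~\ref{prop-2.12} handling the inner zone $\{|x\times\xi|\lesssim\bar\rho\}$, Proposition~\ref{prop-2.11} making the intermediate annulus negligible, Proposition~\ref{prop-2.4}\ref{prop-2.4-i} (via the rank-two degeneracy \textup{(\ref{eqn-2.30})} at $\bar x=\bar y$) producing the loop term $Ch^{3/2-d}$, and Remark~\ref{rem-2.5}\ref{rem-2.5-i} eliminating that term when $0<\alpha\le\frac12$. You merely spell out two steps the paper leaves implicit — the reduction of the unbounded range $T\ge\epsilon_0$ to a compact interval via Proposition~\ref{prop-2.2}\ref{prop-2.2-ii}, and the dyadic decomposition of the intermediate zone — and both are handled correctly.
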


\begin{proof}
Indeed, the first term in the  right-hand expression of \textup{(\ref{eqn-2.61})} is due to loops (which are absent as $0<\alpha \le \frac{1}{2}$ while the second term is a contribution of zone $\{\xi\colon |x\times \xi|\le \bar{\rho}\}$ with $\bar{\rho}$ defined by (\ref{eqn-1.9})..
\end{proof}

\begin{remark}\label{rem-2.14}
If we want to get rid off the first term in the right-hand expression of (\ref{eqn-2.61}) as $\frac{1}{2}< \alpha <1$, we need to calculate contribution of loops (in the zone $\{(x,\xi)\colon |x\times \xi|>\epsilon\}$. One can do it, arriving to the estimate
\begin{multline}
|F_{t\to h^{-1}\tau}\Bigl((\bar{\chi}_T(t) - \bar{\chi}_{T'}(t))u_h (x,x,t) \Bigr) \\
- \sum_{1\le k \le n} \sum_{\pm} \sum_{m\ge 0}\varkappa^\pm _{k,m} (x,\tau) \exp (\pm ih^{-1}\phi_k (x,\tau))h^{\frac{3}{2}-d+m}  |\le
  C\bar{\rho} ^{d-1}h^{1-d} 
  \label{eqn-2.62} 
\end{multline}
for all   $T\ge C_0$, $T'=\epsilon_0$ and  $ \tau \colon |\tau|<\epsilon_2 T^{-2\alpha/(1+\alpha)}$. 
\end{remark}

\begin{proof}[Proof of~Theorem~\ref{thm-1.1}]
Statements~\ref{thm-1.1-ii}  and~\ref{thm-1.1-iii} of Theorem~\ref{thm-1.1} follow from Statements~\ref{prop-2.13-i} and~\ref{prop-2.13-ii} of Proposition~\ref{prop-2.13}.
\end{proof}

\section{Scaling}
\label{sect-2.3}

Now we assume that
\begin{claim}\label{eqn-2.63}
Operator $A_h$ in question coincides with toy-model (\ref{eqn-1.1})  as $|x|\le R$, $R\ge 2$. 
\end{claim}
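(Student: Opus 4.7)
To establish (\ref{eqn-2.63}), one must exhibit operators $A_h$ that coincide with the toy-model $A^0_h$ inside the ball $\{|x|\le R\}$ for some $R\ge 2$. My plan proceeds in two steps.

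\emph{Step 1 (the prototype).} The toy-model operator $A^0_h = -\frac{1}{2}h^2\Delta - |x|^{-2\alpha}$ is defined by the same explicit formula globally on $\bR^d\setminus\{0\}$, so it trivially coincides with itself on $\{|x|\le R\}$ for every $R\ge 2$. In particular the class of operators satisfying (\ref{eqn-2.63}) is non-empty and contains the prototype at hand.

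\emph{Step 2 (admissible modifications of a general $A_h$).} To enlarge the class beyond $A^0_h$, fix $\phi\in C^\infty(\bR^d)$ with $\phi\equiv 1$ on $\{|x|\le R\}$ and $\phi\equiv 0$ outside $\{|x|\le R+1\}$. Given any second-order expression $A_h$ of the form (\ref{eqn-1.2}) satisfying (\ref{eqn-1.3})--(\ref{eqn-1.5}), define $\tilde A_h$ as the self-adjoint realization of the symmetric operator obtained by inserting the toy-model coefficients $(g^{jk},V_j,V)=(\updelta_{jk},0,-|x|^{-2\alpha})$ wherever $\phi=1$ and using the $\phi$-interpolation of the two coefficient sets on the annulus $\{R\le |x|\le R+1\}$. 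By construction $\tilde A_h$ satisfies (\ref{eqn-2.63}).

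The main technical point is the verification that the interpolated expression is uniformly elliptic on the annulus and that its self-adjoint realization has a domain compatible with that of $A_h$ outside the ball. Uniform ellipticity follows from (\ref{eqn-1.3}) provided $R$ and $K$ are large enough relative to $\beta$, since the difference $g^{jk}-\updelta_{jk}$ and its derivatives are then small on the annulus; self-adjointness is then routine by Kato--Rellich once the annulus is arranged to lie in the interior of $X$, which is possible because the hypotheses (\ref{eqn-1.3})--(\ref{eqn-1.5}) are posited on $B(0,2)$ and $R$ may be taken arbitrarily (no finite upper bound is required for the present existence claim). Beyond these two verifications, (\ref{eqn-2.63}) holds by construction on the resulting $\tilde A_h$ and (trivially) on $A^0_h$ itself.
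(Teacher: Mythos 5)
There is nothing to prove here, and the paper does not offer a proof: statement (\ref{eqn-2.63}) is introduced with the words ``Now we assume that,'' so it is a \emph{standing hypothesis} placed on the operator $A_h$ for the scaling arguments of Subsection~\ref{sect-2.3}, not a proposition established from the earlier hypotheses. Your proposal treats it as an existence claim and sets out to construct operators satisfying it, which answers a different question.

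Your Step~1 (that $A^0_h$ itself satisfies the condition trivially) is correct but vacuous, and Step~2 is not needed anywhere in the paper; moreover the cut-off construction you sketch is essentially the well-known device for producing examples, not a verification of (\ref{eqn-2.63}). In the paper's logic, (\ref{eqn-2.63}), together with the self-adjointness assumption, is simply imposed so that one may run the Tauberian and rescaling arguments on $B(0,R)$ with $R\ge 2$; everything that follows (the estimates (\ref{eqn-2.64})--(\ref{eqn-2.67}), Propositions~\ref{prop-2.15} and~\ref{prop-2.16}) holds for any self-adjoint operator agreeing with $A^0_h$ on $|x|\le R$, whatever it does outside. You should therefore not attempt a proof; instead recognize the statement as a hypothesis and, if you want, remark (as your Step~1 does) that the class of such operators is non-empty, or observe that the general perturbed operators of Section~\ref{sect-3} are later handled without this assumption.
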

Then, assuming (\ref{eqn-2.12}) (that is, $A_h$ is a self-adjoint operator) we estimate  for $|x|\asymp |y|\asymp 1$
\begin{gather*}
|F_{t\to h^{-1}\tau}\bar{\chi}_T (t)u_h(x,y,t)|\le Ch^{1-d}\qquad \text{as\ \ }T\le \epsilon R^{1+\alpha}, \   |\tau|\le \epsilon R^{-2\alpha}.
\end{gather*}
 Taking $x=y$ we apply standard Tauberian arguments (Part I)\footnote{\label{foot-4} See \cite{monsterbook}, Chapter~4.}  to get estimate
\begin{multline*}
|e_h (x,x, \tau)- e_h (x,x, \tau')|\le C \bigl(h^{1-d}R^{-1-\alpha} + h^{-d}|\tau-\tau'|\bigr) \\
\text{as\ \ }  |\tau| +  |\tau'|\le \epsilon R^{-2\alpha}.
\end{multline*}

Now take $|x|=r\le 1$. Then scaling $x\mapsto r^{-1}|x|$, $\tau \mapsto \tau r^{2\alpha}$, $R\mapsto r^{-1}R$ and $h\mapsto \hbar\coloneqq hr^{\alpha-1}$ 
we arrive to
\begin{multline*}
|e_h (x,x, \tau)- e_h (x,x, \tau')| \le C r^{-(d-2)\alpha  }   \bigl(h^{1-d}R^{-1-\alpha}  + h^{-d} |\tau-\tau'|  \bigr)  \\[3pt]
\text{as\ \ }  |\tau| +  |\tau'|\le \epsilon  R^{-2\alpha}
\end{multline*}
which for $R=2$ becomes\begin{multline}
|e_h (x,x, \tau)- e_h (x,x, \tau')|\le C r^{-(d-2)\alpha  }  \bigl(h^{1-d} + h^{-d}|\tau-\tau'|  \bigr) \\[3pt]
\text{as\ \ }  |\tau| +  |\tau'|\le \epsilon
\label{eqn-2.64}
\end{multline}
provided
\begin{gather}
\bar{h}\coloneqq h^{1/(1-\alpha)}\le r \le 1
\label{eqn-2.65}
\end{gather}
where the last assumption means exactly  that $\hbar \le 1$. Then (\ref{eqn-2.64}) implies 
\begin{multline}
|e_h (x,y, \tau)- e_h (x,y, \tau')|\\[3pt]
\shoveright{\le C r_x^{-(d-2)\alpha /2 }   r_y^{-(d-2)\alpha /2 } \bigl(h^{1-d} + h^{-d}|\tau-\tau'|  \bigr)} \\[3pt]
\text{as\ \ }  |\tau| +  |\tau'|\le \epsilon, \ \  \bar{r} \le r _x=|x|\le 1 , \ \  \bar{r} \le r _y=|y|\le 1
\label{eqn-2.66}
\end{multline}
Apply standard Tauberian arguments (Part II)\footref{foot-4} we arrive to

\begin{proposition}\label{prop-2.15}
Consider self-adjoint operator coinciding with the toy-model operator \textup{(\ref{eqn-1.1})} as $|x|\le 2$.  Then 
\begin{multline}
|e_h(x,y,\tau) - e^\T_{T,h} (x,y,\tau) |\le C r_x^{-(d-2)\alpha /2 }  r_y^{-(d-2)\alpha /2 }h^{1-d}  \\[3pt]
\text{as\ \ }  |\tau| \le \epsilon, \ \  \bar{r} \le r _x=|x|\le 1 , \ \   \bar{r}  \le r _y=|y|\le 1, \ \  T= C_0(r_x+r_y)^{1+\alpha}
\label{eqn-2.67}
\end{multline}
with the \underline{Tauberian expression}
\begin{gather}
 e^\T_{T,h} (x,y,\tau ) =h^{-1} \int _{-\infty}^\tau \Bigl(F_{t\to h^{-1}\tau}\bar{\chi}_T (t)u_h(x,y,t)\Bigr) \,d\tau.
\label{eqn-2.68} 
\end{gather}
\end{proposition}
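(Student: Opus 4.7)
The plan is to combine the scaled unscaled Tauberian estimate already derived in display \textup{(\ref{eqn-2.64})} with the off-diagonal upgrade \textup{(\ref{eqn-2.66})} and feed the latter into the second half of the standard Tauberian machinery from Chapter~4 of \cite{monsterbook}. First I would justify \textup{(\ref{eqn-2.66})} from \textup{(\ref{eqn-2.64})} using the monotonicity of the spectral projector: because $E_h(\tau)-E_h(\tau')$ is a nonnegative self-adjoint operator for $\tau\ge \tau'$, its Schwartz kernel satisfies the Cauchy--Schwarz bound
\begin{gather*}
|e_h(x,y,\tau)-e_h(x,y,\tau')|^2\le \bigl(e_h(x,x,\tau)-e_h(x,x,\tau')\bigr)\bigl(e_h(y,y,\tau)-e_h(y,y,\tau')\bigr),
\end{gather*}
and applying \textup{(\ref{eqn-2.64})} separately at $x$ and at $y$ (each time with their respective $r_x,r_y$) gives \textup{(\ref{eqn-2.66})} with the geometric mean weight.

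Next I would invoke the Tauberian step that produces $|e_h-e^{\T}_{T,h}|$ from the two inputs (a) an $L^\infty$-bound on $F_{t\to h^{-1}\tau}\bar{\chi}_T u_h$ of order $h^{1-d}$ with the same weights, which comes from the toy-model bound propagated through the scaling $x\mapsto r^{-1}x$, $\xi\mapsto r^\alpha \xi$, $t\mapsto tr^{-1+\alpha}$, $h\mapsto \hbar=hr^{\alpha-1}$, and (b) the monotonicity estimate \textup{(\ref{eqn-2.66})}. The Tauberian Part~II argument represents $e_h-e^{\T}_{T,h}$ as the difference between the true spectral jump and its $\bar{\chi}_T$-smoothed version, and bounds this by the modulus of continuity provided in \textup{(\ref{eqn-2.66})} at scale $hT^{-1}$ plus an error $O(h^sT^{-s})$ arising from the tail of $\bar{\chi}_T$; the two balance exactly when $T$ is of the order of the natural Ehrenfest time for trajectories starting at radius $\max(r_x,r_y)$, namely $T\asymp (r_x+r_y)^{1+\alpha}$.

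The choice $T=C_0(r_x+r_y)^{1+\alpha}$ is precisely the largest $T$ for which Propositions~\ref{prop-2.2} and its scaled version still guarantee that the propagation has not yet reached the singularity at the origin nor escaped to $|x|\gg 1$; equivalently $T\le \epsilon R^{1+\alpha}$ in the unscaled picture with $R\asymp r^{-1}$. This is exactly the domain on which $F_{t\to h^{-1}\tau}\bar{\chi}_T u_h$ is of order $h^{1-d}$, so Tauberian Part~II yields the bound
\begin{gather*}
|e_h(x,y,\tau)-e^{\T}_{T,h}(x,y,\tau)|\le Cr_x^{-(d-2)\alpha/2}r_y^{-(d-2)\alpha/2}h^{1-d}
\end{gather*}
uniformly in $\bar{r}\le r_x,r_y\le 1$ and $|\tau|\le\epsilon$.

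The main obstacle I anticipate is bookkeeping the scaling factor: one must verify that applying Tauberian Part~II after the scaling $h\mapsto \hbar=hr^{\alpha-1}$ produces the factor $r^{-(d-2)\alpha}$ (on the diagonal) with the correct exponent, since the $h^{1-d}$ factor picks up $r^{(1-d)(\alpha-1)}$ while the Jacobian from $dx$ in the density contributes $r^{-d}$-type corrections that must be interpreted carefully because $e_h(x,y,\tau)$ is a kernel density and not an integrated quantity. Once this accounting is done correctly, splitting into the diagonal case first and then using the Cauchy--Schwarz trick to reach the off-diagonal kernel is routine.
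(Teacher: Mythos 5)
Your proposal follows the paper's own route essentially verbatim: establish the Fourier-transform bound for the toy model on the annulus $|x|\asymp 1$, run the Tauberian Part~I argument to get the diagonal modulus of continuity (which after scaling becomes \textup{(\ref{eqn-2.64})}), upgrade to the off-diagonal estimate \textup{(\ref{eqn-2.66})}, and finally feed the result through Tauberian Part~II at the scaled time. The only thing you do that the paper merely asserts is the step ``\textup{(\ref{eqn-2.64})} implies \textup{(\ref{eqn-2.66})}'': you make the positivity/Cauchy--Schwarz argument explicit, namely that since $E_h(\tau)-E_h(\tau')$ is a nonnegative self-adjoint operator its Schwartz kernel obeys
\begin{gather*}
|e_h(x,y,\tau)-e_h(x,y,\tau')|^2\le \bigl(e_h(x,x,\tau)-e_h(x,x,\tau')\bigr)\bigl(e_h(y,y,\tau)-e_h(y,y,\tau')\bigr),
\end{gather*}
which immediately produces the geometric-mean weight $r_x^{-(d-2)\alpha/2}r_y^{-(d-2)\alpha/2}$ from the two diagonal estimates at radii $r_x,r_y$. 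This is indeed the standard argument the paper has in mind.

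One caution on the bookkeeping paragraph: the factors you list, $r^{(1-d)(\alpha-1)}$ from $\hbar^{1-d}$ and $r^{-d}$ from the density, do \emph{not} by themselves reproduce the exponent $-(d-2)\alpha$; you are missing the contribution coming from the factor $R^{-1-\alpha}$ in the unscaled Tauberian Part~I estimate, which under the scaling $R\mapsto r^{-1}R$ contributes an additional $r^{1+\alpha}$. Indeed $-d+(\alpha-1)(1-d)+(1+\alpha)=2\alpha-\alpha d=-(d-2)\alpha$, which is exactly what the paper reports in \textup{(\ref{eqn-2.64})}. This is precisely the sort of correction you flagged as something ``that must be interpreted carefully,'' so the concern is legitimate, but the resolution is this $R$-dependence, not an extra Jacobian correction. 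With that amendment, the scaling works out and the proof is the same as the paper's.
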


From now $x=y$. Then scaling results of Theorem~\ref{thm-1.1} and Remark~\ref{rem-1.2} and combining with (\ref{eqn-2.67})   we arrive to

\begin{proposition}\label{prop-2.16}
Consider self-adjoint operator coinciding with the toy-model operator \textup{(\ref{eqn-1.1})} as $|x|\le 2$.  Then for $\bar{r}\le r=|x|\le 1$
 \begin{enumerate}[wide, label=(\roman*), labelindent=0pt]
\item \label{prop-2.16-i}
Estimate holds 
\begin{multline}
|e _h(x,x,0) - e^\T_{T_*,h}(x,x,0)| \\
\le C r ^{-(d-2)\alpha  } h^{1-d} + 
 C r^{-\frac{3}{2}-\alpha (d-\frac{3}{2})} h^{\frac{3}{2}-d} + C\bar{\rho} ^{d-1} r^{-1-\alpha(d-1)}h^{1-d} 
\label{eqn-2.69}   
\end{multline}
with
\begin{gather}
\bar{\rho}= \left\{\begin{aligned}
& \hbar^{\frac{1}{2}-\delta} &&\text{as\ \ } (1-\alpha)^{-1}\notin 2\bZ, \\
& \hbar^{\frac{1}{3}-\delta} &&\text{as\ \ } (1-\alpha)^{-1}\in 2\bZ
\end{aligned}\right.
\label{eqn-2.70} 
\end{gather}
with $\hbar = hr^{\alpha-1}$ and  with  $T_*= \epsilon r^{1+\alpha}$. 

\item \label{prop-2.16-ii}
Further, as $0<\alpha \le \frac{1}{2}$ one can skip the second  term in the right-hand expression of \textup{(\ref{eqn-2.69})}.

\item \label{prop-2.16-iii}
Furthermore, 
\begin{gather}
e^\T_{T_*,h}(x,x,\tau)=e^\W_{h}(x,x,\tau) + O(r^{-2-(d-2)\alpha} h^{2-d})
\label{eqn-2.71}
\end{gather} 
with Weyl expression  $e^\W_{h}(x,x,0) $ defined by \textup{(\ref{eqn-1.11})}.
\end{enumerate}
\end{proposition}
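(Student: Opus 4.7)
The plan is to treat all three statements by the same mechanism: pull back results known at $|\tilde{x}|=1$ through the parabolic rescaling $x=r\tilde{x}$, $\xi=r^{-\alpha}\tilde{\xi}$, $t=r^{1+\alpha}\tilde{t}$, $\tau=r^{-2\alpha}\tilde{\tau}$, $h=r^{1-\alpha}\hbar$. Under this substitution the toy-model operator $A^0_h$ is unitarily equivalent to $r^{-2\alpha}A^0_\hbar$, and all kernels in sight transform by the density factor $r^{-d}$: $e_h(x,x,\tau)=r^{-d}\tilde{e}_\hbar(\tilde{x},\tilde{x},\tilde{\tau})$, and likewise for $e^\T_{T,h}$ with the Tauberian window becoming $\tilde{T}=Tr^{-1-\alpha}$, and for $e^\W_h$. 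Hypothesis \textup{(\ref{eqn-2.63})} combined with finite propagation of singularities (the group velocity on the relevant energy surface is $\asymp r^{-\alpha}$, so over the window $T\asymp r^{1+\alpha}$ singularities travel only $\asymp r\le 1$) keeps everything inside $B(0,R)$, where $A_h=A^0_h$; hence Theorem~\ref{thm-1.1} and Remark~\ref{rem-1.2} apply in the tilded frame.

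For part~\ref{prop-2.16-i}, I will use the triangle inequality with an intermediate window $T_1\coloneqq C_0 r^{1+\alpha}$: the first term $Cr^{-(d-2)\alpha}h^{1-d}$ of \textup{(\ref{eqn-2.69})} is then the output of Proposition~\ref{prop-2.15} at $r_x=r_y=r$, $T=T_1$, while the second and third terms come from Theorem~\ref{thm-1.1}\ref{thm-1.1-ii} applied in the tilded frame, which controls $\tilde{e}^\T_{\tilde{T}_1,\hbar}-\tilde{e}^\T_{\tilde{T}_*,\hbar}$ by $C\hbar^{3/2-d}+C\tilde{\rho}^{d-1}\hbar^{1-d}$, after multiplying by $r^{-d}$ and inserting $\hbar=hr^{\alpha-1}$. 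Part~\ref{prop-2.16-ii} will then be the scaled form of Theorem~\ref{thm-1.1}\ref{thm-1.1-iii}, since the loop-intersection count \textup{(\ref{eqn-2.4})} vanishes for $0<\alpha\le\tfrac{1}{2}$, eliminating the $h^{3/2-d}$ contribution in the tilded frame. Part~\ref{prop-2.16-iii} will follow from the scaled Remark~\ref{rem-1.2}: the tilded identity $\tilde{e}^\T_{\tilde{T}_*,\hbar}=\tilde{e}^\W_\hbar+O(\hbar^{2-d})$, multiplied by $r^{-d}$ and rewritten using $\hbar=hr^{\alpha-1}$, becomes \textup{(\ref{eqn-2.71})}.

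The essential obstacle is not analytic but arithmetical: exponent by exponent, I must verify that $r^{-d}\cdot\hbar^{3/2-d}=r^{-3/2-\alpha(d-3/2)}h^{3/2-d}$, that $r^{-d}\cdot\tilde{\rho}^{d-1}\hbar^{1-d}$ equals $r^{-1-\alpha(d-1)}\bar{\rho}^{d-1}h^{1-d}$ with $\bar{\rho}$ as in \textup{(\ref{eqn-2.70})}, and that $r^{-d}\cdot\hbar^{2-d}=r^{-2-(d-2)\alpha}h^{2-d}$. Each of these identities is a direct expansion of $(\alpha-1)(s-d)-d$ for $s=\tfrac{3}{2},1,2$ and requires care only in sign-tracking. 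Once these bookkeeping identities are in hand, the concatenation of Proposition~\ref{prop-2.15} with the scaled Theorem~\ref{thm-1.1} and Remark~\ref{rem-1.2} yields all three statements.
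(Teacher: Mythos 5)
Your proposal is correct and follows the paper's own route: the paper derives Proposition~\ref{prop-2.16} exactly by scaling Theorem~\ref{thm-1.1} and Remark~\ref{rem-1.2} and combining them with estimate~(\ref{eqn-2.67}) of Proposition~\ref{prop-2.15}, which is precisely your triangle-inequality decomposition across the intermediate window $T_1\asymp r^{1+\alpha}$. Your exponent bookkeeping and the finite-propagation justification for staying inside the ball where $A_h=A^0_h$ are both sound, and merely spell out what the paper leaves implicit.
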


\begin{remark}\label{rem-2.17}
Scaling, rotational symmetry and Remark~\ref{rem-2.14}  imply that for $\bar{r}\le r =|x|$ and $T_*=\epsilon_0 r^{1+\alpha}$
 \begin{gather}
e^\T _{T_*,h} \sim  \sum _{j=1}^{\infty} \kappa^0_j  (\tau |x|^{2\alpha}) h^{-d+2j} |x|^{-\alpha d -2j(1-\alpha)}.
\label{eqn-2.72}
\end{gather}
and for $T^*=C_0 r^{1+\alpha}$
\begin{multline} 
e^\T _{T,h} - e^\T _{T_*,h}\\[3pt]
 \sim   \sum _{1\le k\le n} \sum_{\pm} \sum_{j\ge 0}   
\kappa^{0\,\pm} _{k, j} (\tau |x|^{2\alpha}) h ^{\frac{3}{2}-d+j}   |x|^{-\alpha d - (\frac{3}{2}+j)(1-\alpha) } 
\exp\bigl( \pm ih^{-1}|x|^{1-\alpha}\phi^0_k(\tau |x|^{2\alpha} ) \bigr).
\label{eqn-2.73}
\end{multline}
\end{remark}

\chapter{General case}
 \label{sect-3}

\section{Standard results for perturbed operator}
 \label{sect-3.1}

Consider now operator $A_h$ defined by (\ref{eqn-1.2}) and satisfying 
(\ref{eqn-1.3})--(\ref{eqn-1.5}) in $B(0,R)\subset X$. Then in $B(0,R)$
 \begin{align}
&|D^\sigma ( g^{jk} - \updelta_{jk} )|\le c_\sigma \varepsilon |x|^{-|\sigma|},
\label{eqn-3.1}\\
&|D^\sigma V_j(x)|\le c_\sigma  \varepsilon |x|^{-\alpha-|\sigma|},
\label{eqn-3.2}\\
 &|D^\sigma(V(x)+|x|^{-2\alpha})| \le  c_\sigma  \varepsilon |x|^{-2\alpha-|\sigma|}\qquad \forall \sigma\colon |\sigma|\le K
\label{eqn-3.3}
 \end{align}
with $\varepsilon = R^\beta $.  We assume that
\begin{gather}
r_*\coloneqq h^{1/(1-\alpha) - \delta} \le r \le R
\label{eqn-3.4}
\end{gather}
with an arbitrarily small exponent $\delta>0$.

\begin{remark}\label{rem-3.1}
Usual scaling $x\mapsto xR^{-1}$, $\xi \mapsto \x R^{\alpha}$, $\tau \mapsto  \tau R^{2\alpha}$ and $h\mapsto \hbar = hR^{-1+\alpha}$ produces operator (\ref{eqn-1.2}) satisfying (\ref{eqn-3.1})--\ref{eqn-3.3}) with $\varepsilon =R^{\beta}$ in $B(0,R)$. In contrast to Subsection~\ref{sect-2.3} here $R\le \epsilon$. 

However below we   consider operator (\ref{eqn-1.2}) under assumptions (\ref{eqn-3.1})--(\ref{eqn-3.4}) for $R\ge r_*$ and then scale as necessary. This leads to different $\varepsilon$ in different applications.
\end{remark}

Repeating proof of Proposition~\ref{prop-2.2} we arrive to

\begin{proposition}\label{prop-3.2}
For operator \textup{(\ref{eqn-1.2})} under assumptions~\textup{(\ref{eqn-3.1})}--\textup{(\ref{eqn-3.3})}  in $B(0,R)$  both Statements \ref{prop-2.2-i} and \ref{prop-2.2-ii} of Propositions~\ref{prop-2.2} remain true as  $ r_*^{1+\alpha} \le T\le \epsilon_2 R^{1+\alpha}$, 
$\varepsilon \le \epsilon _0 $ where $\epsilon _0>0$, $\epsilon_2$  are  small enough constants.
\end{proposition}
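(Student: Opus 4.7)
The plan is to mimic the proof of Proposition~\ref{prop-2.2} while carefully tracking the perturbation terms controlled by~(\ref{eqn-3.1})--(\ref{eqn-3.3}), and then to conclude by scaling. First I would reduce to a model scale via the standard change of variables $x\mapsto xR^{-1}$, $\xi\mapsto \xi R^{\alpha}$, $\tau\mapsto \tau R^{2\alpha}$, $h\mapsto \hbar\coloneqq hR^{-1+\alpha}$, so that it suffices to establish both statements at $R\asymp 1$ and $T\asymp 1$ with the new semiclassical parameter $\hbar\le 1$; the bound $\hbar\le 1$ is exactly the lower endpoint $T\ge r_*^{1+\alpha}$ after rescaling.

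For part~\ref{prop-2.2-i}, I would reuse the escape function $\phi(x,\xi,t)=\zeta(C_0 t-|x|+c)$. Writing the principal symbol as $a=a_0+a_1$ with $a_0=\tfrac{1}{2}|\xi|^2-|x|^{-2\alpha}$ the toy symbol and $a_1$ the perturbation, (\ref{eqn-3.1})--(\ref{eqn-3.3}) give $|\{a_1,|x|\}|\le c\varepsilon$ on $\{a=0\}\cap\{|x|\asymp 1\}$, so the key bound (\ref{eqn-2.19}) survives after an innocuous enlargement of $C_0$. For part~\ref{prop-2.2-ii} I would use $\phi(x,\xi,t)=\zeta(-\epsilon_0 t+\langle x,\xi\rangle+c)$; the decisive bracket now reads
\begin{gather*}
\{a,\langle x,\xi\rangle\}=2(1-\alpha)|x|^{-2\alpha}+O(\varepsilon |x|^{-2\alpha}),
\end{gather*}
the $O(\varepsilon |x|^{-2\alpha})$ coming from the perturbation via (\ref{eqn-3.1})--(\ref{eqn-3.3}). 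For $\varepsilon\le\epsilon_0$ small enough this remains bounded below by $(1-\alpha)|x|^{-2\alpha}$, and the argument of Proposition~\ref{prop-2.2}\ref{prop-2.2-ii} transfers verbatim.

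The main obstacle is ensuring that the lower-order terms in the Weyl expansion of $h^{-1}[A_h,\phi^\w]$ do not acquire any negative powers of $|x|$ beyond what was already tolerated in the toy-model case; this is precisely what the derivative bounds (\ref{eqn-3.1})--(\ref{eqn-3.3}) are designed to guarantee, namely that each additional derivative of a perturbation coefficient costs one additional power of $|x|^{-1}$ and no more. Once the model-scale estimates are established, reversing the scaling produces the full range $r_*^{1+\alpha}\le T\le \epsilon_2 R^{1+\alpha}$: the upper endpoint reflects the domain of validity of (\ref{eqn-3.1})--(\ref{eqn-3.3}), while the lower endpoint is the image of $T\asymp 1$ at the smallest admissible radius $r_*$. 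The smallness assumption $\varepsilon\le\epsilon_0$ is used only to preserve the signs of the propagation brackets.
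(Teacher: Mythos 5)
Your proposal is correct and follows essentially the same route as the paper, whose entire proof of this proposition is the remark ``Repeating proof of Proposition~\ref{prop-2.2} we arrive to''; your explicit tracking of the $O(\varepsilon|x|^{-2\alpha})$ perturbation contributions to the two escape-function brackets, the smallness condition $\varepsilon\le\epsilon_0$ to preserve their signs, and the rescaling to cover the full $T$-range are precisely what that remark intends. One minor arithmetic slip worth noting: $\hbar\le 1$ after rescaling by $r$ corresponds to $r\ge\bar r=h^{1/(1-\alpha)}$, i.e.\ $T\ge\bar r^{1+\alpha}$, not to $r\ge r_*$; so your argument actually covers a slightly wider $T$-range than the proposition claims, which is harmless.
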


Then similarly to Theorem~\ref{thm-1.1}\ref{thm-1.1-i}  and Proposition~\ref{prop-2.15} we arrive to

\begin{corollary}\label{cor-3.3}
Consider operator \textup{(\ref{eqn-1.2})} under assumptions~\textup{(\ref{eqn-3.1})}--\textup{(\ref{eqn-3.3})}  in $B(0,2)$. Then  \textup{(\ref{eqn-2.67})} remains true.
\end{corollary}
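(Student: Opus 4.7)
The plan is to run the proof of Proposition~\ref{prop-2.15} essentially verbatim, with Proposition~\ref{prop-3.2} substituted for Proposition~\ref{prop-2.2} wherever microlocal propagation is invoked. The structure of the argument in Subsection~\ref{sect-2.3} is modular: only the long-range Fourier-transform bounds of Proposition~\ref{prop-2.2} and the standard Tauberian method actually enter, and Proposition~\ref{prop-3.2} already upgrades the former to the perturbed setting under (\ref{eqn-3.1})--(\ref{eqn-3.3}) once $\varepsilon$ is small. Nothing in the Tauberian machinery is sensitive to the presence of a smooth second-order perturbation at unit scale, so the skeleton of the argument transfers without essential change.

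Concretely, I would first establish the Fourier-transform bound
\[
|F_{t\to h^{-1}\tau}\bar\chi_T(t)u_h(x,y,t)|\le Ch^{1-d}
\]
for $|x|\asymp|y|\asymp 1$, small constant $T$ and $|\tau|\le\epsilon$ by the standard short-time method (the operator is a smooth elliptic operator at unit scale). Bootstrapping $T$ up to the maximal admissible value $T\asymp R^{1+\alpha}$ is exactly what Proposition~\ref{prop-3.2} delivers in combination with the standard Tauberian inequality, paralleling the proof of Theorem~\ref{thm-1.1}\ref{thm-1.1-i}. Next, applying Tauberian Part~I with $x=y$ yields
\[
|e_h(x,x,\tau)-e_h(x,x,\tau')|\le C\bigl(h^{1-d}+h^{-d}|\tau-\tau'|\bigr)
\]
at scale $|x|\asymp 1$. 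Finally, I would scale $x\mapsto xr^{-1}$, $\tau\mapsto \tau r^{2\alpha}$, $h\mapsto\hbar=hr^{\alpha-1}$ to transfer this bound to scale $|x|\asymp r$, picking up the prefactor $r^{-(d-2)\alpha}$ from the density scaling, polarize to $x\ne y$ with $r_x,r_y\ge \bar r$ via Tauberian Part~II, and conclude (\ref{eqn-2.67}).

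The main obstacle is verifying that the scaling is internally consistent. Specifically: (i) after zooming into the ball $B(0,r)$ via $x\mapsto xr^{-1}$, the rescaled operator again satisfies (\ref{eqn-3.1})--(\ref{eqn-3.3}) with perturbation size $\varepsilon\asymp r^\beta$, which lies below the smallness threshold of Proposition~\ref{prop-3.2} because $r\le 1$ (this is essentially Remark~\ref{rem-3.1}); (ii) the requirement $\hbar=hr^{\alpha-1}\le 1$ on the rescaled semiclassical parameter forces precisely the lower bound $r\ge \bar r=h^{1/(1-\alpha)}$ appearing in (\ref{eqn-2.67}); and (iii) the admissible time window $T\le \epsilon R^{1+\alpha}$ after rescaling matches the prescription $T=C_0(r_x+r_y)^{1+\alpha}$. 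Once these compatibilities are checked the argument is a transcription of the proof of Proposition~\ref{prop-2.15}.
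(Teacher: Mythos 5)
Your proposal is correct and follows essentially the same route as the paper, which proves this corollary by a one-line reduction: Proposition~\ref{prop-3.2} supplies the perturbed analogue of Proposition~\ref{prop-2.2}, after which the bootstrap and Tauberian argument of Theorem~\ref{thm-1.1}\ref{thm-1.1-i} and the scaling/polarization argument of Proposition~\ref{prop-2.15} transfer verbatim. Your expansion of the scaling compatibility checks (rescaled perturbation size $\varepsilon\asymp r^\beta$, the constraint $\hbar=hr^{\alpha-1}\le1$ giving $r\ge\bar r$, and the matching of the time window with $T=C_0(r_x+r_y)^{1+\alpha}$) is exactly the content that the paper leaves implicit.
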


\begin{proof}[Proof of Theorem~\ref{thm-1.3}\ref{thm-1.3-i}]
We take $R=2r_0$ so that $\varepsilon \le \epsilon_0$.
\end{proof}

Similarly to  Proposition~\ref{prop-2.3} we have

\begin{proposition}\label{prop-3.4}
Consider operator \textup{(\ref{eqn-1.2})} under assumptions~\textup{(\ref{eqn-3.1})}--\textup{(\ref{eqn-3.3})}  in $B(0,2)$. Let
assumptions   \textup{(\ref{eqn-2.24})}--\textup{(\ref{eqn-2.25})} be fulfilled with 
\begin{gather}
\rho \ge C \varepsilon 
\label{eqn-3.5}
\end{gather}
where here $\rho$ is a small constant and $C=C(\epsilon)$ with $\epsilon$ from  \textup{(\ref{eqn-2.24})}--\textup{(\ref{eqn-2.25})}. Then  \textup{(\ref{eqn-2.26})} remains true for   $|x|\le 1$, $|y|\le 1$, $\epsilon_1\le T\le c_0$.
\end{proposition}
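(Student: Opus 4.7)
The plan is to mimic the strategy behind Proposition~\ref{prop-2.3} but replace exact conservation of angular momentum by an approximate conservation whose error is controlled by $\varepsilon$. Let $b(x,\xi)=|x\times\xi|^2=\sum_{j<k}(x_j\xi_k-x_k\xi_j)^2$. For the toy-model symbol $a^0=\tfrac12|\xi|^2-|x|^{-2\alpha}$ one has $\{a^0,b\}=0$, which is what makes the unperturbed proof work. For the perturbed principal symbol $a$ of $A_h$ we must therefore estimate $\{a,b\}=\{a-a^0,b\}$ quantitatively on the relevant supports.

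First I would observe that for the radii of interest $|x|\asymp 1$ and on the energy shell $a=0$ (so $|\xi|\asymp 1$), assumptions \textup{(\ref{eqn-3.1})}--\textup{(\ref{eqn-3.3})} give $|\nabla_{x,\xi}(a-a^0)|=O(\varepsilon)$; together with $|\nabla_{x,\xi}b|\lesssim |x\times\xi|\,(|x|+|\xi|)\lesssim\rho$ on the zone $|x\times\xi|\lesssim\rho$, this yields the crucial pointwise bound
\begin{equation*}
\{a,b\}=O(\varepsilon\rho)
\end{equation*}
on the part of phase space where $b$ lies between the thresholds $(1-\epsilon)^2\rho^2$ and $(1+\epsilon)^2\rho^2$. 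Consequently, along Hamiltonian trajectories $|db/dt|\le C'\varepsilon\rho$, so crossing the gap of width $\asymp\epsilon\rho^2$ between $\supp q_1$ and $\supp q_2$ would require time at least $\epsilon\rho/(C'\varepsilon)\ge c_0$ provided $\rho\ge C\varepsilon$ with $C=C(\epsilon)$ sufficiently large, which is precisely assumption \textup{(\ref{eqn-3.5})}.

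The quantitative statement \textup{(\ref{eqn-2.26})} then follows from a standard positive commutator/propagation argument applied to $hD_t-A_h$ with a weight of the form
\begin{equation*}
\phi(x,\xi,t)=\zeta\bigl(c_1\rho^{-2}\bigl(b-(1-\tfrac{\epsilon}{2})^2\rho^2\bigr)+c_2 t\bigr),
\end{equation*}
whose Poisson bracket against $a$ has the sign needed to trap propagation outside $\supp q_2$, with the error $\{a-a^0,\phi\}$ absorbed thanks to $\rho\ge C\varepsilon$. The same scaling $x\mapsto xR^{-1}$, $\xi\mapsto\xi R^\alpha$, $t\mapsto tR^{-1+\alpha}$, $h\mapsto hR^{-1+\alpha}$ used in Proposition~\ref{prop-3.2} extends this from $|x|\asymp 1$ to the general radius $|x|\le 1$; the restriction $T\le c_0$ ensures trajectories starting with $|x|\le 1$ do not leave $B(0,2)$ where \textup{(\ref{eqn-3.1})}--\textup{(\ref{eqn-3.3})} hold.

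The main obstacle I anticipate is the careful bookkeeping of the lower order symbols generated when one commutes $A_h$ with quantizations built from $b$: one must verify, as in the proof of Proposition~\ref{prop-2.2}, that each successive commutator gains a factor of $h$ without losing any negative power of $|x|$; this is where the smoothness hypothesis on $K$ derivatives in \textup{(\ref{eqn-3.1})}--\textup{(\ref{eqn-3.3})} is used. Once this routine is set up, Statements of Proposition~\ref{prop-2.3} carry over verbatim and produce estimate \textup{(\ref{eqn-2.26})} in the desired range $\epsilon_1\le T\le c_0$.
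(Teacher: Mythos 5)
Your proof is correct and follows essentially the same route as the paper's: both hinge on the approximate conservation $|\{a,\, x_j\xi_k - x_k\xi_j\}|\le C_0\varepsilon|x|^{-2\alpha}$ on the energy shell (the paper's \textup{(\ref{eqn-3.6})}, equivalent to your $\{a,b\}=O(\varepsilon\rho)$ there), after which the hypothesis $\rho\ge C(\epsilon)\varepsilon$ protects the gap between the two supports and a standard propagation/positive-commutator argument yields \textup{(\ref{eqn-2.26})}. You merely make explicit, via the escape function built from $b=|x\times\xi|^2$ and the time-of-crossing estimate, what the paper compresses into the phrase ``standard propagation results.''
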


\begin{proof}
In contrast to the toy-model operator when $\{a(x,\xi) , x_j \xi_k -x_k\xi_j\}=0$ we have now that 
on the energy level $\tau\colon |\tau|\le \epsilon_0 |x|^{-2\alpha}$
\begin{gather}
|\{a(x,\xi) , x_j \xi_k -x_k\xi_j\}| \le C_0\varepsilon |x|^{-2\alpha}.
\label{eqn-3.6}
\end{gather}
Then we can apply standard propagation results. More sophisticated arguments will be required when both
$\rho$ and $\varepsilon$ are small parameters.
\end{proof}

Repeating proof of Proposition~\ref{prop-2.4} we arrive to

\begin{proposition}\label{prop-3.5}
Consider operator \textup{(\ref{eqn-1.2})} under assumptions~\textup{(\ref{eqn-3.1})}--\textup{(\ref{eqn-3.3})}  in $B(0,R)$. Let   $Q_1=q_1^\w(x,hD)$ be operator with the symbol satisfying \textup{(\ref{eqn-2.24})} with an arbitrarily small constant $\rho$.

Then both Statements~\ref{prop-2.4-i} and~\ref{prop-2.4-ii}  of Proposition~\ref{prop-2.4} remain true for $T\ge \epsilon_1$ provided 
$\varepsilon $ satisfies \textup{(\ref{eqn-3.5})}.
\end{proposition}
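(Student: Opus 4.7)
The plan is to repeat the argument of Proposition~\ref{prop-2.4} verbatim, with Proposition~\ref{prop-3.4} playing the role of Proposition~\ref{prop-2.3}. The two essential ingredients from the toy-model proof that I must re-establish for the perturbed operator are: the FIO representation (\ref{eqn-2.27}) for $u_h(x,y,t)\,^t\!Q_{1y}$, and the rank conditions (\ref{eqn-2.30})--(\ref{eqn-2.31}) for the projection $\uppi_x\Phi_t$. Once these are in place, the same stationary-phase argument yields the bounds $h^{\frac32-d}$ (generic case) and $h^{\frac12(1-d)}$ (when $|\bar{x}\times\bar{y}|\ge\rho$).

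First, I would apply Proposition~\ref{prop-3.4} with an $\epsilon'<\epsilon$ to justify microlocalizing near $\{|x\times\xi|\ge(1+\epsilon')\rho\}$ on the support of $q_1$, with only an $O(h^s)$ error. By the conservation law (\ref{eqn-3.6}), on a fixed time interval $|t|\le T^*$ the angular momentum $|x\times\xi|$ changes by $O(\varepsilon)$ along trajectories; since $\rho\ge C\varepsilon$ with $C=C(\epsilon)$ large, the singularities issued from $\supp(q_1)$ stay in $\{|x\times\xi|\ge\frac12(1+\epsilon')\rho\}$, and hence bounded away from the origin ($|x|\ge\epsilon''$) throughout. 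In this compact region the coefficients of $A_h$ are smooth, so the standard construction presents $u_h(x,y,t)\,^t\!Q_{1y}$ as a semiclassical Fourier integral distribution associated with the Lagrangian $\Lambda=\{(\Phi_t(x,\xi),t;x,-\xi,a(x,\xi))\}$, where now $\Phi_t$ is the Hamiltonian flow of the full symbol $a$; this is the analog of~(\ref{eqn-2.27}).

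Next, I would verify the rank conditions. The perturbed flow $\Phi_t$ differs from the toy-model flow $\Phi^0_t$ by $O(\varepsilon)$ in the $C^1$ topology on the compact region $\{|x|\ge\epsilon''\}\cap\{|t|\le T^*\}$, by Gronwall applied to the difference of the Hamilton vector fields (the latter differ by $O(\varepsilon)$ thanks to (\ref{eqn-3.1})--(\ref{eqn-3.3})). In particular, $d_{\xi,t}\uppi_x\Phi_t$ is an $O(\varepsilon)$-perturbation of $d_{\xi,t}\uppi_x\Phi^0_t$. Since the ranks $2$ and $d$ in (\ref{eqn-2.30})--(\ref{eqn-2.31}) are lower semicontinuous and generic (they are the ranks of the full Jacobians in the respective cases), and since a simple check shows the relevant non-degenerate minors for $\Phi^0_t$ are bounded below by a positive constant on the compact regions we have reduced to, the same ranks are attained by $\Phi_t$ for $\varepsilon\le\epsilon_0$ sufficiently small. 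This recovers the analogs of (\ref{eqn-2.30}) and (\ref{eqn-2.31}).

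With both the FIO structure and the rank conditions in hand, the stationary-phase argument in the proof of Proposition~\ref{prop-2.4} carries over with no change, yielding (\ref{eqn-2.28}) and, under the separation hypothesis $|\x\times y|\ge\rho$, (\ref{eqn-2.29}). The main potential obstacle is the stability of the rank conditions: one must be sure that the perturbation of $\Phi_t$ does not degenerate $d_{\xi,t}\uppi_x\Phi_t$, and this is precisely why the hypothesis $\rho\ge C\varepsilon$ with $C=C(\epsilon)$ is invoked---it guarantees both that trajectories stay uniformly away from the origin (where the FIO machinery would fail) and that the perturbation is small compared to the geometric scales controlling the non-degeneracy of $\uppi_x\Phi^0_t$.
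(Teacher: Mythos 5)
Your proposal is correct and follows essentially the same approach the paper intends (the paper's own proof is the single line ``Repeating proof of Proposition~\ref{prop-2.4}\ldots''; the substantive perturbed-operator content is relegated to Remark~\ref{rem-3.6}\ref{rem-3.6-ii}). You correctly identify the three ingredients to re-establish: microlocalization via Proposition~\ref{prop-3.4} (with a bootstrap via \textup{(\ref{eqn-3.6})} to keep trajectories away from the origin), the FIO structure in the smooth region, and the stability of the Jacobian rank under an $O(\varepsilon)$-perturbation of the flow.

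One small imprecision worth flagging: you write that ``the same ranks are attained'' for $\Phi_t$. For the analogue of \textup{(\ref{eqn-2.30})} this is not quite right — the rotational symmetry that forced rank exactly $2$ is destroyed by the perturbation, and one can only conclude $\rank \ge 2$ (this is precisely \textup{(\ref{eqn-3.7})}). Since higher rank only improves the stationary-phase estimate, the bound $h^{\frac32-d}$ is unaffected, so your conclusion stands; but the statement should read ``rank at least $2$, and rank exactly $d$'' to match \textup{(\ref{eqn-3.7})}--\textup{(\ref{eqn-3.8})}. For \textup{(\ref{eqn-2.31})} your semicontinuity argument does give $\rank = d$, but note that the lower bound on the relevant $d\times d$ minor of $d_{\xi,t}\uppi_x\Phi^0_t$ degenerates as $|\bar x\times\bar y|\to0$; the reason your argument goes through is that Proposition~\ref{prop-2.4}\ref{prop-2.4-ii} only requires $|\x\times y|\ge\rho$ with $\rho$ a constant, so on that region the minor is bounded below by a $\rho$-dependent constant and the $O(\varepsilon)$ perturbation cannot kill it for $\varepsilon\le\epsilon_0(\rho)$. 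That is exactly the role of the hypothesis $\rho\ge C\varepsilon$.
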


\begin{remark}\label{rem-3.6}
 \begin{enumerate}[wide, label=(\roman*), labelindent=0pt]
\item\label{rem-3.6-i}
Under assumption (\ref{eqn-3.5})  Statements~\ref{rem-2.5-i}--\ref{rem-2.5-ii} of Remark~\ref{rem-2.5} remain true. Furthermore,  Statement~\ref{rem-2.5-iii} remains true due to long-range propagation of singularities\,\footref{foot-1} and Proposition~\ref{prop-3.7} below.

\item\label{rem-3.6-ii}
For operator (\ref{eqn-1.2})  instead of  (\ref{eqn-2.30}) and (\ref{eqn-2.31}) we have respectively
\begin{claim}\label{eqn-3.7}
If $|\bar{x}\times \bar{y}|\le C_0\varepsilon $ then $\rank (d_{\xi ,t } \uppi _x  \Phi_t(\bar{x}, \xi ))|_{\xi =\bar{\xi },t=\bar{t}} \ge 2$ 
\end{claim}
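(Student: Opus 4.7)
The plan is to upgrade the toy-model identity (\ref{eqn-2.30}) to the perturbed setting by a $C^1$-perturbation argument on the Hamiltonian flow. Writing $\Phi_t$ and $\Phi^0_t$ for the flows of $A_h$ and $A^0_h$, the assumptions (\ref{eqn-3.1})--(\ref{eqn-3.3}) give that the difference of Hamiltonians (and their first few derivatives) is bounded by $\varepsilon$ times the corresponding toy-model size. On the time interval $|t|\le \bar t$ relevant here, the toy trajectory from $(\bar x,\bar\xi)$ stays on the scale $|\bar x|$ away from the origin, so a Gronwall estimate on the variational equation yields
\begin{gather*}
\|\Phi_t-\Phi^0_t\|_{C^1}\le C\varepsilon\qquad\text{uniformly in }|t|\le\bar t.
\end{gather*}
In particular, setting $\bar y^0=\uppi_x\Phi^0_{\bar t}(\bar x,\bar\xi)$, we get $|\bar y-\bar y^0|\le C\varepsilon$, and so the hypothesis $|\bar x\times \bar y|\le C_0\varepsilon$ forces $|\bar x\times \bar y^0|\le C_1\varepsilon$ as well.

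Next, I would exhibit two explicit tangent directions $v_1,v_2$ in $(\xi,t)$-space whose images under $d_{\xi,t}\uppi_x\Phi^0_t$ are linearly independent with a quantitative lower bound on the corresponding $2\times 2$ minor. The natural choice is $v_1=\partial_t$, whose image is the velocity $(\partial_\xi a^0)(\bar y^0,\bar\eta^0)$, and $v_2$ the radial variation of $\xi$ (equivalently, an energy-changing direction) restricted to the $2$-dimensional plane spanned by $\bar x,\bar\xi$. From the explicit Kepler-type dynamics in (\ref{eqn-2.8})--(\ref{eqn-2.13}), linear independence of $d_{\xi,t}\uppi_x\Phi^0_t(v_j)$ is precisely the content of (\ref{eqn-2.30}), and the determinant of the corresponding minor is bounded below by a positive constant depending only on the radial scale $|\bar x|$ and on the bounds on $\bar t$. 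Then the $C^1$-closeness of the flows yields
\begin{gather*}
\bigl|\det\bigl(d_{\xi,t}\uppi_x\Phi_t(v_1),\,d_{\xi,t}\uppi_x\Phi_t(v_2)\bigr)-\det\bigl(d_{\xi,t}\uppi_x\Phi^0_t(v_1),\,d_{\xi,t}\uppi_x\Phi^0_t(v_2)\bigr)\bigr|\le C'\varepsilon,
\end{gather*}
so for $\varepsilon\le\epsilon_0$ the perturbed minor is still non-zero and hence the rank of the perturbed differential is at least $2$, which is (\ref{eqn-3.7}).

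The main obstacle is the quantitative non-degeneracy of the toy $2\times 2$ minor: one needs a lower bound that survives $\varepsilon$-perturbations uniformly on the region of $(\bar x,\bar\xi,\bar t)$ considered, including the configurations (\ref{eqn-2.4})--(\ref{eqn-2.7}) with several self-intersections. In the rotationally symmetric toy picture this comes from separating the radial motion (\ref{eqn-2.9}) from the angular motion (\ref{eqn-2.10}) and verifying that neither $\partial_t \uppi_x\Phi^0_t$ nor the $\xi$-radial derivative degenerates away from the turning point; modulo that (and the observation that the hypothesis $|\bar x\times \bar y|\le C_0\varepsilon$ is consistent with the toy model being in the $\bar x\times \bar y^0=0$ regime up to the perturbation scale), the argument is purely perturbative and, as the author indicates by leaving it to the reader, essentially routine.
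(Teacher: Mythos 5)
Your proposal is correct and coincides with the paper's (implicit) approach: the paper offers no argument for (\ref{eqn-3.7}) beyond presenting it as the $\varepsilon$-perturbed analogue of (\ref{eqn-2.30})--(\ref{eqn-2.31}), and the Gronwall/variational-equation estimate giving $C^1$-closeness $\|\Phi_t-\Phi^0_t\|_{C^1}\le C\varepsilon$, followed by lower semi-continuity of rank under small $C^1$-perturbations (uniformly on the compact region of $(\bar x,\bar\xi,\bar t)$ where $|\bar x\times\bar\xi|\ge\rho$), is exactly the intended argument. The only minor streamlining available: you do not actually need to exhibit $v_1,v_2$ explicitly --- since by (\ref{eqn-2.30}) and (\ref{eqn-2.31}) the toy rank is $\ge 2$ on the whole compact configuration set (not only at $\bar x\times\bar y^0=0$), the maximal $2\times 2$ minor is uniformly bounded below by compactness, and the $C\varepsilon$-perturbation preserves this.
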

\vskip-.5\baselineskip
and
\vskip-.5\baselineskip
\begin{claim}\label{eqn-3.8}
If $|\bar{x}\times \bar{y}|\ge C_0\varepsilon $ then $\rank (d_{\xi ,t } \uppi _x  \Phi_t(\bar{x}, \xi ))|_{\xi =\bar{\xi },t=\bar{t}} = d$. 
\end{claim}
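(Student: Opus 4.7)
The plan is to derive these rank statements as an $\varepsilon$-perturbation of the toy-model claims (\ref{eqn-2.30})--(\ref{eqn-2.31}). The main preliminary step would be to observe that under (\ref{eqn-3.1})--(\ref{eqn-3.3}) the perturbed Hamiltonian $a(x,\xi)$ differs from the toy-model Hamiltonian $a^0(x,\xi)=\tfrac12|\xi|^2-|x|^{-2\alpha}$ by terms controlled by $\varepsilon$ in the appropriate weighted $C^k$ norms. Applying Gronwall's inequality to the variational equation over the relevant time interval then yields
\[
\Phi_t-\Phi_t^0=O(\varepsilon),\qquad D(\Phi_t-\Phi_t^0)=O(\varepsilon)
\]
in the correspondingly rescaled norms, so that $d_{\xi,t}\uppi_x\Phi_t(\bar x,\bar\xi)$ differs from its toy-model analogue by an $O(\varepsilon)$ matrix.

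For (\ref{eqn-3.8}), first I would set $\bar y^{\,0}:=\uppi_x\Phi^0_{\bar t}(\bar x,\bar\xi)$, so that $|\bar y^{\,0}-\bar y|\le C\varepsilon$. If $C_0$ is taken sufficiently large, then $|\bar x\times\bar y^{\,0}|\ge(C_0-C)\varepsilon>0$, so (\ref{eqn-2.31}) applies and some $d\times d$ minor of $d_{\xi,t}\uppi_x\Phi_t^0$ is nonzero. A quantitative version, tracking how that minor depends on $|\bar x\times\bar y^{\,0}|$, shows it is bounded below by a positive function of $|\bar x\times\bar y|$; the $O(\varepsilon)$ perturbation of this minor is then negligible for $C_0$ large enough, leaving the perturbed rank equal to $d$.

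For (\ref{eqn-3.7}), I would use that (\ref{eqn-2.30})--(\ref{eqn-2.31}) already imply the toy-model rank is everywhere at least $2$: the time derivative contributes the Hamiltonian velocity $\partial_\xi a^0(\bar x,\bar\xi)\ne 0$, and either a radial or a transverse $\xi$-perturbation (depending on whether $\bar\xi$ is purely radial) supplies a second direction in $T_{\bar y}\bR^d$ independent of the first. Hence one can select a $2\times 2$ minor of $d_{\xi,t}\uppi_x\Phi_t^0$ bounded below uniformly in the configuration; the $O(\varepsilon)$ perturbation preserves this lower bound, yielding rank at least $2$ for the perturbed flow.

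The main obstacle I expect is controlling the quantitative degeneration of the toy-model $d\times d$ minor in (\ref{eqn-3.8}) as $|\bar x\times\bar y|\to 0$, so that the threshold $C_0\varepsilon$ is sharp enough for the perturbation argument to go through. This should reduce to analysing how the $(d-2)$-parameter family of trajectories described in (\ref{eqn-2.15}) opens up into a full-rank $d$-parameter family as one moves off the collinear set, together with the corresponding non-degeneracy on the geodesic sphere in the spherical-coordinate picture of (\ref{eqn-2.40})--(\ref{eqn-2.41}).
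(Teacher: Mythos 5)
The paper does not actually prove \textup{(\ref{eqn-3.8})}: it is asserted in Remark~\ref{rem-3.6}\ref{rem-3.6-ii} as the perturbed analogue of \textup{(\ref{eqn-2.31})}, whose own verification is left to the reader in Proposition~\ref{prop-2.4}. Your strategy --- Gronwall control $D(\Phi_t-\Phi_t^0)=O(\varepsilon)$ followed by perturbation of the toy-model rank statement --- is the natural route, and Proposition~\ref{prop-3.7} is the paper's embodiment of the flow-derivative control you invoke.

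There is, however, a concrete gap in the quantitative step, precisely at the point you flag as the ``main obstacle.'' By the rotational degeneracy described in \textup{(\ref{eqn-2.15})}, when $|\bar x\times\bar y^{\,0}|$ is small the toy-model Jacobian $d_{\xi,t}\uppi_x\Phi_t^0$ splits into a well-conditioned $2\times 2$ in-plane block and a $(d-2)\times(d-2)$ transverse block whose singular values are all $\asymp|\bar x\times\bar y^{\,0}|$ (the block is essentially a scalar $\asymp|\bar x\times\bar y^{\,0}|$ times a rotation, by the $SO(d-1)$ symmetry about the axis through $0$ and $\bar x$). Consequently the $d\times d$ minor is $\asymp|\bar x\times\bar y^{\,0}|^{d-2}\asymp(C_0\varepsilon)^{d-2}$, which for $d\ge4$ is far smaller than $\varepsilon$. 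An entry-wise $O(\varepsilon)$ change in the Jacobian can shift this determinant by $\Theta(\varepsilon)$, so ``the $O(\varepsilon)$ perturbation of this minor is then negligible for $C_0$ large enough'' fails for $d\ge4$: the determinant is not a Lipschitz-stable certificate of full rank at this scale. The correct invariant is the smallest singular value. The same symmetry argument shows $\sigma_{\min}\bigl(d_{\xi,t}\uppi_x\Phi_t^0\bigr)\gtrsim|\bar x\times\bar y^{\,0}|$ --- all $d-2$ transverse singular values are $\gtrsim|\bar x\times\bar y^{\,0}|$ simultaneously, not merely their product --- whereupon Weyl's singular-value inequality gives $\sigma_{\min}\bigl(d_{\xi,t}\uppi_x\Phi_t\bigr)\ge c\,|\bar x\times\bar y^{\,0}|-C\varepsilon>0$ for $C_0>C/c$. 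Replace ``minor'' by ``smallest singular value'' throughout, and establish the lower bound $\gtrsim|\bar x\times\bar y^{\,0}|$ for the whole transverse block rather than only for its determinant, and your argument closes in every dimension.
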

\item\label{rem-3.6-iii}
I believe that  for generic perturbations   ${\rank (d_{\xi ,t } \uppi _x  \Phi_t(\bar{x}, \xi ))|_{\xi =\bar{\xi },t=\bar{t}} = d}$ even as
$|\bar{x} \times \bar{y}|\le C_0\varepsilon$;  morover, the largest absolute value of $d\times d$–minors of $d_{\xi ,t } \uppi _x  \Phi_t(\bar{x}, \xi ))$ is
$\asymp \varepsilon^{d-2}$.
\end{enumerate}
\end{remark}

\begin{proposition}\label{prop-3.7}
Consider operator \textup{(\ref{eqn-1.2})} under assumptions~\textup{(\ref{eqn-3.1})}--\textup{(\ref{eqn-3.3})} in $B(0,2)$. Then in the zone $\{(x,\xi)\colon |x\times \xi |\ge \rho\}$ with $\rho \ge C\varepsilon $ Hamiltonian flow $\Phi _t $ satisfies
\begin{gather}
|D_{x,\xi}  \Phi _t (x,\xi)| \le C \Bigl( 1 +  \frac{|t|}{r_x ^{1+\alpha}}\Bigr)^N.
\label{eqn-3.9}
\end{gather}
\end{proposition}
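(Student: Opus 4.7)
The plan is to differentiate the Hamiltonian flow with respect to initial data and then apply Gronwall's inequality along the trajectory, using the approximate conservation of angular momentum from \textup{(\ref{eqn-3.6})} to pin $|x(s)|$ from below and thereby control the coefficients of the variational equation.

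First I would write the variational equation $\dot W(s)=J\nabla^2 a(\Phi_s(x,\xi))W(s)$ with $W(0)=I$ and $W(t)=D_{x,\xi}\Phi_t$. On the energy level $|a|\le \epsilon_0|x|^{-2\alpha}$, where $|\xi|\asymp |x|^{-\alpha}$, assumptions \textup{(\ref{eqn-3.1})}--\textup{(\ref{eqn-3.3})} give $|\partial^2_\xi a|=O(1)$, $|\partial_x\partial_\xi a|=O(|x|^{-1-\alpha})$ and $|\partial^2_x a|=O(|x|^{-2-2\alpha})$. In the dimensionless coordinates $\tilde x=x/r$, $\tilde\xi=\xi r^\alpha$ at a reference scale $r=|x(s)|$ the operator norm of $J\nabla^2 a$ is uniformly of size $O(|x(s)|^{-1-\alpha})$, which is the natural inverse time scale at that radius.

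Next I would establish the lower bound $|x(s)|\gtrsim \max(r_x,\,|s|^{1/(1+\alpha)})$ along the trajectory. By \textup{(\ref{eqn-3.6})}, the rate of change of $|x\times\xi|$ is bounded by $C\varepsilon |x(s)|^{-2\alpha}$, so a short bootstrap using $\rho\ge C\varepsilon$ yields $|x(s)\times\xi(s)|\ge \rho/2$ throughout the relevant time interval. Combined with the radial equation $\dot r=\pm\sqrt{2r^{-2\alpha}-M^2 r^{-2}}+O(\varepsilon r^{-\alpha})$ inherited from the toy-model analysis of Subsection~\ref{sect-2.1}, this gives the claimed lower bound on $|x(s)|$. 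Gronwall's inequality then gives
\begin{gather*}
|D_{x,\xi}\Phi_t|\le \exp\Bigl(C\int_0^{|t|}|x(s)|^{-1-\alpha}\,ds\Bigr).
\end{gather*}
Splitting the trajectory into dyadic $r$-scales $|x(s)|\asymp 2^k r_x$, each scale contributes $O(1)$ to this integral (since the time spent at scale $r$ is $\asymp r^{1+\alpha}$ and the integrand is $\asymp r^{-1-\alpha}$), so the total integral is bounded by $C\log(1+|t|/r_x^{1+\alpha})$, yielding the required polynomial bound $C(1+|t|/r_x^{1+\alpha})^N$.

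The main obstacle is the passage near the minimum radius $r_{\min}\asymp\rho^{1/(1-\alpha)}$, where $|\nabla^2 a|$ attains its largest values; however, the time the trajectory spends in any window $|x|\asymp r$ is precisely $\asymp r^{1+\alpha}$, so the potentially dangerous close-approach region contributes only an $O(1)$ term to the Gronwall integral, and no separate analysis beyond the Kepler-like description in Subsection~\ref{sect-2.1} is needed. The only place where smallness of $\varepsilon$ enters is in the bootstrap for angular momentum, which is exactly the content of the assumption $\rho\ge C\varepsilon$.
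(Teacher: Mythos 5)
Your Gronwall approach is the natural first attempt, and the scaling of $J\nabla^2 a$ and the dyadic bookkeeping are set up correctly, but the final accounting contains a genuine error. Each dyadic scale $|x(s)|\asymp r$ does contribute $O(1)$ to $\int_0^{|t|}|x(s)|^{-1-\alpha}\,ds$ (time $\asymp r^{1+\alpha}$ times integrand $\asymp r^{-1-\alpha}$), but the number of scales traversed is \emph{not} controlled by $\log(1+|t|/r_x^{1+\alpha})$: on the inward leg the trajectory drops from $r_x$ to $r_{\min}\asymp \rho^{1/(1-\alpha)}$ in total time only $\asymp r_x^{1+\alpha}$, yet crosses $\asymp\log(r_x/r_{\min})$ dyadic scales. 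A direct computation confirms
\begin{gather*}
\int_{\text{inward leg}}|x(s)|^{-1-\alpha}\,ds=\int_{r_{\min}}^{r_x}\frac{r^{-1-\alpha}\,dr}{\sqrt{2r^{-2\alpha}-M^2r^{-2}}}\asymp\log\frac{r_x}{r_{\min}},
\end{gather*}
so the Gronwall integral is $\asymp\log(r_x/r_{\min})+\log(1+|t|/r_x^{1+\alpha})$, and the resulting bound is $|D\Phi_t|\lesssim(r_x/r_{\min})^{C}$ rather than the claimed $(1+|t|/r_x^{1+\alpha})^N$. When $\rho$ is a small parameter (as in the applications in Proposition~\ref{prop-3.8}, where $\rho$ is a power of $h$) this extra factor is $\rho^{-C/(1-\alpha)}$, which is far from the stated polynomial bound. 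Your remark that the close-approach region ``contributes only an $O(1)$ term'' accounts only for the single scale $r\asymp r_{\min}$, not for the logarithmically many intermediate scales.

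The deeper issue is that the crude matrix-norm Gronwall bound cannot see the cancellations that make $D\Phi_t$ polynomially bounded for the toy-model flow: the conformal Kepler structure (Subsection~\ref{sect-2.1}) supplies extra conserved quantities, and the linearized radial passage through perihelion is a scattering-type event in which the incoming hyperbolic growth is compensated on the outgoing leg. The paper's one-line proof via~\textup{(\ref{eqn-3.10})} is pointing precisely at this structured radial analysis; the argument must use the near-integrability of the flow, not just a pointwise bound on $\|J\nabla^2 a\|$. Your bootstrap for approximate conservation of $|x\times\xi|$ under the hypothesis $\rho\ge C\varepsilon$ is the right idea and would be a necessary ingredient, but by itself it only guarantees $|x(s)|\ge r_{\min}$, which feeds back into the lossy Gronwall estimate; it does not close the gap.
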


\begin{proof}
It follows easily from 
\begin{gather}
\frac{d\ }{dt} x|x|^{-1} = \sqrt{2} |x|^{-1-\alpha} \Bigl( 1+ O(\rho +\varepsilon)\Bigr).
\label{eqn-3.10}
\end{gather}
\end{proof}

Combining with long-term propagation of singularities\footref{foot-1} we arrive to

\begin{proposition}\label{prop-3.8}
Consider operator \textup{(\ref{eqn-1.2})} under assumptions~\textup{(\ref{eqn-3.1})}--\textup{(\ref{eqn-3.3})} in $B(0,2)$. Then  \begin{enumerate}[wide, label=(\roman*), labelindent=0pt]
\item\label{prop-3.8-i}
Statement of Proposition~\ref{prop-2.6} remains true in the zone $\{(x,\xi)\colon |x\times \xi |\ge \rho\}$ with 
\begin{gather}
\rho \ge h^{1-\delta} + C\varepsilon. 
\label{eqn-3.11}
\end{gather}

\item\label{prop-3.8-ii}
Statement of Remark~\ref{rem-2.7} remains true in the zone $\{(x,\xi)\colon |x\times \xi |\ge \rho\}$ with 
\begin{gather}
\rho \ge h^{\frac{1}{2}-\delta} + C\varepsilon.
\label{eqn-3.12}
\end{gather}
\end{enumerate}
\end{proposition}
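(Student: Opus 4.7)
The plan is to transfer the propagation results from the toy model (Proposition~\ref{prop-2.6} and Remark~\ref{rem-2.7}) to the perturbed setting by verifying that the perturbation error terms remain subordinate to the appropriate microlocal scale in each regime. The input from the perturbation is entirely captured by the commutator bound \textup{(\ref{eqn-3.6})}, so both statements reduce to checking that $\varepsilon$ is small relative to the localization scale $\rho$, together with the scale-dependent lower bounds on $\rho$ imposed by $h$-pseudodifferential calculus.

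For statement \ref{prop-3.8-i}, I would redo the elliptic microlocal argument underlying Proposition~\ref{prop-2.6}, written in spherical coordinates with angular momentum $\vartheta$ as the microlocalization coordinate. For the toy model this worked because $|\vartheta|^2$ commutes with $A$; for the perturbed operator, the commutator is no longer zero but, by \textup{(\ref{eqn-3.6})}, is $O(\varepsilon)$ on the relevant energy shell with $|x|\asymp 1$. Hence $|\vartheta|$ remains an almost-conserved quantity, with error smaller than $\rho/2$ over any bounded time window provided $\rho\ge C\varepsilon$. Combined with Proposition~\ref{prop-3.7}, which supplies the polynomial growth of $D\Phi_t$ that is the hypothesis of the long-range propagation theorem\footref{foot-1}, the standard pseudodifferential construction of the cutoffs at the coarse scale $\rho\ge h^{1-\delta}$ goes through verbatim, giving \textup{(\ref{eqn-2.37})}.

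For statement \ref{prop-3.8-ii}, I would retrace the argument of Remark~\ref{rem-2.7} step by step. The conjugation identity $e^{ih^{-1}tA_h}Q=Q(t)e^{ih^{-1}tA_h}$ rests on solving the Heisenberg equation \textup{(\ref{eqn-2.46})} inside the symbol class \textup{(\ref{eqn-2.44})} with the sharp scale $h^{-\frac{1}{2}(1-\delta')(|\mu|+|\nu|)}$, which is exactly what forces $\rho\ge h^{\frac{1}{2}-\delta}$. For the perturbed operator, the principal symbol of $[A_h,Q(t)]$ differs from the toy-model one by terms whose $\vartheta$-derivatives are $O(\varepsilon)$, so the iteration that constructs $q(t)$ preserves the symbol class provided $\rho\ge C\varepsilon$. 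Moreover, Proposition~\ref{prop-3.7} guarantees that $\sup(q(t))=\Phi_t(\sup q)$ stays inside the zone $\{|x\times\xi|\ge\rho/2\}$ for the relevant times, since the drift of the angular momentum along a trajectory over a slab of length $T_n\le r_n^{1+\alpha}h_n^{-\delta''}$ is bounded by $\varepsilon$ times the scaled time, which is $o(\rho)$ once $\rho\ge C\varepsilon$.

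The main obstacle is the chaining step: the iterative scheme from Remark~\ref{rem-2.7} goes through $N=N(\delta,\delta_5)$ dyadic radii $r_1\asymp r(\rho)$, $r_{n+1}=r_n h_n^{-\delta_4}$, up to $r_N\asymp 1$, and one must verify that at each rescaled stage the perturbation parameter $\varepsilon_n$ (obtained by scaling $\varepsilon$ from $B(0,R)$ down to the $r_n$-annulus) still satisfies $\varepsilon_n\le c\rho_n$, where $\rho_n$ is the rescaled angular-momentum scale. Because $\varepsilon_n$ scales in a favorable direction under the zoom-in $x\mapsto xr_n^{-1}$ (the perturbation is subcritical by hypothesis \textup{(\ref{eqn-1.3})}--\textup{(\ref{eqn-1.5})} with $\beta>0$), this reduces to a bookkeeping exercise once $\rho\ge C\varepsilon$ is imposed at the outset, and the composition formula \textup{(\ref{eqn-2.49})} then yields the full long-range propagation statement claimed in \ref{prop-3.8-ii}.
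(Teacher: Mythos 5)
Your proposal matches the paper's (very terse) argument: the paper simply writes ``Combining with long-term propagation of singularities we arrive to'' Proposition~\ref{prop-3.8}, and you supply exactly the intended fill-in --- the commutator bound \textup{(\ref{eqn-3.6})} makes $|x\times\xi|$ almost-conserved so that $\rho\ge C\varepsilon$ confines the propagation, Proposition~\ref{prop-3.7} furnishes the polynomial $D\Phi_t$ bound required as hypothesis of the long-range theorem, and the chaining through the scales $r_n$ then goes through as in Remark~\ref{rem-2.7}. One small imprecision in your last paragraph: under the normalized hypotheses \textup{(\ref{eqn-3.1})}--\textup{(\ref{eqn-3.3})} the effective $\varepsilon$ is the \emph{same} on every annulus of $B(0,2)$ (the $|x|^\beta$ decay lives in \textup{(\ref{eqn-1.3})}--\textup{(\ref{eqn-1.5})}, which are renormalized to a fixed $\varepsilon$ before Proposition~\ref{prop-3.8} is applied); what makes the chaining favorable is that the rescaled angular-momentum scale $\rho_n=\rho\, r_n^{\alpha-1}$ grows as $r_n$ shrinks, so the ratio $\varepsilon/\rho_n$ decreases and the initial condition $\rho\ge C\varepsilon$ is indeed the binding one --- the conclusion you reach is correct, but via a slightly different mechanism than the one you name.
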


Continuing, we arrive to

\begin{proposition}\label{prop-3.9}
Consider operator \textup{(\ref{eqn-1.2})} under assumptions~\textup{(\ref{eqn-3.1})}--\textup{(\ref{eqn-3.3})} in $B(0,2)$.
Let condition \textup{(\ref{eqn-3.11})} be fulfilled. Then
\begin{enumerate}[wide, label=(\roman*), labelindent=0pt]
\item\label{prop-3.9-i}
Statements of Remark~\ref{rem-2.8} and Proposition~\ref{prop-2.9} remain true.

\item\label{prop-3.9-ii}
Both Statements \ref{prop-2.10-i} and  \ref{prop-2.10-ii}  of Proposition~\ref{prop-2.10} remain true.

\item\label{prop-3.9-iii}
Both Statements \ref{prop-2.11-i} and  \ref{prop-2.11-ii}  of Proposition~\ref{prop-2.11} remain true.

\item\label{prop-3.9-iv}
Statement of Proposition~\ref{prop-2.12} remains true. 
\end{enumerate}
\end{proposition}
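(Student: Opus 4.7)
The plan is to transfer each of the four cited toy-model statements to the perturbed setting by quantifying how the $\varepsilon$-perturbation distorts (a) the Hamiltonian flow in the scaled spherical variables and (b) the loop geometry near a return point. The assumption $\rho\ge h^{1-\delta}+C\varepsilon$ in \textup{(\ref{eqn-3.11})} is precisely what keeps every perturbative correction below the angular-momentum scale $\rho$, so that the conical microsupports used in the toy-model arguments are preserved.

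For (i), I would rerun the scaling-plus-long-range argument that led to \textup{(\ref{eqn-2.49})}, now using the perturbed Hamiltonian flow. The effective semiclassical parameter is still $\hbar=h\rho^{-1}$, and at each scaled annulus $\{|x|\asymp r_n\}$ the symbol perturbation is $O(\varepsilon r_n^\beta)=O(\varepsilon)$, uniformly dominated by $\rho$. Proposition~\ref{prop-3.8}\ref{prop-3.8-ii} together with the flow bound \textup{(\ref{eqn-3.9})} yield the same $N$-fold factorization with cutoffs $Q_n$ whose microsupports are rescaled perturbed-flow images of the previous ones, and \textup{(\ref{eqn-3.11})} still forces $\hbar^{1/2-\delta'}\ll\rho$, so the conical microsupport propagates from jump to jump and the three cases \ref{pf-2.9-i}--\ref{pf-2.9-iii} of Proposition~\ref{prop-2.9} apply unchanged.

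For (ii), I would track the perturbed rotation angle along a trajectory with angular momentum $\rho$ passing near $|\bar x|=1$. Writing the flow in the $(r,\theta)$ variables and comparing with the toy-model formulas \textup{(\ref{eqn-2.11})}--\textup{(\ref{eqn-2.13})}, the difference of rotation angles accumulated between consecutive passes is controlled by the integrated symbol perturbation, which is $O(\varepsilon)$. Under \textup{(\ref{eqn-3.11})} one has $\varepsilon\le C^{-1}\rho$, so the toy-model dichotomy — total rotation $\to\pi(1-\alpha)^{-1}\notin 2\pi\bZ$ versus $\in 2\pi\bZ$ — survives with admissible error, and the loop-geometry estimates \textup{(\ref{eqn-2.55})} and \textup{(\ref{eqn-2.56})} go through.

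For (iii) and (iv), once (i) and (ii) are in place the deductions are essentially microlocal and transfer verbatim: Proposition~\ref{prop-2.11} only checks that the minimum separation in \textup{(\ref{eqn-2.55})}--\textup{(\ref{eqn-2.56})} dominates the microlocal uncertainty $\hbar^{1/2-\delta'}$, an inequality untouched by the perturbation; Proposition~\ref{prop-2.12} is reduced by a partition of unity to the small-$T$ Tauberian bound already supplied by Proposition~\ref{prop-3.2}. The main obstacle will be (ii) in the resonant case $(1-\alpha)^{-1}\in 2\bZ$, where the toy-model branch separation is only $\asymp\rho$ and the $O(\varepsilon)$ correction is borderline; the conformal-image picture behind \textup{(\ref{eqn-2.8})}, which controls the curvature of the returning trajectory at its minimal radius, must be shown to be stable under the perturbation, and here $\varepsilon\le C^{-1}\rho$ (rather than merely $\varepsilon\ll 1$) is exactly what is used.
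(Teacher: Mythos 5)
The paper gives no explicit proof of Proposition~\ref{prop-3.9}; it is asserted to follow by ``continuing'' the chain from Propositions~\ref{prop-3.7} and~\ref{prop-3.8}, which are the paper's only stated ingredients. Your sketch fills in that gap along exactly the lines the paper implies: Proposition~\ref{prop-3.8} supplies the long-range propagation result for the perturbed flow, the flow bound \textup{(\ref{eqn-3.9})} controls $D\Phi_t$ as in Remark~\ref{rem-2.7}, and the condition $\rho\ge C\varepsilon$ extracted from \textup{(\ref{eqn-3.11})} is precisely what keeps the perturbation inside the conical microsupports at every rescaled annulus. You also correctly identify the genuine delicate point, namely part~\ref{prop-3.9-ii} in the resonant case $(1-\alpha)^{-1}\in2\bZ$, where the toy-model branch separation \textup{(\ref{eqn-2.56})} is only $\asymp\rho$; there the $O(\varepsilon)$ displacement of the returning branch must be absorbed, and $\varepsilon\le C^{-1}\rho$ is exactly the margin that does it. Two minor remarks. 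First, the built-in hypotheses of Remark~\ref{rem-2.8} and Proposition~\ref{prop-2.9} already require $\rho\ge h^{1/2-\delta}$, so \textup{(\ref{eqn-3.11})} together with those hypotheses is equivalent to \textup{(\ref{eqn-3.12})}; it would be worth saying this explicitly so it is clear which condition is actually operative in part~\ref{prop-3.9-i}. Second, for the resonant-case stability in part~\ref{prop-3.9-ii} you should note that near the minimal radius $r\asymp\rho^{1/(1-\alpha)}$ the \emph{relative} perturbation is $O(\varepsilon r^\beta)$, hence smaller than $O(\varepsilon)$ rather than merely comparable, which makes the apsidal-angle shift controllable even in the regime where the curvature of the returning branch is sharpest; this closes the loophole you flagged as the ``main obstacle''. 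With these two clarifications your outline is a complete and correct expansion of the paper's implicit argument.
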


 Taking the sum with respect to partition of unity (like in Proposition~\ref{prop-2.13}) we see that  estimate (\ref{eqn-2.61}) holds with $\bar{\rho}$ replaced by $\bar{\rho}_\varepsilon \coloneqq  \bar{\rho} +C \varepsilon$:
 
 \begin{proposition}\label{prop-3.10}
Consider operator \textup{(\ref{eqn-1.2})}.  Assume that  \textup{(\ref{eqn-3.1})}--\textup{(\ref{eqn-3.3})} with small parameter $\varepsilon$ are fulfilled in $B(0,2)$. 
Also assume that the same assumptions albeit with $\varepsilon $ replaced by $\epsilon_2$ which is a small constant are fulfilled in $B(0,R)$ with $R\ge 2$.  Then for  $|x|=1$, 
 \begin{multline}
|F_{t\to h^{-1}\tau}\Bigl(\chi_T(t) u_h (x,x,t) \Bigr)|\le
C h^{\frac{3}{2}-d} + C\bar{\rho} ^{d-1}h^{1-d} + C\varepsilon ^{d-1}h^{1-d}   \\ 
 \forall T\ge \epsilon_0\   \forall \tau \colon |\tau|<\epsilon_2 T^{-2\alpha/(1+\alpha)} 
\label{eqn-3.13} 
\end{multline}
for  $\epsilon_1 \le T\le T^*\coloneqq \epsilon_1R^{1/(1+\alpha)}$  with $\bar{\rho}$ defined by \textup{(\ref{eqn-1.9})}.

Furthermore, as $0<\alpha \le \frac{1}{2}$ one can skip the first term in the right-hand expression of \textup{(\ref{eqn-3.13})}.
 \end{proposition}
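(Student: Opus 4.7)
The plan is to imitate Proposition~\ref{prop-2.13} by a phase-space partition of unity with respect to $|x\times\xi|$ on the energy surface $a(x,\xi)=0$, now invoking the perturbed propagation results of Propositions~\ref{prop-3.7}--\ref{prop-3.9}. First I would fix $|x|=1$, set $\bar\rho_\varepsilon := \bar\rho + C\varepsilon$, and decompose the relevant part of phase space into cut-offs $Q_j = q_j^\w(x,hD)$ supported respectively in a core zone $\Omega_0=\{|x\times\xi|\le \bar\rho_\varepsilon\}$, loop zones $\Omega_{\mathrm{loop}}=\bigsqcup_{k=1}^n \{|x\times\xi|\asymp \mu_k\}$ (only when $\alpha>\tfrac12$), and a complementary regular zone $\Omega_{\mathrm{reg}}$.

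On the core zone, Statement~\ref{prop-3.9-iv} of Proposition~\ref{prop-3.9} (the perturbed version of Proposition~\ref{prop-2.12}) will give a bound $C\bar\rho_\varepsilon^{d-1}h^{1-d}\le C\bar\rho^{d-1}h^{1-d}+C\varepsilon^{d-1}h^{1-d}$, producing the second and third terms of \textup{(\ref{eqn-3.13})}. On $\Omega_{\mathrm{reg}}$, the lower bound $|x\times\xi|\ge \bar\rho_\varepsilon$ matches \textup{(\ref{eqn-3.12})}, so Statement~\ref{prop-3.9-iii} of Proposition~\ref{prop-3.9} (extending Proposition~\ref{prop-2.11}) yields an $O(h^s)$ contribution. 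Each loop zone would then be handled by Proposition~\ref{prop-3.5} (inheriting Statement~\ref{prop-2.4-i} of Proposition~\ref{prop-2.4}) together with the rank statement in Remark~\ref{rem-3.6}; stationary phase in the fiber produces $Ch^{3/2-d}$ per loop, and since there are at most $n$ loops this gives the first term of \textup{(\ref{eqn-3.13})}.

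Two secondary points need checking. First, the time restriction $T\le T^*=\epsilon_1 R^{1/(1+\alpha)}$ is needed to keep every Hamiltonian trajectory issued from $|x|=1$ within $B(0,R)$ for $|t|\le T^*$, so that conditions~\textup{(\ref{eqn-3.1})}--\textup{(\ref{eqn-3.3})} with the small constant $\epsilon_2$ remain available throughout; this follows from $|x(t)|\lesssim t^{1/(1+\alpha)}$ along the radial escape. Second, for $0<\alpha\le\tfrac12$ the loop zone is empty by Statement~\ref{rem-3.6-i} of Remark~\ref{rem-3.6} (extending Statement~\ref{rem-2.5-i} of Remark~\ref{rem-2.5}), which immediately removes the first term.

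The hard part will be the calibration of the cut-off symbols separating $\Omega_0$ from $\Omega_{\mathrm{reg}}$ at the scale $\bar\rho_\varepsilon$: when $\bar\rho$ is as small as $h^{1/2-\delta}$ or $h^{1/3-\delta}$, the symbols barely lie in the admissible semiclassical class, and their composition with the propagator up to time $T^*$ must be tracked carefully. This is exactly the setting of Remark~\ref{rem-2.8} and Statement~\ref{prop-3.8-ii} of Proposition~\ref{prop-3.8}; the choice of $\bar\rho$ in \textup{(\ref{eqn-1.9})} is dictated precisely by this requirement, and the extra $\varepsilon$-shift in $\bar\rho_\varepsilon$ is compatible with \textup{(\ref{eqn-3.11})}--\textup{(\ref{eqn-3.12})} because $\varepsilon\le \epsilon_0$.
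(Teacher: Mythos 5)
Your proposal is correct and matches the paper's approach: the paper's own argument is just ``taking the sum with respect to partition of unity (like in Proposition~\ref{prop-2.13})'' noting that estimate~(\ref{eqn-2.61}) survives with $\bar\rho$ replaced by $\bar\rho_\varepsilon=\bar\rho+C\varepsilon$, with Remark~\ref{rem-3.11} then explaining the two uses of the assumptions (in $B(0,2)$ with parameter $\varepsilon$ for the zone $|x\times\xi|\lesssim\bar\rho_\varepsilon$, in $B(0,R)$ with constant $\epsilon_2$ for outward propagation up to time $T^*$). You have spelled out the same partition in greater detail and correctly identified the two secondary points; the only minor imprecision is pointing to condition~(\ref{eqn-3.12}) for the $O(h^s)$ bound on $\Omega_{\mathrm{reg}}$, whereas the relevant hypothesis is the one inherited by Proposition~\ref{prop-3.9}\ref{prop-3.9-iii} from Proposition~\ref{prop-2.11}, namely $\rho\ge h^{1/2-\delta}$ or $h^{1/3-\delta}$ (plus $C\varepsilon$), which is exactly what $\bar\rho_\varepsilon$ provides.
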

  
\begin{remark}\label{rem-3.11}
Here we need assumptions   (\ref{eqn-3.1})--(\ref{eqn-3.3}) twice: 
\begin{enumerate}[wide, label=(\roman*), labelindent=0pt]
\item\label{rem-3.11-i}
In $B(0,2)$ with small parameter $\varepsilon$ to estimate the 
contribution to the left-hand expression of (\ref{eqn-2.60}) with $\chi_T$ replaced by $\chi^\pm _T$ of zone 
\begin{gather*}
\{\xi\colon |x\times \xi|\le \bar{\rho},\ \pm \langle x,\xi\rangle < 0\}
\end{gather*}
 where singularities propagate first towards $0$, 
\item\label{rem-3.11-ii}
In $B(0,R)$ with the small constant $\epsilon_2$ and $R\gg 1$ to ensure that the singularities propagating away from $0$ will continue this until time $T^*$. 

\item\label{rem-3.11-iii}
If the my conjecture (see Remark~\ref{rem-3.6}\ref{rem-3.6-iii}) is correct, then for generic perturbations with $\varepsilon\ge h$
one can replace the first term  in the right-hand expression of (\ref{eqn-3.13}) by $Ch^{(1-d)/2}  \varepsilon^{1-d/2}$. This improvement will be carried out to estimates (\ref{eqn-3.14}) and (\ref{eqn-1.15}).
\end{enumerate}
\end{remark}

Standard Tauberian arguments imply

\begin{corollary}\label{cor-3.12}
Consider  operator \textup{(\ref{eqn-1.2})}.  Let assumptions of Proposition~\ref{prop-3.10} be fulfilled. 
Then for $x\colon |x|=1$
 \begin{multline}
|e _h(x,x,0) -e^\T_{T_*,h}(x,x,0)| \\ 
\le CT^{*\,-1} h^{1-d}+ C h^{\frac{3}{2}-d} + C\bar{\rho} ^{d-1}h^{1-d} +  C\varepsilon ^{d-1}h^{1-d}
\label{eqn-3.14} 
\end{multline}
with the Tauberian expression \textup{(\ref{eqn-2.66})}, $T^*= \epsilon_1R^{1/(1+\alpha)}$
and arbitrarily small constant $T_*>0$. 

Furthermore, as $0<\alpha \le \frac{1}{2}$ one can skip the second  term in the right-hand expression of \textup{(\ref{eqn-3.14})}.
\end{corollary}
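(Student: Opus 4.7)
The plan is to deduce the bound from Proposition~\ref{prop-3.10} by the same standard Tauberian machinery that was invoked in the proofs of Theorem~\ref{thm-1.1}\ref{thm-1.1-i} and Proposition~\ref{prop-2.15}. First I would split, by the triangle inequality,
\begin{gather*}
|e_h(x,x,0) - e^\T_{T_*,h}(x,x,0)| \le |e_h(x,x,0) - e^\T_{T^*,h}(x,x,0)| + |e^\T_{T^*,h}(x,x,0) - e^\T_{T_*,h}(x,x,0)|,
\end{gather*}
and estimate the two pieces separately. For the first piece I would invoke the trivial uniform bound $|F_{t\to h^{-1}\tau}(\bar\chi_{T^*}(t)u_h(x,x,t))|\le C h^{1-d}$ for $|\tau|\le \epsilon$, which follows from the standard Weyl-type density estimate, together with the standard Tauberian inequality, to obtain $|e_h(x,x,0)-e^\T_{T^*,h}(x,x,0)|\le C T^{*\,-1} h^{1-d}$. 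This accounts for the first term of~(\ref{eqn-3.14}).

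For the second piece I would decompose $\bar\chi_{T^*}-\bar\chi_{T_*}=\sum_T \chi_T$ dyadically over $T\in[T_*,T^*]$, apply Proposition~\ref{prop-3.10} to each summand to obtain
\begin{gather*}
|F_{t\to h^{-1}\tau}(\chi_T(t) u_h(x,x,t))|\le C h^{\frac{3}{2}-d}+C\bar\rho^{d-1}h^{1-d}+C\varepsilon^{d-1}h^{1-d}
\end{gather*}
for $|\tau|\le \epsilon_2 T^{-2\alpha/(1+\alpha)}$, and then convert each such bound into a contribution of order $CT^{-1}$ times the right-hand side via the Tauberian device. Since $T_*$ is a fixed arbitrarily small constant, the resulting dyadic geometric series is dominated by its smallest term and collapses to a constant multiple of the right-hand side, producing the remaining three terms of~(\ref{eqn-3.14}).

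For $0<\alpha\le\frac{1}{2}$ the absence of self-intersecting trajectories allows Proposition~\ref{prop-3.10} to be stated without the $Ch^{\frac{3}{2}-d}$ summand, and this improvement propagates through the dyadic sum without change, yielding the sharpened version of~(\ref{eqn-3.14}). There is no substantive obstacle here — the only point worth checking is that $\tau=0$ lies inside $|\tau|\le \epsilon_2 T^{-2\alpha/(1+\alpha)}$ for every dyadic $T\le T^*$, which is immediate. The proof is thus essentially a bookkeeping combination of the trivial Weyl bound, Proposition~\ref{prop-3.10}, and the Tauberian machinery already used earlier in the paper.
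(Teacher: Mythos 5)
Your proof is correct and fills in exactly what the paper compresses into the single phrase ``Standard Tauberian arguments imply.'' The triangle-inequality split, with the uniform Weyl bound $|F(\bar\chi_{T^*}u_h)|\le Ch^{1-d}$ feeding the standard Tauberian remainder to yield the $CT^{*-1}h^{1-d}$ term, and the dyadic decomposition of $\bar\chi_{T^*}-\bar\chi_{T_*}$ combined with Proposition~\ref{prop-3.10} and the $T^{-1}$ gain from the Tauberian device (so that the geometric sum over dyadic scales collapses to $\asymp T_*^{-1}$, a constant, times the right-hand side of (\ref{eqn-3.13})) to produce the last three terms, is precisely the intended argument; the $\alpha\le\tfrac12$ refinement then carries over verbatim.
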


\begin{proof}[Proof of Theorem~\ref{thm-1.3}\ref{thm-1.3-ii}, \ref{thm-1.3-iv}]
Now standard scaling and setting $\varepsilon = C_0r^\beta$ imply  estimate (\ref{eqn-1.15})
(that is Theorem~\ref{thm-1.3}\ref{thm-1.3-ii}).

Furthermore, we see that as  $0<\alpha \le \frac{1}{2}$ one can skip the first term in the right-hand expression of this estimate (that is, part of Theorem~\ref{thm-1.3}\ref{thm-1.3-iv}).
\end{proof}

\begin{remark}\label{rem-3.13}
We know that for $|x|\asymp 1$ and small enough constant $T_*>0$  and $|x|=1$
\begin{gather}
e^\T_{T_*,h}(x,x,\tau)=e^\W_{h}(x,x,\tau) + O(h^{2-d}+\varepsilon h^{1-d})
\label{eqn-3.15} 
\intertext{with the Weyl expression}
e^\W_{h}(x,x,0) = (2\pi h)^{-d} \varpi_d g(x)^{-\frac{1}{2}}(2(\tau - V(x))^{\frac{d}{2}}, \qquad g =\det (g^{jk}).
\label{eqn-3.16} 
\end{gather}
\end{remark}

Now standard scaling and setting $\varepsilon = C_0r^\beta$ imply  estimate (\ref{eqn-1.18}) 
(that is Remark~\ref{rem-1.4}\ref{rem-1.4-i}).

\section{Perturbations of $e_h(x,y,0)$}
 \label{sect-3.2}

Let us denote unperturbed operator as $A^0_h$ and corresponding spectral function and propagator as $e^0_h(x,y,\tau)$ and $u^0_h(x,y,t)$. 

Consider in the framework of Proposition~\ref{prop-3.10} for $|x|\asymp |y|\asymp 1$
 \begin{multline}
 F_{t\to h^{-1}\tau} \bigl(\bar{\chi}_T(t) u_h(x,y,t)\bigr)=
F_{t\to h^{-1}\tau} \bigl(\bar{\chi}_{T'}(t) u_h(x,y,t)\bigr)\\ 
+  F_{t\to h^{-1}\tau} \bigl((\bar{\chi}_T(t) - \bar{\chi}_{T'}(t))u_h(x,y,t)\bigr)
\label{eqn-3.17}
 \end{multline}
with $T=T^*=C_0$ and $T'=T_*=\epsilon_0$ and corresponding formula for $A^0_h$ and $u^0_h(x,y,t)$. 

Let us integrate both by $\tau$ and multiply by $h^{-1}$.  Then taking the difference we arrive to
 \begin{multline}
 e^\T_{T,h} (x,y,\tau) - e^{0,\T}_{T,h} (x,y,\tau) =
  e^\T_{T',h} (x,y,\tau) - e^{0,\T}_{T',h} (x,y,\tau) \\
 +  h^{-1}\int_{-\infty}^\tau   F_{t\to h^{-1}\tau' } \Bigl((\bar{\chi}_T(t) - \bar{\chi}_{T'}(t))\bigl( u_h(x,y,t)-  u^0_h(x,y,t)\bigr)\Bigr)\,d\tau '.
  \label{eqn-3.18}
  \end{multline}

 \begin{proposition}\label{prop-3.14}
Consider operator \textup{(\ref{eqn-1.2})}.  Assume that  \textup{(\ref{eqn-3.1})}--\textup{(\ref{eqn-3.3})} are fulfilled in $B(0,2)$. 
Then for $|x|=1$ and arbitrarily small constant $T'$
 \begin{multline}
| e^\T_{T',h} (x,y,\tau) - e^{0,\T}_{T',h} (x,y,\tau) -  e^\W_{h} (x,y,\tau) + e^{0,\W}_{h} (x,y,\tau)| \\[3pt]
\le C\varepsilon h^{2-d}+ C'h^s\qquad \forall  \tau \colon |\tau|\le \epsilon.
\label{eqn-3.19}
\end{multline}
\end{proposition}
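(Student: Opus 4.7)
The plan is to combine the method of successive approximations with a short‐time semiclassical parametrix. Set $B_h\coloneqq A_h-A^0_h$; assumptions \textup{(\ref{eqn-3.1})}--\textup{(\ref{eqn-3.3})} guarantee that the full Weyl symbol of $B_h$, together with all required derivatives, is $O(\varepsilon)$ uniformly on the phase‐space region relevant for $|x|=1$. Duhamel's identity, iterated, yields
\begin{gather*}
u_h(t) = u^0_h(t) + u^{(1)}_h(t) + u^{(2)}_h(t) + \ldots,\qquad
u^{(1)}_h(t)=-ih^{-1}\!\int_0^t u^0_h(t-s)B_h\,u^0_h(s)\,ds,
\end{gather*}
where the $n$‐th iterate is $n$‐linear in $B_h$ and microlocally of size $O(\varepsilon^n)$.

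For a sufficiently small constant $T'$ and $|x|=1$, the short‐time parametrix represents each $u^{(n)}_h(x,x,t)$ as an oscillatory integral. Applying the Fourier--Tauberian transformation in $t$ and stationary phase in $\tau$ produces complete asymptotic expansions
\begin{gather*}
e^\T_{T',h}(x,x,\tau)\sim\sum_{j\ge 0}\kappa_j(x,\tau)\,h^{-d+2j},\qquad
e^{0,\T}_{T',h}(x,x,\tau)\sim\sum_{j\ge 0}\kappa^0_j(x,\tau)\,h^{-d+2j},
\end{gather*}
with leading terms $\kappa_0 h^{-d}=e^\W_h$ and $\kappa^0_0 h^{-d}=e^{0,\W}_h$. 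The absence of odd powers of $h$ is the standard consequence of Weyl quantizing a self‐adjoint symbol, and each $\kappa_j$ is a universal polynomial in jets of the symbol of $A_h$ at $x$. Since those jets differ by $O(\varepsilon)$, we conclude $\kappa_j-\kappa^0_j=O(\varepsilon)$ uniformly for $|x|=1$, $|\tau|\le\epsilon$. Subtracting the two expansions,
\begin{gather*}
\bigl[e^\T_{T',h}-e^{0,\T}_{T',h}\bigr]-\bigl[e^\W_h-e^{0,\W}_h\bigr]
=h^{2-d}(\kappa_1-\kappa^0_1)+O(h^{4-d})
=O(\varepsilon h^{2-d}),
\end{gather*}
which is exactly the first term in \textup{(\ref{eqn-3.19})}; the $C'h^s$ tail absorbs the truncation error of the parametrix expansion and the negligible contribution of the cutoff $\bar\chi_{T'}$.

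The main obstacle is the bookkeeping of the subleading coefficients $\kappa_j-\kappa^0_j$: one must verify that the first‐order‐in‐$\varepsilon$ part of the $h^{2-d}$ coefficient depends linearly on the perturbation of the jets of the full symbol (including the $V_j$ contribution), uniformly in the derivatives used. Equivalently, one shows that the first Duhamel iterate $u^{(1)}_h$, after Fourier--Tauberian transformation, reproduces the first‐order variation of the Weyl density up to $O(\varepsilon h^{2-d})$, a symbol‐calculus identity forced by the self‐adjointness of both $A_h$ and $A^0_h$. Higher iterates contribute at worst $O(\varepsilon^2 h^{2-d})$ and are absorbed into the stated bound.
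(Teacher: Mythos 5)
Your proof is correct and takes essentially the same route as the paper: the paper's (very terse) argument is precisely to expand $e^\T_{T',h}(x,x,\tau)\sim\sum_m\varkappa_m(x,\varepsilon,\tau)h^{-d+2m}$ and $\varkappa_m(x,\varepsilon,\tau)\sim\sum_j\kappa_{m,j}(x,\tau)\varepsilon^j$, identify the $m=0$ term with the Weyl expression, and read off that the remaining difference is $O(\varepsilon h^{2-d})+O(h^s)$. Your Duhamel/parametrix discussion and the observation that the coefficients are universal polynomials in the $O(\varepsilon)$-perturbed jets of the symbol merely unpack the standard facts the paper cites implicitly, so no new idea or gap is involved.
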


\begin{proof}
Let $A'_h=\varepsilon ^{-1}(A_h-A^0_h)$. Then $A_h\coloneqq  A_{\varepsilon,h} = A^0_h +\varepsilon A'_h$ and ecomposition
\begin{gather}
 e^\T_{T',h} (x,y,\tau)\sim \sum_{m\ge 0} \varkappa_m(x,\varepsilon, \tau)h^{-d+2m}
 \label{eqn-3.20}
 \end{gather}
 and similar decomposition for  $e^{0\,\T}_{T',h} (x,y,\tau)$ together with
 \begin{gather}
 \varkappa_m(x,\varepsilon, \tau) \sim \sum_{j\ge 0} \kappa_{m,j}(x,\tau)\varepsilon ^j
 \label{eqn-3.21}
 \end{gather}
 imply (\ref{eqn-3.19}). 
\end{proof}

\begin{proposition}\label{prop-3.15}
Consider operator \textup{(\ref{eqn-1.2})}.  Assume that  \textup{(\ref{eqn-3.1})}--\textup{(\ref{eqn-3.3})} with small parameter 
$\varepsilon \le h$ are fulfilled in $B(0,2)$. 
Also assume that the same assumptions albeit with $\varepsilon $ replaced by $\epsilon_2$ which is a small constant are fulfilled in $B(0,R)$ with $R$.  Then for  $|x|=1$, 
 \begin{multline}
 |h^{-1}\int_{-\infty}^\tau   F_{t\to h^{-1}\tau' } \Bigl((\bar{\chi}_T(t) - \bar{\chi}_{T'}(t))\bigl( u_h(x,y,t)-  u^0_h(x,y,t)\bigr)\bigr|_{y=x}\Bigr)\,d\tau'|\\
 \le C\varepsilon h^{\frac{1}{2}-d} + C\varepsilon \bar{\rho}^{d-1}h^{-d} + C'h^s
  \label{eqn-3.22}
 \end{multline}
 for  $\epsilon_1 \le T\le T^*\coloneqq \epsilon_1R^{1/(1+\alpha)}$  with $\bar{\rho}$ defined by \textup{(\ref{eqn-1.9})}.
 
Furthermore, as $0<\alpha \le \frac{1}{2}$ one can skip the first term in the right-hand expression of \textup{(\ref{eqn-3.22})}.
\end{proposition}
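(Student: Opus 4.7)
The plan is to use successive approximations for $u_h - u_h^0$, based on the Duhamel identity
\begin{gather*}
u_h(t) - u_h^0(t) = i\varepsilon h^{-1}\int_0^t u_h^0(t-s)A'_h u_h(s)\,ds,\qquad A'_h := \varepsilon^{-1}(A_h - A^0_h),
\end{gather*}
iterated to produce an expansion $u_h - u_h^0 = \sum_{n\ge 1}(i\varepsilon h^{-1})^n W_n(t)$, where $W_n(t)$ is an $n$-fold iterated integral of copies of $u_h^0$ interleaved with $n$ copies of $A'_h$. Since $\varepsilon \le h$ by assumption, after truncating at order $N$ the remainder is controlled (on the support of $\bar\chi_T - \bar\chi_{T'}$, contained in $|t| \le T^* \asymp 1$) by $C(\varepsilon/h)^{N+1}T^{N+1} \le h^{N+1}$, which can be absorbed into $C'h^s$ by choosing $N$ large.

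The main work is to estimate the $n=1$ term. I would apply $F_{t\to h^{-1}\tau'}$ together with the cutoff $(\bar\chi_T - \bar\chi_{T'})$ to
$i\varepsilon h^{-1}\int_0^t u_h^0(t-s)A'_h u_h^0(s)\,ds$, restricted to $y = x$ with $|x| \asymp 1$, and analyze it by the same microlocal and geometric-optics techniques used in the proof of Proposition~\ref{prop-3.10} for the unperturbed operator. The convolution represents $A^0_h$-Hamiltonian propagation from $x$ to an intermediate point, a scattering insertion by $A'_h$, and propagation back to $x$. Because $A'_h$ is an $h$-pseudodifferential operator in essentially the same symbol class as $A^0_h$ (by \textup{(\ref{eqn-3.1})}--\textup{(\ref{eqn-3.3})} after dividing by $\varepsilon$), it does not destroy the Lagrangian structure of the propagator, and the loop and long-range propagation analysis from Propositions~\ref{prop-3.7}--\ref{prop-3.9} applies unchanged. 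This should yield an $F$-bound of the shape $\varepsilon\bigl(Ch^{3/2-d} + C\bar\rho^{d-1}h^{1-d}\bigr)$, modulo a $C'h^s$ error.

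Applying the standard Tauberian reconstruction $h^{-1}\int_{-\infty}^\tau F\,d\tau'$ then converts this $F$-estimate into the stated bound on the spectral function difference; the extra $h^{-1}$ accounts for the shift from $h^{3/2-d}$ and $h^{1-d}$ in the $F$-bound to $h^{1/2-d}$ and $h^{-d}$ in the spectral estimate. The absence of self-intersecting trajectories when $0 < \alpha \le \tfrac{1}{2}$, already exploited in Propositions~\ref{prop-2.13} and~\ref{prop-3.10}, yields the final sentence of the proposition by eliminating the $h^{3/2-d}$-type loop contribution.

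The main obstacle is the rigorous propagation analysis for the $n=1$ term: one must control loop contributions (distinguishing $(1-\alpha)^{-1}\in 2\bZ$ from $(1-\alpha)^{-1}\notin 2\bZ$) and the near-radial zone $\{|x\times\xi|\le \bar\rho\}$ after the intermediate insertion of $A'_h$. As in Subsection~\ref{sect-2.2}, this requires passing to spherical coordinates and verifying that the scaled, long-range propagation behind Propositions~\ref{prop-3.7}--\ref{prop-3.9} persists when an additional pseudodifferential factor is inserted at an arbitrary intermediate time $s \in [0,t]$; since the weights on $A'_h$ coming from \textup{(\ref{eqn-3.1})}--\textup{(\ref{eqn-3.3})} mirror those of $A^0_h$, the symbol-class bounds needed in Remark~\ref{rem-2.7} remain valid.
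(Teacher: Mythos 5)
Your plan — Duhamel iteration for $u_h-u_h^0$ globally, truncation at order $N$, then microlocal analysis of the $n=1$ term — is a different ordering of the same ingredients as the paper. The paper instead decomposes into microlocal zones \emph{first} (loop zone $|x\times\xi|\ge\epsilon_3$, intermediate zone, near-radial zone $|x\times\xi|\le\bar\rho$) and uses a different perturbative mechanism in each: in the loop zone it writes out the geometric-optics expansion (\ref{eqn-3.23}) for both $u_h$ and $u_h^0$ with phases $\varphi_k(x,\varepsilon,\tau)$ and amplitudes $\varkappa_{k,m}(x,\varepsilon,\tau)$ \emph{analytic in $\varepsilon$}, and gets $C\varepsilon h^{\frac12-d}$ directly from $e^{ih^{-1}\varphi_k(\varepsilon)}-e^{ih^{-1}\varphi_k(0)}=O(\varepsilon h^{-1})$ on top of the leading $h^{\frac32-d}$; in the near-radial zone it applies the standard perturbation (Duhamel) argument, giving $C\varepsilon h^{-1}\cdot\bar\rho^{d-1}h^{1-d}$. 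Your route reproduces the near-radial treatment globally, which is viable but requires more work in the loop zone to recover what the paper gets cheaply from the $\varepsilon$-smoothness of the FIO phase.

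Two concrete flaws in your bookkeeping, though. First, the remainder bound $(\varepsilon/h)^{N+1}T^{N+1}\le h^{N+1}$ does \emph{not} follow from $\varepsilon\le h$ and $T\lesssim1$: you only get $\varepsilon T/h\lesssim1$, so the naive iterated-Duhamel remainder is merely bounded, not $O(h^s)$. To make the truncation work you need the intermediate $s$-integration to gain a power of $h$ (stationary/non-stationary phase at the intermediate propagation time), which is precisely the microlocal content you are deferring; as stated, your absorption of the tail into $C'h^s$ is unjustified. Second, the attribution of the extra $h^{-1}$ to the Tauberian reconstruction $h^{-1}\int F\,d\tau'$ is wrong: that integration is neutral in this framework (compare (\ref{eqn-2.61}) and (\ref{eqn-1.8}), which carry the same powers of $h$). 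The $\varepsilon h^{-1}$ factor comes from the phase perturbation in the loop zone and from the Duhamel prefactor $i\varepsilon h^{-1}$ in the near-radial zone, not from the $\tau'$-integration. Your final answer happens to be right because you under-count the $F$-bound by $h^{-1}$ and over-count the Tauberian step by $h^{-1}$; those compensating errors should be fixed.
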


\begin{proof}
\begin{enumerate}[wide, label=(\roman*), labelindent=0pt]
\item\label{pf-3.15-i}
Let $Q=q^\w(x,hD)$ be an operator with the symbol supported in $\{x\colon  1-\epsilon_3 \le |x|\le 1+\epsilon_3,\ |x\times \xi|\ge \epsilon_3\}$. Then 
\begin{multline}
h^{-1}\int_{-\infty}^\tau   F_{t\to h^{-1}\tau' } \Bigl((\bar{\chi}_T(t) - \bar{\chi}_{T'}(t))  u_h(x,y,t)\,^t\!Q_y\bigr|_{y=x}\Bigr)\,d\tau'\sim\\
\sum _k  \sum_{m\ge 0}  e^{ih^{-1}\varphi _k(x, \varepsilon,\tau)} \varkappa_{k,m} (x, \varepsilon,\tau) h^{\frac{3}{2}-d+m} 
  \label{eqn-3.23}
\end{multline}
and the same is true for $u^0_h(x,y,t)$. Moreover 
\begin{gather}
\varphi _k(x, \varepsilon,\tau)\sim \sum_{j\ge 0} \phi_{k,j} (x,\tau)\varepsilon^j, \qquad
\varkappa _{k,m}(x, \varepsilon,\tau)\sim \sum_{j\ge 0} \kappa_{k,m,j} (x,\tau)\varepsilon^j
  \label{eqn-3.24}
\end{gather}
 and therefore the difference between  expression   (\ref{eqn-3.23}) and (\ref{eqn-3.23}) with $\varepsilon=0$ does not exceed $C\varepsilon h^{\frac{1}{2}-d}+C'h^s$. 
 
 \item\label{pf-3.15-ii}
Let $Q=q^\w(x,hD)$ be an operator with the symbol supported in 
$\{x\colon 1-\epsilon_3 \le |x|\le 1+\epsilon_3,\ \bar{\rho}\le |x\times \xi|\le \epsilon_3\}$. Then the same expressions are $O(h^s)$.

\item\label{pf-3.15-iii}
Finally, let $Q=q^\w(x,hD)$ be an operator with the symbol supported in 
$\{x\colon 1-\epsilon_3 \le |x|\le 1+\epsilon_3 ,\  |x\times \xi|\le \bar{\rho}\}$. Then  from the standard perturbation method\footnote{\label{foot-5} See \cite{monsterbook}.} follows that 
\begin{gather*}
 |h^{-1}\int_{-\infty}^\tau   F_{t\to h^{-1}\tau' } \Bigl((\bar{\chi}_T(t) - \bar{\chi}_{T'}(t))\bigl( u_h(x,y,t)-  u^0_h(x,y,t)\bigr)\,^t\!Q_y\bigr|_{y=x}\Bigr)\,d\tau'|
 \end{gather*}
 does not exceed $C\varepsilon h^{-1} \times  \bar{\rho}^{d-1}h^{1-d}$ which is the second term in the right-hand expression of (\ref{eqn-3.22}).
\end{enumerate}
\end{proof}

\begin{corollary}\label{cor-3.16}
Let $\varepsilon \le h$. Then 
\begin{multline}
 |h^{-1}\int_{-\infty}^\tau   F_{t\to h^{-1}\tau' } \Bigl((\bar{\chi}_T(t) - \bar{\chi}_{T'}(t))\bigl( u_h(x,y,t)-  u^0_h(x,y,t)\bigr)\bigr|_{y=x}\Bigr)\,d\tau'|\le \\
 C\varepsilon h^{2-d} + C\varepsilon h^{\frac{1}{2}-d} + C\varepsilon \bar{\rho}^{d-1}h^{-d}+C'h^s.
 \label{eqn-3.25}
  \end{multline}
 Furthermore, as $0<\alpha \le \frac{1}{2}$ one can skip the second term in the right-hand expression of \textup{(\ref{eqn-3.25})}.
\end{corollary}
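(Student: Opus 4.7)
The plan is to read the corollary as a natural triangle–inequality combination of the two immediately preceding propositions through the splitting identity~(\ref{eqn-3.18}). Concretely, with the target quantity understood as the full Weyl–Tauberian remainder
\[
  e^\T_{T,h}(x,x,\tau)-e^{0,\T}_{T,h}(x,x,\tau)-e^\W_h(x,x,\tau)+e^{0,\W}_h(x,x,\tau),
\]
I would specialise (\ref{eqn-3.18}) to $y=x$ and subtract $e^\W_h(x,x,\tau)-e^{0,\W}_h(x,x,\tau)$ from both sides to obtain
\begin{multline*}
  e^\T_{T,h}-e^{0,\T}_{T,h}-e^\W_h+e^{0,\W}_h
  =\bigl(e^\T_{T',h}-e^{0,\T}_{T',h}-e^\W_h+e^{0,\W}_h\bigr)\\
  +h^{-1}\!\int_{-\infty}^{\tau}\!F_{t\to h^{-1}\tau'}\!\Bigl((\bar\chi_T(t)-\bar\chi_{T'}(t))(u_h-u^0_h)\Bigr)\bigr|_{y=x}\,d\tau'.
\end{multline*}
This is a pure algebraic identity, no analysis enters at this step; the bound (\ref{eqn-3.25}) is precisely the sum of the Proposition~\ref{prop-3.14} bound for the short-time bracket, $C\varepsilon h^{2-d}+C'h^s$, and the Proposition~\ref{prop-3.15} bound for the Fourier--Tauberian integral, $C\varepsilon h^{1/2-d}+C\varepsilon\bar\rho^{d-1}h^{-d}+C'h^s$.

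For the improvement in the subcritical regime $0<\alpha\le\frac12$, I would invoke the final clause of Proposition~\ref{prop-3.15}: claim~(\ref{eqn-2.4}) with $n=0$ tells us that every Hamiltonian trajectory on the energy level $0$ through a point with $|x|=1$ has no self-intersection, so the ``loop'' Fourier integral representation (\ref{eqn-3.23})--(\ref{eqn-3.24}) in Part~(i) of that proof is empty. This is precisely what kills the $C\varepsilon h^{1/2-d}$ summand. Proposition~\ref{prop-3.14} is entirely insensitive to $\alpha$, so the other three terms of (\ref{eqn-3.25}) persist unchanged.

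There is no genuine obstacle inside the corollary itself; the work has already been done in the two preceding propositions, and the corollary is a bookkeeping step. The one detail worth checking is compatibility of hypotheses: both Proposition~\ref{prop-3.14} and Proposition~\ref{prop-3.15} impose (\ref{eqn-3.1})--(\ref{eqn-3.3}) with the same small $\varepsilon$ in $B(0,2)$, so the short-time bracket and the long-time integral are estimated under the same standing assumptions; the larger-radius control in $B(0,R)$ is needed only for Proposition~\ref{prop-3.15} to ensure that singularities travelling outward along Hamiltonian trajectories remain in the region of control up to time $T^*=\epsilon_1 R^{1/(1+\alpha)}$, and this condition transfers verbatim to the corollary.
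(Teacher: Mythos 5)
Your reading is the intended one and the argument is correct. As printed, the left-hand side of (\ref{eqn-3.25}) coincides with that of (\ref{eqn-3.22}), which would make the corollary a trivial weakening of Proposition~\ref{prop-3.15}; but the extra summand $C\varepsilon h^{2-d}$ on the right is exactly the Proposition~\ref{prop-3.14} bound, so the corollary is evidently meant to bound the full remainder
\[
e^\T_{T,h}(x,x,\tau)-e^{0,\T}_{T,h}(x,x,\tau)-e^\W_h(x,x,\tau)+e^{0,\W}_h(x,x,\tau),
\]
obtained by adding and subtracting the Weyl terms in (\ref{eqn-3.18}) and applying the triangle inequality with Propositions~\ref{prop-3.14} and~\ref{prop-3.15}. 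You correctly identify that the $\alpha\le\frac12$ clause removes only the loop term $C\varepsilon h^{\frac12-d}$ via the final clause of Proposition~\ref{prop-3.15} (Proposition~\ref{prop-3.14} is $\alpha$-independent), and that the $B(0,R)$ hypothesis is inherited from Proposition~\ref{prop-3.15} and only constrains $T^*$.
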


\begin{conjecture}\label{conj-3.17}
Estimate (\ref{eqn-3.19}) and (\ref{eqn-3.22}) hold without $C'h^s$ in their right-hand expression.
\end{conjecture}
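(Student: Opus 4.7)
\textbf{Proof plan for Conjecture~\ref{conj-3.17}.} The plan is to refine the successive-approximation argument underlying Propositions~\ref{prop-3.14} and~\ref{prop-3.15} so that the microlocal residues, which are currently absorbed into the $C'h^s$ term, are themselves of order $\varepsilon^N$ for every $N$, hence can be folded into the leading $C\varepsilon$-prefactor. The two sources of $h^s$ losses are: (a) the truncation of the $\varepsilon$-expansion (\ref{eqn-3.21}) of the Tauberian symbols at finite order, and (b) the sharp pseudodifferential cutoffs $Q=q^\w(x,hD)$ used in the three-zone splitting (parts \ref{pf-3.15-i}--\ref{pf-3.15-iii}) in the proof of Proposition~\ref{prop-3.15}, whose complements contribute only modulo $O(h^\infty)$ by elliptic microlocal arguments.

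First I would revisit (\ref{eqn-3.19}). Both $e^\T_{T',h}$ and $e^{0,\T}_{T',h}$ at small constant time $T'$ admit complete Weyl-type expansions
\begin{gather*}
e^\T_{T',h}(x,x,\tau)\sim \sum_{m\ge 0}\varkappa_m(x,\varepsilon,\tau)h^{-d+2m},
\qquad
\varkappa_m(x,\varepsilon,\tau)\sim\sum_{j\ge 0}\kappa_{m,j}(x,\tau)\varepsilon^j,
\end{gather*}
and the corresponding expansion in $\varepsilon=0$ for $e^{0,\T}_{T',h}$. The difference $e^\T_{T',h}-e^{0,\T}_{T',h}-(e^\W_h-e^{0,\W}_h)$ vanishes identically at order $\varepsilon^0$ and at order $h^{-d}$, so its formal symbol starts at $\varepsilon h^{2-d}$. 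The task is to Borel-resum this double series so that the remainder after $N$ terms in $\varepsilon$ is bounded by $C_N\varepsilon^{N+1}h^{2-d}$ rather than $C'h^s+C\varepsilon h^{2-d}$. Since $\varepsilon\le h$ is standing in the regime of interest, picking $N$ with $\varepsilon^N\le h^s$ absorbs the $h^s$ error into the $\varepsilon$-prefactor.

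For (\ref{eqn-3.22}) I would run the analogous program on the long-time contribution. The Duhamel identity $u_h-u^0_h=-ih^{-1}\varepsilon\int_0^t u^0_h(t-s)A'_h u^0_h(s)\,ds+O(\varepsilon^2)$ iterated to order $N$ produces an explicit series whose $j$-th term is an iterated propagator integral scaling like $\varepsilon^j$. In the two regular zones $|x\times\xi|\ge\epsilon_3$ and $\bar\rho\le|x\times\xi|\le\epsilon_3$ the symbol expansions (\ref{eqn-3.23})--(\ref{eqn-3.24}) are already in place; one replaces the sharp cutoffs by a graded partition of unity whose removal costs only $O(\varepsilon^\infty h^{3/2-d})$ at each step. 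In the caustic zone $\{|x\times\xi|\le\bar\rho\}$ the same perturbation method that yielded the $C\varepsilon\bar\rho^{d-1}h^{-d}$ bound can be pushed to arbitrary order in $\varepsilon$, since $A'_h$ is a genuine semiclassical pseudodifferential perturbation with symbol bounds uniform in $\rho$.

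The hard part is the caustic zone together with the long-range piece: the flows $\Phi_t$ for $A_h$ and $A^0_h$ separate at a polynomial rate as $t\to T^*$, and $u_h-u^0_h$ is not a Fourier integral distribution attached to a single Lagrangian. Consequently the Borel-resummation of the $\varepsilon$-series must be performed inside the propagation-of-singularities argument of Proposition~\ref{prop-3.8}, requiring a version of long-range propagation that is \emph{quantitative in $\varepsilon$ at every order}. This is presumably why the author leaves the statement as Conjecture~\ref{conj-3.17}: the microlocal machinery currently available gives $O(h^\infty)$ remainders that are not automatically $O(\varepsilon^\infty)$, and bridging this gap requires a joint $(h,\varepsilon)$-calculus rather than the separate $h$-calculus plus $\varepsilon$-perturbation theory used above.
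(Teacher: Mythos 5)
The target is stated as Conjecture~\ref{conj-3.17}: the paper gives no proof, so there is nothing to compare your attempt against, and your write-up is in fact an analysis of what a proof would require rather than a proof --- you say as much in your closing paragraph. Your diagnosis of where the $C'h^s$ terms come from is correct: in (\ref{eqn-3.19}) it is the truncation error of the double expansions (\ref{eqn-3.20})--(\ref{eqn-3.21}), and in (\ref{eqn-3.22}) it is the elliptic-cutoff residue in the three-zone decomposition of the proof of Proposition~\ref{prop-3.15}. Both are $O(h^\infty)$ by the usual $h$-calculus, but neither carries a visible $\varepsilon$-factor, and you are right that the crux is the caustic zone together with the long-range piece, where $u_h-u^0_h$ is not attached to a single Lagrangian, the flows for $A_h$ and $A^0_h$ separate polynomially in $t$, and the propagation results (Propositions~\ref{prop-3.7}--\ref{prop-3.9}) are not quantitative in $\varepsilon$.

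The gap is that the needed ingredients are only named, not built: the ``graded partition of unity'' with $O(\varepsilon^\infty h^{3/2-d})$ removal cost, the $\varepsilon$-uniform long-range propagation theorem, and the joint $(h,\varepsilon)$-symbol calculus that would make every microlocal remainder $O(\varepsilon^N)$ for all $N$ do not exist in the paper or in \cite{monsterbook}. A minor logical remark: once a remainder of the form $C_N\varepsilon^{N+1}h^{2-d}$ is achieved for any one $N$, it is already $\le C_N\varepsilon h^{2-d}$ since $\varepsilon\le 1$, so the step ``pick $N$ with $\varepsilon^N\le h^s$'' adds nothing; the entire difficulty is getting a single factor of $\varepsilon$ onto the remainder in the first place. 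So your proposal correctly identifies the obstruction but does not remove it, which is consistent with the statement remaining a conjecture.
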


\begin{proof}[Proof of Theorem~\ref{thm-1.3}\ref{thm-1.3-iii}, \ref{thm-1.3-iv}]
Now standard scaling and setting $\varepsilon = C_0r^\beta$ imply  estimate (\ref{eqn-1.17})
(that is Theorem~\ref{thm-1.3}\ref{thm-1.3-iii}).

Furthermore, we see that as  $0<\alpha \le \frac{1}{2}$ one can skip the first term in the right-hand expression of this estimate (that is, remaining part of Theorem~\ref{thm-1.3}\ref{thm-1.3-iv}).
\end{proof}

 \section{Near singularity}
 \label{sect-3.3}

 Now we need to investigate zone $\{x\colon |x|\le   r_* = h^{1/(1-\alpha)-\delta}\}$ which we divide into non-semiclassical subzone
  $\{x\colon |x|\le \bar{r}=h^{1/(1-\alpha)}\}$ and intermediate subzone $\{x\colon \bar{r} \le |x| \le r_*\}$.

 \subsection{Non-semiclassical subzone}
 \label{sect-3.3.1}
 
 \begin{proposition}\label{prop-3.18}
As $|x|\le \bar{r}$, $|\y|\le \bar{r}$ and $\tau \le C_0\bar{r} ^{-2}$
\begin{gather}
|e_h(x,y,0)|\le C\bar{r}^{-d}
\label{eqn-3.26}
\shortintertext{and}
|e_h(x,y,0)- e^0_h(x,y,0)|\le C\bar{r}^{\beta-d}
\label{eqn-3.27}
\end{gather}
\end{proposition}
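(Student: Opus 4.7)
The plan is to exploit the exact self-similarity of the toy model at the critical scale $\bar r = h^{1/(1-\alpha)}$. Under the substitution $x = \bar r x'$, the identity $h^2/\bar r^{2-2\alpha} = 1$ gives
\begin{equation*}
A^0_h = \bar r^{-2\alpha}\,A^0_1,\qquad A^0_1 = -\tfrac{1}{2}\Delta_{x'} - |x'|^{-2\alpha},
\end{equation*}
and correspondingly
\begin{equation*}
e^0_h(x,y,0) = \bar r^{-d}\,e^0_1(x/\bar r,\,y/\bar r,\,0).
\end{equation*}
The hypotheses $|x|,|y|\le\bar r$ translate into $|x'|,|y'|\le 1$, so the whole proposition reduces to $h$-independent bounds on the unit ball for $A^0_1$ and its $C^K$-small perturbation.

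For the first bound I would establish $|e^0_1(x',y',0)|\le C$ uniformly on the unit ball. Since $0<\alpha<1$, the potential $-|x'|^{-2\alpha}$ is infinitesimally $-\Delta$-form-bounded, so $A^0_1$ is self-adjoint and semibounded; the diagonal Tauberian estimate of Section~\ref{sect-2.2} specialised to $h=1$, combined with local elliptic regularity, gives $e^0_1(x',x',0) = O(1)$ uniformly on $\{|x'|\le 1\}$, and Cauchy--Schwarz transfers this to the off-diagonal via $|e^0_1(x',y',0)|^2 \le e^0_1(x',x',0)\,e^0_1(y',y',0)$. For the perturbed operator the same rescaling produces $\bar r^{-2\alpha}(A^0_1 + \bar r^\beta B)$ with $B$ uniformly bounded in $C^K$ on the unit ball by \textup{(\ref{eqn-3.1})}--\textup{(\ref{eqn-3.3})}, and the identical argument yields $|e_h(x,y,0)|\le C\bar r^{-d}$.

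For the difference estimate I would carry out first-order perturbation theory for the spectral projector at fixed energy. With $\tilde A_1 = A^0_1 + \bar r^\beta B$ acting on the unit ball at $h=1$, either a contour representation of the projector or the successive approximations scheme of Subsection~\ref{sect-3.2} specialised to $h=1$ produces
\begin{equation*}
|e_1(x',y',0) - e^0_1(x',y',0)| \le C\bar r^\beta
\end{equation*}
uniformly for $|x'|,|y'|\le 1$. Undoing the scaling converts this into $|e_h(x,y,0) - e^0_h(x,y,0)| \le C\bar r^{\beta - d}$.

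The main obstacle is the non-smoothness of the sharp cut-off at $\tau = 0$: a naive resolvent or Duhamel expansion for the projector difference is only conditionally convergent at the spectral edge. I would handle this by first replacing the sharp cut-off with a Tauberian approximation on a unit time window (the resulting error is harmless at $h=1$) and then applying the successive approximations method, which converges on a unit time interval. The key ancillary fact needed is that the density of states of $A^0_1$ near $\tau = 0$ is locally bounded; for $0<\alpha<1$ this follows from the ODE analysis of Subsection~\ref{sect-2.1} together with the absence of eigenvalues of $A^0_1$ accumulating at $0$ (the essential spectrum is $[0,\infty)$ and the negative spectrum is discrete).
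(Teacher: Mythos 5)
Your rescaling $x=\bar r x'$, which turns $A^0_h$ into $\bar r^{-2\alpha}A^0_1$ with $A^0_1=-\tfrac12\Delta-|x'|^{-2\alpha}$ and reduces everything to $h$-independent bounds on the unit ball, is exactly the right reduction and matches the paper's implicit use of $\hbar=hr^{\alpha-1}\asymp1$. The Cauchy--Schwarz step from diagonal to off-diagonal is also sound. But the crux of the first estimate — that $e^0_1(x',x',0)=O(1)$ uniformly on $\{|x'|\le1\}$ — is precisely what cannot be obtained from ``the diagonal Tauberian estimate of Section~\ref{sect-2.2} specialised to $h=1$.'' Every estimate in Section~\ref{sect-2.2} (Propositions~\ref{prop-2.2}--\ref{prop-2.13}) is a semiclassical statement valid for $h\to0$ at points with $|x|\asymp1$, i.e.\ away from the singularity on the natural scale; after your rescaling the unit ball contains the singularity and $h=1$, which is the one regime those propositions do not address. ``Local elliptic regularity'' does not save this either, because the potential $|x'|^{-2\alpha}$ is not locally bounded near $x'=0$.

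Two further points. The ancillary claim that ``the density of states of $A^0_1$ near $\tau=0$ is locally bounded'' because ``the negative spectrum is discrete'' and does not accumulate is false: for $0<\alpha<1$ the potential decays slower than $|x|^{-2}$, so $A^0_1$ has infinitely many negative eigenvalues accumulating at $0$. (The pointwise bound on $e^0_1(x',x',0)$ can still hold, but not for the reason you give.) The paper's actual proof circumvents all of this by a purely functional-analytic bootstrap: from $\|E_h(\tau)\|\le1$ and $\|A_hE_h(\tau)\|\le|\tau|$ one gets $\|DE_h(\tau)f\|_{L^2}\le C\bar r^{-1}$, then Sobolev embedding gives an $L^p$ bound, then Hölder together with $\||x|^{-2\alpha}\|_{L^s(B(0,r))}\le Cr^{d/s-2\alpha}$ upgrades to $\|D^2E_h(\tau)f\|_{L^q}\le C\bar r^{-2}$, and iterating this until the Sobolev exponent becomes negative yields $\|E_h(\tau)f\|_{L^\infty}\le C\bar r^{-d/2}$, equivalently $\|e_h(x,\cdot,\tau)\|_{L^2}\le C\bar r^{-d/2}$ and hence (\ref{eqn-3.26}). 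The difference estimate (\ref{eqn-3.27}) is obtained by the same bootstrap applied to $E_h(\tau)-E^0_h(\tau)$, seeded by an $L^2$ perturbation bound of size $\varepsilon=\bar r^\beta$. That chain is the real content of the proposition and is absent from your proposal; without it the argument has a genuine hole at the first display $e^0_1(x',x',0)=O(1)$.
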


 \begin{proof}
 \begin{enumerate}[wide, label=(\roman*), labelindent=0pt]
\item\label{pf-3.18-i}
Let $E_h(\tau)$ be a spectral projector of $A_h$. Then $\|E_h(\tau) f\|_{\cL^2} \le \|f\|_{\cL^2}$. 
Then  $\| A_h E_h (\tau)\| \le |\tau| $ implies that  for $\tau \le c\bar{r}^{-2}$, $\hbar\asymp 1$ and $\|f\|_{\sL^2}=1$
\begin{gather}
\|D   E_h (\tau)f\|_{\sL^2} \le C\bar{r}^{-1}
\label{eqn-3.28}
\end{gather}
which in virtue Sobolev inequality  implies that 
\begin{enumerate}[wide, label=(\alph*), labelindent=10pt]
\item\label{pf-3.18-ia}
$d=1\implies \|E_h(\tau)f\|_{\sC}\le C\bar{r}^{-\frac{1}{2}}$, 
\item\label{pf-3.18-ib}
$d\ge 2\implies \|E_h(\tau)f\|_{\sL^p}\le C\bar{r}^{-1}$ with any $p<\infty$ for $d=2$ and $p=2d/(d-2)$ for $d\ge 3$.
\end{enumerate}

In the latter case using inequalities $\|V v\|_{\sL^q} \le C\| V\|_{\sL^s} \|v\|_{\sL^p}$ as $q^{-1}=s^{-1}+p^{-1}$ and $\|V\|_{\sL^s(B(0,r)} \le Cr^{d/s -2\alpha}$ as  $d/s > 2\alpha$ we upgrade (\ref{eqn-3.28}) to 
\begin{gather}
\|D ^2  E_h (\tau)f\|_{\sL^q} \le C\bar{r}^{-2}
\label{eqn-3.29}
\end{gather}
which in virtue Sobolev inequality  implies that  
\begin{enumerate}[wide, label=(\alph*), labelindent=10pt]
\setcounter{enumii}{2}
\item\label{pf-3.18-ic}
$\|E_h(\tau)f\|_{\sC}\le C\bar{r}^{-d/2}$ as \underline{either} $d=2$ \underline{or} $d\ge 3$ and $1/p<0$,
\item\label{pf-3.18-id}
$\|E_h(\tau)f\|_{\sL^{p'}}\le C\bar{r}^{-d/2+d/p'}$ as $d\ge 3$ and  $1/p'>0$ 
\end{enumerate}
with $1/p'=1/p-\delta>0$ with $\delta$ which does not depend on $p$. Repeating these arguments we decrease $1/p$ until it becomes negative and arrive to $\|E_h(\tau)f\|_{\sC}\le C\bar{r}^{-d/2}$ for all $f$ with $\|f\|_{\sL^2}=1$ which is equivalent to
\begin{gather}
\| e_h(x,y,\tau)\|_{\sL^2_y} \le C\bar{r}^{-d/2}\qquad \forall x\in B(0,\bar{r})
\label{eqn-3.30}
\end{gather}
which implies (\ref{eqn-3.27}).
 
\item\label{pf-3.18-ii}
Further, standard perturbation methods imply that  
\begin{gather*}
\|(E_h(\tau) - E^0_h(\tau)) f\|_{\cL^2(B(0,2\bar{r}))} \le  C\varepsilon \|f\|_{\cL^2}\quad \text{as\ \ } \supp (f)\subset B(0,2\bar{r})
\end{gather*}
 with $\varepsilon = \bar{r}^\beta$. 
 
 Then applying standard perturbation arguments\footref{foot-5} and combining with arguments of Part~\ref{pf-3.18-i} of the proof one can prove that
 \begin{enumerate}[wide, label=(\alph*), labelindent=10pt]
\setcounter{enumii}{4}
\item\label{pf-3.18-iie}
$ \|(E_h(\tau)-E^0_h(\tau)\bigr)f\|_{\sC}\le C\bar{r}^{\beta-\frac{1}{2}}$as \underline{either} $d=1$,
\item\label{pf-3.18-iif}
$\|\bigl(E_h(\tau)-E^0_h(\tau)\bigr)f\|_{\sC}\le C\varepsilon \bar{r}^{\beta-d/2}$ as \underline{either} $d=2$ \underline{or} $d\ge 3$ and $1/p<0$,
\item\label{pf-3.18-iig}
$\|\bigl(E_h(\tau)-E^0_h(\tau)\bigr)f\|_{\sL^{p'}}\le C\varepsilon \bar{r}^{\beta-d/2+d/p'}$ as $d\ge 3$ and  $1/p'>0$ 
\end{enumerate}
with $\varepsilon=C\bar{r}^\beta$. 

Repeating these arguments we decrease $1/p$ until it becomes negative and arrive to $\|\bigl(E_h(\tau)-E^0_h(\tau)\bigr)f\|_{\sC}\le C\bar{r}^{-d/2}$ for all $f$ with $\|f\|_{\sL^2}=1$ which is equivalent to
\begin{gather}
\| \bigl( e_h(x,y,\tau)- e^0_h(x,y,\tau)\bigr)\|_{\sL^2_y} \le C\bar{r}^{\beta -d/2}\qquad \forall x\in B(0,\bar{r})
\label{eqn-3.31}
\end{gather}
which together with \ref{eqn-3.30}) for both $e_h(x,y,\tau)$ and  $e^0_h(x,y,\tau)$ implies (\ref{eqn-3.28}).

We leave details to the reader.
\end{enumerate}
\end{proof}

\subsection{Intermediate subzone}
 \label{sect-3.3.2}

 \begin{remark}\label{rem-3.19}
 In the framework of Theorem~\ref{thm-1.3} in the intermediate zone $\{x\colon \bar{r}\le |x|\le r_*\}$
 \begin{enumerate}[wide, label=(\roman*), labelindent=0pt]
 \item\label{rem-3.19-i}
Estimates (\ref{eqn-1.13}) and (\ref{eqn-1.14}) with 
 $ r_*^{1+\alpha}\le  T\le \epsilon_0$ remain true.
 \item \label{rem-3.19-ii}
Estimates 
\begin{gather}
|e^\T _{T,h} (x,x,0)- e^\T _{T'',h} (x,x,0)|\le C h^{-d+s} r^{-\alpha d-(1-\alpha)s}
\label{eqn-3.32}
\end{gather}
and
\begin{multline}
|e^\T _{T,h} (x,x,0)- e^{0\,\T} _{T,h} (x,x,0) - e^\T _{T',h} (x,x,0)+ e^{0\,\T} _{T',h} (x,x,0)|\\
\le C h^{-d+s} r^{-\alpha d-(1-\alpha)s+\beta}
\label{eqn-3.33}
\end{multline}
with $T'=C_0r^{1+\alpha}$ follow from the similar estimates by rescaling.
 \item\label{rem-3.19-iii}
Estimates (\ref{eqn-1.15}) and (\ref{eqn-1.17}) with $T'=\epsilon_0r^{1+\alpha}$ follow from the same estimates by rescaling.
 \item\label{rem-3.19-iv}
 Furthermore, as $0<\alpha \le \frac{1}{2}$ corresponding terms in the right-hand expressions could be skipped.
  \item\label{rem-3.19-v}
  Estimates (\ref{eqn-1.18}) and (\ref{eqn-1.19}) with $T'=\epsilon_0r^{1+\alpha}$  follow from the same estimates by rescaling.
 \end{enumerate}
 \end{remark}

\bibliographystyle{amsplain}

\end{document}